\documentclass[leqno,final]{siamltex}
\usepackage{graphicx} 
\usepackage{amsmath,amstext,amssymb,bm}
\usepackage{leftidx}

\usepackage{xcolor} 
\usepackage{soul} 
\usepackage{tikz}
\usetikzlibrary{shapes,arrows}
\usepackage{mathrsfs}
\usepackage{epstopdf}
\usepackage{color}
\usepackage{multirow}
\usepackage{tabularx}
\usepackage[shortlabels]{enumitem}

\usepackage{colortbl}
\usepackage{booktabs}

\usepackage{subcaption}

\allowdisplaybreaks[3]  

\numberwithin{equation}{section}
\newtheorem{remark}{Remark}[section]

\allowdisplaybreaks[4]

\usepackage{hyperref}
\hypersetup{
	colorlinks=true,   
	linkcolor=blue,    
	citecolor=blue,    
	filecolor=magenta, 
	urlcolor=cyan      
}

\def\u{\mathbf{u}}

\def\n{\textbf{n}}

\begin{document}
	
	\title{A Thermodynamically Consistent Cahn--Hilliard--Navier--Stokes Model for Tumor Growth\thanks{This work was supported by National Natural Science Foundation of China (No. 12471406), Natural Science Foundation of Guangdong Province of China (No. 2024A1515010294) and the Science
			and Technology Commission of Shanghai Municipality (No. 22JC1400900, 22DZ2229014).} 
		}
	\markboth{A Thermodynamically consistent Cahn–Hilliard Navier--Stokes for Tumor Growth}{CHENYANG LI, HUI YU, PING LIN, HAIBIAO ZHENG}

	\author{Chenyang Li
		\thanks{School of Mathematical Sciences, East China Normal University, Shanghai, 200241, China. \texttt{(52275500026@stu.ecnu.edu.cn)}.}
			\and  Ping Lin
		\thanks{Division of Mathematics, University of Dundee, Dundee, DD1 4HN, UK. \texttt{(P.Lin@dundee.ac.uk)} 
		}
			\and Hui Yu
		\thanks{ \textbf{Corresponding author}. School of Mathematics and Physics, University of Science and Technology Beijing, Beijing 100083, China
			\texttt{(h.yu@ustb.edu.cn)}
		}
		\and  Haibiao Zheng
		\thanks{School of Mathematical Sciences, Ministry of Education Key Laboratory of Mathematics and Engineering Applications, Shanghai Key Laboratory of PMMP,  East China Normal University, Shanghai, 200241, China. \texttt{(hbzheng@math.ecnu.edu.cn)} }
	}

	\maketitle
	
	\begin{abstract} 
This work develops a thermodynamically consistent phase-field model for tumor growth based on the energetic variational framework. The model couples Cahn--Hilliard dynamics for tumor and nutrient transport with the incompressible Navier--Stokes equations.
A first-order time discretization scheme based on the Multiple Scalar Auxiliary Variables (MSAV) approach combined with a pressure-correction strategy is proposed to efficiently handle the nonlinear and coupled structure of the system. The proposed scheme is rigorously proven to be unconditionally energy stable and mass--conservative.
Furthermore, optimal first-order temporal error estimates are established for the tumor phase-field variable, the nutrient concentration, and the fluid velocity.
Numerical experiments are finally presented to demonstrate the effectiveness and robustness of the proposed method, as well as to validate the theoretical convergence rates.
	\end{abstract}
	
	\begin{keywords}
Thermodynamically consistent, Tumor growth, Phase-field model, Navier-Stokes,
Multiple scalar auxiliary variables (MSAV),
Error estimates
	\end{keywords}
	
	\begin{AMS}
		65N12, 
		65N15, 
		65N30 
	\end{AMS}
	
\section{Introduction}	\label{introduction}
%
Cancer is one of the most common malignant tumors and remains a leading cause of global morbidity and mortality \cite{ferlay2018}. A major obstacle to its effective detection and treatment lies in the intrinsic complexity of the biological and biochemical mechanisms that govern tumor growth and progression. 
Malignant tumors are characterized by their capacity for local tissue invasion and distant metastasis \cite{enderling2014}. Analyzing the mechanisms that drive this aggressive growth is fundamental to understanding their pathological behavior and clinical fate.


Existing mathematical models of tumor growth are commonly classified according to their treatment of the interface into two main categories: sharp-interface models \cite{roose2007} and diffuse-interface (phase-field) models \cite{xu2020}. While sharp-interface approaches explicitly track the tumor boundary, diffuse-interface methods have emerged as a particularly powerful and flexible framework for modeling complex tumor morphologies and dynamically evolving interfaces. A key advantage of phase-field models lies in their thermodynamically consistent formulation, which naturally accommodates topological changes such as interface merging and splitting without additional geometric reconstruction.
In phase-field models, the tumor boundary is implicitly represented by an order parameter that varies smoothly across a narrow interfacial region, exhibiting a steep but continuous transition between distinct phases. Owing to their favorable analytical and numerical properties—most notably the existence of an energy dissipation law—phase-field approaches have been extensively applied to tumor growth modeling for more than two decades \cite{ebenbeck2019,jiang2015,garcke2018,oden2010,hilhorst2015}.

Several advances have been published regarding the existence and regularity of solutions.
The existence and uniqueness of both weak and strong solutions, along with the existence of a global attractor, are established in \cite{garcke2020}. The singular limit of the problem, which converges to the corresponding sharp-interface model, is characterized in \cite{hilhorst2015}, with a rigorous proof provided recently in \cite{riva2025}. Further related results can be found in \cite{colli2015,colli20152,colli2017,frigeri2015}.

Significant advances have been made in the numerical approximation of tumor growth, both in temporal and spatial discretizations.
\cite{hawkins2012} developed a thermodynamically consistent four-species tumor model within mixture theory, which directly incorporates nutrients to ensure dissipative gradient flow, and presents an energy-stable, mass-conserving numerical scheme that captures complex tumor evolution dynamics. 
\cite{wu2014} proposes a novel second-order, energy-stable numerical scheme for diffuse-interface tumor models, which combines a Crank–Nicolson framework with convex-concave splitting and artificial-diffusivity stabilization to efficiently handle nonlinearities while preserving accuracy and unconditional stability.
The fully-decoupled discontinuous Galerkin method, proposed by  \cite{zou2022} for the Cahn–Hilliard–Brinkman–Ohta–Kawasaki model, enables efficient simulations of complex tumor growth through its novel decoupling strategies. 
While a first-order Fourier-spectral scheme employing the scalar auxiliary variable (SAV) approach is presented in \cite{shen2023}, an error analysis for the scheme is not provided.
\cite{lin2025} proposes structure-preserving Fourier-spectral/SAV schemes under periodic boundary conditions and provides an error analysis for the first-order time semi-discrete scheme that proves optimal temporal convergence.

However, it is noteworthy that many existing models either neglect the interstitial fluid pressure \cite{ribba2006} or treat it as a fixed function or constant \cite{moha2019}, despite physiological evidence highlighting its significant role in tumor prognosis, treatment response, metastasis, and drug delivery \cite{netti2000}. This motivates the development of a more comprehensive mathematical framework that incorporates such critical details--particularly fluid dynamics--to better describe, understand, and predict tumor evolution.
This paper develops a Thermodynamically consistent Cahn--Hilliard Navier--Stokes models for Tumor Growth, which directly incorporates nutrients to ensure dissipative gradient flow, and presents an energy-stable, mass-conserving numerical scheme that captures complex tumor evolution dynamics.

The discretization difficulties for thermodynamically consistent Cahn–Hilliard Navier–Stokes models for Tumor Growth are particularly acute and stem from several intrinsic features:
\begin{itemize}
	\item The Cahn–Hilliard component involves fourth-order spatial derivatives, which induce strong numerical stiffness and typically require mixed formulations or specially designed discretization techniques to ensure stability and accuracy.
	\item The full model constitutes a strongly coupled, nonlinear multiphysics system, in which the phase-field variables, fluid flow, and nutrient transport are tightly coupled through nonlinear source terms, convective interactions, and chemical potential forces, posing significant challenges for the design of efficient and robust numerical schemes.
	\item From a physical and mathematical perspective, it is essential to preserve mass conservation and the thermodynamic energy-dissipation structure at the discrete level; failure to do so may result in nonphysical oscillations or spurious long-time behavior.
	\item Moreover, the presence of nonlinear convective terms and the incompressibility constraint in the Navier–Stokes equations introduces additional challenges, including velocity–pressure coupling that must satisfy appropriate inf–sup stability conditions.
\end{itemize}

Taken together, these challenges strongly motivate the development of novel discretization strategies that achieve a delicate balance between structure preservation (mass and energy), computational efficiency (linear solvers), and rigorous stability and convergence analysis.

%

To tackle these issues, we make the following key contributions:
\begin{itemize}
	\item Derivation of a thermodynamically consistent Cahn–Hilliard–Navier–Stokes tumor growth model using the energetic variational approach (EnVarA).
	\item Development of a structure-preserving numerical scheme by extending the multiple scalar auxiliary variable (MSAV) technique to both the Cahn–Hilliard dynamics and nonlinear convection.
	\item Efficient handling of incompressibility via a projection–correction method, decoupling velocity and pressure.
	\item Design of a first-order time-stepping scheme that is mass-conservative at each step.
	\item Proof of unconditional energy stability via a discrete energy dissipation law that mirrors the continuous thermodynamics.
	\item Establishment of optimal-order convergence under standard regularity assumptions.
\end{itemize}

To the best of our knowledge, this work provides the first thermodynamically consistent Cahn--Hilliard--Navier--Stokes framework for tumor growth that simultaneously incorporates conservative nutrient transport, chemotactic coupling, energetic mass exchange, and the hydrodynamic forces generated by both
chemical potentials.
%

The remainder of this paper is organized as follows. In Section 2, we derive a thermodynamically consistent Cahn–Hilliard–Navier–Stokes model for tumor growth based on the energetic variational framework. Section 3 introduces the required notation and presents the first-order MSAV scheme. Its mass conservation and energy stability are then established.
Section 4 is devoted to the derivation of optimal error estimates for the proposed scheme. In Section 5, we present numerical experiments to demonstrate the accuracy and efficiency of the method. Finally, Section 6 concludes the paper with some remarks and perspectives for future work.

\section{A Thermodynamically Consistent Cahn--Hilliard--Navier--Stokes Model for Tumor Growth}

In this section, we derive the governing equations for the coupled tumor--nutrient--fluid system within the framework of the Energetic Variational Approach (EnVarA), following the methodology developed in \cite{lin2022,lin2025}. Let $\Omega \subset \mathbb{R}^{2}$ be a smooth bounded domain with boundary $\partial\Omega$. We consider a diffuse-interface description of tumor growth in an incompressible viscous fluid. The phase-field variable $\eta$ represents the local volume fraction of tumor cells and characterizes the evolution of the tumor interface. Physically, $\eta$ is interpreted as a tumor volume fraction, with values near one and zero representing tumor and healthy tissue, respectively.
Let $c$ denote the nutrient concentration available for tumor proliferation, such as oxygen or glucose supplied by the surrounding extracellular environment. The fluid velocity is denoted by $\u$, which transports both tumor cells and nutrients through advection. Based on the conservation laws for mass and momentum, the governing kinematic equations are given by
\begin{subequations}\label{tumor-system-1}
	\begin{align}
		\frac{D \eta}{Dt} +\nabla \cdot \mathbb{T}_\eta = \mathbb{S}_\eta,\label{tumor-1}\\
		\frac{D c}{Dt} +\nabla \cdot \mathbb{T}_c = \mathbb{S}_c,\label{tumor-2}\\
		\rho \frac{D\u}{Dt}=\nabla \cdot  \mathbb{T}_\u+\nabla \cdot \sigma_\u,\label{tumor-3}\\
		\nabla \cdot \u =0.\label{tumor-4}
	\end{align}
\end{subequations}
Here, $\mathbb{T}_{\eta}$ and $\mathbb{T}_{c}$ denote the mass fluxes associated with the phase-field variable and the nutrient concentration, respectively. The tensor $\mathbb{T}_{\u}$ represents the viscous stress, while $\sigma_{\u}$ denotes the stress induced by the diffuse interface. The source terms $\mathbb{S}_{\eta}$ and $\mathbb{S}_{c}$ describe nonlinear biochemical reactions, including tumor cell proliferation, apoptosis, nutrient consumption, and other relevant biological processes. In \eqref{tumor-3}, $\rho$ is the constant fluid density and $\u$ denotes the fluid velocity. The incompressibility constraint is imposed through \eqref{tumor-4}. Throughout this work, the material derivative is defined by
$
\frac{Df}{Dt}
=
\frac{\partial f}{\partial t}
+
(\u\cdot\nabla)f.
$
To close the system, we prescribe the following boundary conditions:
$
\mathbb{T}_\eta\cdot \n \big|_{\partial\Omega} = 0,\,
\mathbb{T}_c \cdot \n \big|_{\partial\Omega} = 0,\,
\u \big|_{\partial\Omega}=0.
$

Next, we determine the unknown quantities $\mathbb{T}_\eta$, $\mathbb{T}_c$, $\mathbb{T}_\u$, $\mathbb{S}_\eta,\mathbb{S}_c,\sigma_\u$ by employing the Energetic Variational Approach (EnVarA). 
The total energy of the system consists of the kinetic energy of the fluid, the phase-field mixing energy, the nutrient free energy, and the chemotactic coupling energy \cite{hawkins2012}. These energy components collectively describe the fluid motion, tumor-interface dynamics, nutrient distribution, and chemotactic interactions between tumor cells and nutrients.
\begin{align}\label{tumor-energy-definition}
	E(\eta,c,\u) =& \int_{\Omega} \frac{\rho}{2} |\mathbf{u}|^2 \, dx+\int_{\Omega} \left( \frac{\epsilon^2}{2} |\nabla \eta|^2 + \frac{1}{2\delta} c^2 + g(\eta) + \chi(\eta,c) \right) dx\\
	=&\int_{\Omega} \frac{\rho}{2} |\mathbf{u}|^2 \, dx+\int_{\Omega} \left( \frac{\lambda}{2} \eta^2 + \frac{\epsilon^2}{2} |\nabla \eta|^2 + \frac{1}{2\delta} c^2 + f(\eta) + \chi(\eta,c) \right) dx\notag\\
	:=&E_{fluid}+E_{tumor}.\notag
\end{align}
here, $\epsilon$ is a small parameter related to the thickness of interfacial layers. 
The function  $g(\eta)=\kappa \eta^2(1-\eta)^2$ is the nonlinear free energy density, and $
f(\eta) = g(\eta) - \frac{\lambda}{2} \eta^2,
$
where $\lambda > 0$. The function $\chi(\eta,c)$ represents the chemotaxis energy, and $\delta > 0$ is a small parameter that governs the relative interaction strength between the cells and nutrient species.
\begin{remark}
Note that the term involving $\lambda$ vanishes in the free energy \eqref{tumor-energy-definition}. Consequently, $\lambda$ may be chosen arbitrarily. In the numerical implementation, we set $\lambda = 4\delta \chi_0^2$ to enhance numerical stability. 
This choice is not strictly necessary in practice, and we do not attempt to determine the optimal value of $\lambda$ in the present work. 
A similar stabilization strategy has also been employed in \cite{shenjie2010}.
\end{remark}

With the total energy functional of the system specified, the chemical potentials 
$\mu_\eta$ and $\mu_c$ are obtained via variational differentiation, namely,
\begin{align}
	\mu_\eta &= \frac{\delta E}{\delta \eta} 
	= \lambda \eta - \epsilon^{2} \Delta \eta + f'(\eta)+ \frac{\partial \chi}{\partial \eta}(\eta,c), \\
	\mu_c &= \frac{\delta E}{\delta c} 
	= \delta^{-1} c + \frac{\partial \chi}{\partial c}(\eta,c).
\end{align}

Moreover, 
according to the principle of energy dissipation, the rate of change of the 
total energy is equal to the dissipation of the system. The following total energy dissipation law is satisfied:$	\frac{\mathrm{d}}{\mathrm{d}t} 	E(\eta,c,\u) = -Q\leq 0,$
indicating that the system is thermodynamically consistent.
For the closed tumor growth system considered here,  the total mass of the system is conserved, that is,
$
	\frac{d}{dt} \int_{\Omega} (\eta+c)\,dx = 0.
$
following \cite{hawkins2012}, we impose the condition $\mathbb{S}_\eta = -\mathbb{S}_c$, which ensures that the 
mass conservation property is satisfied.

By directly computing the time derivative of the energy, we have
\begin{align*}
	\frac{\mathrm{d}E_{fluid}}{\mathrm{d}t}
	&= \frac{1}{2} \int_{\Omega} \bigl( \rho_t |\u|^2 + 2 \rho \u \cdot \u_t \bigr) \,\mathrm{d}x \\
	&= \int_{\Omega} \rho \u \cdot (\u_t + \u \cdot \nabla \u) \,\mathrm{d}x 
	+ \frac{1}{2} \int_{\Omega} \bigl( \rho_t |\u|^2 - 2 \rho \u \cdot (\u \cdot \nabla \u) \bigr) \,\mathrm{d}x \\
	&= \int_{\Omega} (-\nabla \u : \mathbb{T}_\u  -\nabla \u : \sigma_\u) \,\mathrm{d}x 
	+ \frac{1}{2} \int_{\Omega} \bigl( \rho_t |\u|^2 + \nabla \cdot (\rho \u) |\u|^2 \bigr) \,\mathrm{d}x \\
	&= \int_{\Omega} (-\nabla \u : \mathbb{T}_\u -\nabla \u : \sigma_\u) \,\mathrm{d}x 
	- \int_{\Omega} p \textbf{I} : \nabla \u \,\mathrm{d}x.
\end{align*}
where pressure $p$ is introduced as a Lagrange multiplier for incompressibility and we have used the mass conservation equation
$
	\partial_t \rho + \nabla \cdot (\rho \mathbf{u}) = 0.
$
\begin{align*}
	\frac{\mathrm{d}E_{tumor}}{\mathrm{d}t} 
	=& \int_{\Omega} \lambda \eta \frac{\partial \eta}{\partial t} \, \mathrm{d}x 
	-\int_{\Omega} \epsilon^2 \Delta \eta \frac{\partial \eta}{\partial t} \, \mathrm{d}x 
	+ \int_{\Omega} \delta^{-1} c \frac{\partial c}{\partial t} \, \mathrm{d}x 
	+ \int_{\Omega} f'(\eta) \frac{\partial \eta}{\partial t} \mathrm{d}x\\ 
	&+ \int_{\Omega} \partial_{\eta}\chi(\eta,c)\frac{\partial \eta}{\partial t} \, \mathrm{d}x +\int_{\Omega} \partial_{c}\chi(\eta,c)\frac{\partial c}{\partial t} \, \mathrm{d}x  \notag\\
	=&\int_{\Omega} \mu_\eta  \frac{D\eta}{D t} \mathrm{d}x 
	+ \int_{\Omega} \mu_c  \frac{Dc}{D t}\mathrm{d}x 
	- \int_{\Omega} \mu_\eta (\mathbf{u} \cdot \nabla \eta) \mathrm{d}x 
	- \int_{\Omega}\mu_c (\mathbf{u} \cdot \nabla c) \mathrm{d}x \nonumber \\
	= & \int_{\Omega} \nabla \mu_\eta \cdot \mathbb{T}_\eta \, dx+\int_{\Omega} \nabla \mu_c \cdot \mathbb{T}_c \, dx
	+ \int_{\Omega} (\mu_\eta - \mu_c) \cdot S_\eta \mathrm{d}x\\
	&-\int_{\partial \Omega}\big( \frac{1}{2\delta} c^2  + \chi(\eta,c)\big) (\u\cdot \n) dS+ \int_{\Omega} \big( \frac{1}{2\delta} c^2  + \chi(\eta,c)\big) (\nabla \cdot \u) dx\\
	& - \int_{\Omega} \big( f'(\eta) - \epsilon^2 \Delta \eta
	+ \lambda \eta \big)\,
	\mathbf{u} \cdot \nabla \eta \, dx \\
	= & \int_{\Omega} \nabla \mu_\eta \cdot \mathbb{T}_\eta \, dx+\int_{\Omega} \nabla \mu_c \cdot \mathbb{T}_c \, dx
	+ \int_{\Omega} (\mu_\eta - \mu_c) \cdot S_\eta \mathrm{d}x\\
	& \quad
	- \int_{\Omega} \mathbf{u} \cdot \nabla
	\Big( f(\eta) + \tfrac{\lambda}{2} \eta^2 \Big) \, dx  + \int_{\Omega} \epsilon^2 \Delta \eta \nabla \eta\cdot \mathbf{u} \, dx \\
	= & \int_{\Omega} \nabla \mu_\eta \cdot \mathbb{T}_\eta \, dx+\int_{\Omega} \nabla \mu_c \cdot \mathbb{T}_c \, dx
	+ \int_{\Omega} (\mu_\eta - \mu_c) \cdot S_\eta \mathrm{d}x \\
	& \quad
	- \int_{\Omega} \mathbf{u} \cdot \nabla
	\Big( f(\eta) + \tfrac{\lambda}{2} \eta^2 
	+ \tfrac{\epsilon^2}{2} |\nabla \eta|^2 \Big) \, dx  + \int_{\Omega} \epsilon^2
	\nabla \cdot (\nabla \eta \otimes \nabla \eta)
	\cdot \mathbf{u} \, dx \\
	=  & \int_{\Omega} \nabla \mu_\eta \cdot \mathbb{T}_\eta \, dx+\int_{\Omega} \nabla \mu_c \cdot \mathbb{T}_c \, dx
	+\int_{\Omega} (\mu_\eta - \mu_c) \cdot S_\eta \mathrm{d}x 
	- \int_{\Omega} \epsilon^2
	(\nabla \eta \otimes \nabla \eta) : \nabla \mathbf{u} \, dx .
\end{align*}

A combination of these two parts yields
\begin{align}
	\frac{\mathrm{d}}{\mathrm{d}t} 	E(\eta,c,\u) =&\int_{\Omega} (-\nabla \u : \mathbb{T}_\u -\nabla \u : \sigma_\u) \,\mathrm{d}x 
	- \int_{\Omega} p\textbf{I} : \nabla \u \,\mathrm{d}x \\
	&
	+ \int_{\Omega}\nabla \mu_\eta \cdot \mathbb{T}_\eta \mathrm{d}x+ \int_{\Omega} \nabla \mu_c \cdot \mathbb{T}_c \mathrm{d}x 
	+ \int_{\Omega} (\mu_\eta - \mu_c) \cdot S_\eta \mathrm{d}x.\notag\\
	&- \int_{\Omega} \epsilon^2
	(\nabla \eta \otimes \nabla \eta) : \nabla \mathbf{u} \, dx.\notag
\end{align}

So the following is a proper choice of mass fluxes and reaction terms to have the rate
of change of the total energy decaying with time:
\begin{align}
	\mathbb{T}_\u &= 2 \mathbb{D}(\mathbf{u}) -p\textbf{I},\\
	\sigma_\u
	&= -\epsilon^2
	(\nabla \eta \otimes \nabla \eta), \\
	\mathbb{T}_\eta &= -\nabla \mu_\eta, \\
	\mathbb{T}_c &= -\nabla \mu_c, \\
	\mathbb{S}_\eta &= -P(\eta) (\mu_\eta - \mu_c).
\end{align}

In particular, the proliferation 
function $P(\eta)$ is assumed to be nonnegative. 
The function $P$ is defined to be
$$
P(\eta) = 
\begin{cases} 
\delta P_0 \eta, & \text{if } \eta \ge 0, \\ 
0, & \text{otherwise}, 
\end{cases}
$$
where $P_0$ is a positive constant denoting the proliferation parameter.
The total dissipation functional of the system (\ref{tumor-system-1}) is composed of the contributions from 
fluid friction, as well as the dissipative effects arising from phase mixing, diffusion, and reaction processes.
\begin{equation*}
	Q = \int_{\Omega} 2\alpha_\u |\mathbb{D}(\u)|^2 \, \mathrm{d}\mathbf{x} +
	\int_{\Omega} \alpha_\eta \, |\nabla \mu_\eta|^2 \, \mathrm{d}x 
	+ \int_{\Omega} \alpha_c \, |\nabla \mu_c|^2 \, \mathrm{d}x 
	+ \int_{\Omega} P(\eta) \cdot (\mu_\eta-\mu_c)^2 \, \mathrm{d}x,
\end{equation*}
where $\alpha_\u, \alpha_\eta$ and $\alpha_c$ denote the fluid viscosity, phase-field mobility, and nutrient mobility, respectively.
We set $\alpha_\u=\alpha_\eta=\alpha_c=1$ without loss of generality.
For the dissipation induced by the reaction mechanism, we assume that the proliferation 
rate of tumor cells depends on their own volume fraction. 

Noticing that \cite{lin2011,lin2014}
\begin{align}
	&\epsilon^2 \nabla \cdot (\nabla \eta \otimes \nabla \eta)\\
	=&	\epsilon^2 \Delta \eta \, \nabla \eta
	+ \tfrac{	\epsilon^2}{2} \nabla |\nabla \eta|^2 \notag\\
	=&- \mu_\eta \nabla \eta-\mu_c \nabla c + \nabla \big( \frac{\lambda}{2} \eta^2+f(\eta)+\chi (\eta,c)+\frac{\delta^{-1}}{2} c^2+\frac{\epsilon^2}{2}|\nabla \eta|^2 \big)\notag
\end{align}

Especially, we choose $\chi(\eta,c)=-\chi_0 \eta c$, and $\tilde{p}=p+ \frac{\lambda}{2} \eta^2+f(\eta)+\chi (\eta,c)+\frac{\delta^{-1}}{2} c^2+\frac{\epsilon^2}{2}|\nabla \eta|^2$, still denote $\tilde{p}$ by $p$. The following PDE system is formulated:
\begin{align}\label{tumor-model}
	\begin{cases}
		&\eta_t + \mathbf{u} \cdot \nabla \eta - \Delta \mu_\eta + P(\eta) (\mu_\eta - \mu_c) = 0,\\
		&\mu_\eta = \lambda \eta - \epsilon^2 \Delta \eta -\chi_0 c + f'(\eta),\\
		&c_t + \mathbf{u} \cdot \nabla c - \Delta \mu_c -P(\eta) (\mu_\eta - \mu_c) = 0,\\
		&\mu_c = \delta^{-1} c -\chi_0 \eta,\\
		&\mathbf{u}_t + \mathbf{u} \cdot \nabla \mathbf{u} - \Delta \mathbf{u} + \nabla p 
		=\mu_\eta \nabla \eta+\mu_c \nabla c,\\
		&\nabla \cdot \mathbf{u} = 0,\\
		&\mathbf{u} \big|_{\partial \Omega}= 0, \quad\frac{\partial \eta}{\partial \n} \big|_{\partial \Omega}= \frac{\partial c}{\partial \n} \big|_{\partial \Omega}= 0,\quad \frac{\partial \mu_\eta}{\partial \n} \big|_{\partial \Omega}= \frac{\partial \mu_c}{\partial \n} \big|_{\partial \Omega}= 0.
	\end{cases}
\end{align}

Based on $	\frac{\mathrm{d}}{\mathrm{d}t} 	E(\eta,c,\u) = -Q\leq 0$, the model
satisfies the following energy dissipation law:
\begin{align*}
	\frac{d}{dt}E(\eta,c,\u) 
	=&- \big(\int_{\Omega} |\nabla\u|^2 \, \mathrm{d}\mathbf{x} +
	\int_{\Omega}   |\nabla \mu_\eta|^2 \, \mathrm{d}x 
	+ \int_{\Omega}  \, |\nabla \mu_c|^2 \, \mathrm{d}x 
	+ \int_{\Omega} P(\eta) \cdot (\mu_\eta-\mu_c)^2 \, \mathrm{d}x\big)
	\notag\\
	\leq &0.\notag
\end{align*}

\section{Preliminaries and The MSAV Formulation}
For $k\in N^+$ and $1\leq p\leq +\infty$, we denote $L^p(\Omega)$ and $W^{k, p}(\Omega)$ as the classical Lebesgue space and Sobolev space, respectively. The norms of these spaces are denoted by 
within this context, $W^{k, 2}(\Omega)$ is also known as a Hilbert space and can be expressed as $H^k(\Omega)$.  $||\cdot||_{L^\infty}$ represents the norm of the space  $L^\infty(\Omega)$ which is defined as $	||u||_{L^\infty(\Omega)}=ess\sup\limits_{x\in \Omega}|u(x)|.$ For simplicity, we denote the inner products of both
$L^2(\Omega)$ namely, $ (u,v)=\int_\varOmega u(x)v(x) d x,  \forall  \, u,v\in L^2(\Omega)$.
%
%

%

By rewriting the product $ab$ as $
ab = \left((2\epsilon)^{1/2} a \right)
\left(\frac{b}{(2\epsilon)^{1/2}}\right)
$
and applying the elementary inequality
$
xy \le \frac{1}{2}x^2 + \frac{1}{2}y^2,
$
we obtain the following well-known inequalities.

\textbf{(Cauchy’s inequality with $\epsilon$).}
For any $a,b \in \mathbb{R}$ and $\epsilon>0$, it holds that
\begin{align}\label{cauchy-inequality}
	ab \leq \epsilon a^2 + \frac{b^2}{4\epsilon}.
\end{align}

We need the following regularity assumptions for the convergence analysis. 

\textbf{A1:} Assume the exact solutions satisfy the following regularities
\begin{subequations}\label{tumor-regularity}
	\begin{align}
		&\u \in L^\infty (0,T; H^2), \quad \u_t\in L^2(0,T;H^1\cap L^2),\quad \u_{tt}\in L^2(0,T;L^2), \\
		&p\in L^2 (0,T; L^2\cap H^1),\quad p_t\in L^2(0,T;H^1), \\
		&\eta \in L^{\infty}(0,T; H^2), \quad \eta_t\in L^2(0,T;H^1\cap L^2),\quad \eta_{tt}\in L^2(0,T;L^2),\\
		&c\in L^{\infty} (0,T;H^2), \quad c_t\in L^2(0,T;H^1\cap L^2),\quad c_{tt}\in L^2(0,T;L^2),\\
        &r_{tt},q_{tt}\in L^2(0,T).
	\end{align}
\end{subequations}
\textbf{A2:}  Considering that tumors grow only with a certain level of nutrition, we assume that  $P(\eta)$  is nonnegative and satisfies the linear growth condition:
\begin{align}\label{tumor-p-assumption}
	0 \le P(\eta) \le C(1 + |\eta|).
\end{align}

Moreover,  $P(\eta)$ is Lipschitz continuous:
\begin{align}
	|P(\eta_1) - P(\eta_2)| \le C|\eta_1 - \eta_2|.
\end{align}

Let $C_1>0$ be a positive constant, $f(\eta)=\kappa\eta^2(1-\eta)^2-\frac{\lambda}{2}\eta^2$ for a positive constant $\kappa$ and $E_1 (\eta)=\int_{\Omega} f(\eta) dx\geq -C_0$ for some $C_0>0$. Let $C_1>C_0$, so that $E_1(\eta)+C_1>0$ and $E_1+C_1$ has a positive lower bound $C^{'}_0=C_1-C_0$, which we still denote as $C_0$. We introduce the following two scalar auxiliary variables:
$	r(t)=\sqrt{E_1 (\eta)+C_1}, \,  C_1 >C_0, \quad
	q(t)=\exp(-\frac{t}{T}),
$
reformulate the system (\ref{tumor-model}) as 
\begin{align}
		&\eta_t + \frac{r}{\sqrt{E_1 (\eta)+C_1}}\mathbf{u} \cdot \nabla \eta - \Delta \mu_\eta +P(\eta) (\mu_\eta - \mu_c) = 0,\label{tumor-continuous1}\\
		&\mu_\eta = \lambda \eta - \epsilon^2 \Delta \eta -\chi_0 c + \frac{r}{\sqrt{E_1 (\eta)+C_1}}f'(\eta),\label{tumor-continuous2}\\
		&r_t=\frac{1}{2\sqrt{E_1 (\eta)+C_1}} \int_{\Omega} f'(\eta)\eta_tdx,\label{tumor-continuous3}\\
		&c_t +\frac{r}{\sqrt{E_1 (\eta)+C_1}} \mathbf{u} \cdot \nabla c - \Delta \mu_c - P(\eta) (\mu_\eta - \mu_c) = 0,\label{tumor-continuous4}\\
		&\mu_c = \delta^{-1} c -\chi_0 \eta,\label{tumor-continuous5}\\
		&\mathbf{u}_t +\exp(\frac{t}{T})q(t) \mathbf{u} \cdot \nabla \mathbf{u} - \Delta \mathbf{u} + \nabla p 
		\notag\\
		&\quad=\frac{r}{\sqrt{E_1 (\eta)+C_1}}\mu_\eta \nabla \eta+\frac{r}{\sqrt{E_1 (\eta)+C_1}}\mu_c \nabla c,\label{tumor-continuous6}\\
		&\nabla \cdot \mathbf{u} = 0,\label{tumor-continuous7}\\
		&q_t=-\frac{1}{T}q+\exp(\frac{t}{T})\int_{\Omega} (\u\cdot \nabla) \u \cdot \u dx. \label{tumor-continuous8}
\end{align}

\begin{remark}
Since
$
\int_{\Omega} (\mathbf{u}\cdot\nabla)\mathbf{u}\cdot \mathbf{u}\,dx = 0,
$
it is straightforward to verify that, by choosing
$
r_0=\sqrt{E_1(\eta|_{t=0})+C_1}, q(0)=1,
$
the reformulated system is equivalent to the original one.

Taking the $L^2$-inner product of \eqref{tumor-continuous1} with $\mu_\eta$,
\eqref{tumor-continuous2} with $\partial_t \eta$,
\eqref{tumor-continuous3} with $2r$,
\eqref{tumor-continuous4} with $\mu_c$,
\eqref{tumor-continuous5} with $\partial_t c$,
\eqref{tumor-continuous6} with $\mathbf{u}$,
and \eqref{tumor-continuous8} with $q$, respectively, and summing up all the resulting identities,
we obtain the following equivalent energy dissipation law:
\begin{align*}
	\frac{d}{dt}\,\tilde{E}(\eta,c,\mathbf{u},q,r)
	= -\|\nabla \mu_\eta\|_{L^2}^2
	-\|\nabla \mu_c\|_{L^2}^2
	-\|\nabla \mathbf{u}\|_{L^2}^2
	-\frac{1}{T}q^2-\|\sqrt{P(\eta)}(\mu_\eta-\mu_c)\|_{L^2}^2.
\end{align*}
Here, the modified energy functional $\tilde{E}$ is defined as
\begin{align*}
	\tilde{E}(\eta,c,\mathbf{u},q,r)
	= \int_{\Omega} \frac{\rho}{2}|\mathbf{u}|^2\,dx
	+ \int_{\Omega} \left(
	\frac{\lambda}{2}\eta^2
	+ \frac{\epsilon^2}{2}|\nabla \eta|^2
	+ \frac{1}{2\delta}c^2
	-\chi_0 \eta c
	\right)dx
	+ r^2+ \frac{1}{2}q^2.
\end{align*}

	The above energy dissipation law indicates that the reformulated system 
	admits a modified dissipation law as the original model, and the total modified energy
	is non-increasing in time. 
\end{remark}

Let $N$ be a positive integer, and let
$
0 = t_0 < t_1 < \cdots < t_N = T
$
be a uniform partition of the time interval $[0,T]$, with time step size
$\tau = \Delta t = T/N$.
For convenience, we introduce the backward difference operator
$
D_\tau v^{n+1} = \frac{v^{n+1} - v^n}{\tau}.
$
In the following, we shall develop an efficient numerical scheme for the system
\eqref{tumor-continuous1}--\eqref{tumor-continuous8}, which is designed to inherit
the above energy dissipation property at the discrete level.

%
%

\textbf{First-order MSAV scheme:}

Find $(\eta^{n+1},\mu_{\eta}^{n+1},c^{n+1},\mu_{c}^{n+1},\tilde{\u}^{n+1},\u^{n+1},p^{n+1},r^{n+1},q^{n+1})$ such that
\begin{align}
	&\frac{\eta^{n+1}-\eta^n}{\tau} +\frac{r^{n+1}}{\sqrt{E_1 (\eta^n)+C_1}}\mathbf{u}^n \cdot \nabla \eta^n - \Delta \mu_\eta^{n+1} +P(\eta^n) (\mu_\eta^{n+1} - \mu_c^{n+1})=0,\label{tumor-algorithm1}\\
		&\mu_\eta^{n+1} = \lambda \eta^{n+1} - \epsilon^2 \Delta \eta^{n+1} -\chi_0 c^{n+1} + \frac{r^{n+1}}{\sqrt{E_1 (\eta^{n})+C_1}}f'(\eta^{n}),\label{tumor-algorithm2}\\
	&\frac{r^{n+1}-r^n}{\tau}=\frac{1}{2\sqrt{E_1 (\eta^{n})+C_1}} \int_{\Omega} f'(\eta^{n})\frac{\eta^{n+1}-\eta^n}{\tau}dx\label{tumor-algorithm3}\\
	&\quad \quad \quad \quad \quad \quad+\frac{1}{2\sqrt{E_1 (\eta^{n})+C_1}}(\mu_\eta^{n+1},\u^n\cdot\nabla \eta^n)\notag \\
	&\quad \quad \quad \quad\quad \quad+\frac{1}{2\sqrt{E_1 (\eta^{n})+C_1}}(\mu_c^{n+1},\u^n\cdot\nabla c^n)\notag\\
	&\quad \quad \quad \quad \quad \quad
	-\frac{1}{2\sqrt{E_1 (\eta^{n})+C_1}}(\tilde{\u}^{n+1},\mu_\eta^n\nabla \eta^n)\notag\\
	&\quad \quad \quad \quad \quad \quad-\frac{1}{2\sqrt{E_1 (\eta^{n})+C_1}}(\tilde{\u}^{n+1},\mu_c^n\nabla c^n),\notag\\
	&\frac{c^{n+1}-c^n}{\tau} +\frac{r^{n+1}}{\sqrt{E_1 (\eta^n)+C_1}} \mathbf{u}^{n} \cdot \nabla c^{n} - \Delta \mu_c^{n+1} - P(\eta^{n}) (\mu_\eta^{n+1} - \mu_c^{n+1}) = 0,\label{tumor-algorithm4}\\
	&\mu_c^{n+1} = \delta^{-1} c^{n+1} -\chi_0 \eta^{n},\label{tumor-algorithm5}\\
	&\frac{\tilde{\u}^{n+1}-\u^{n}}{\tau} +\exp(\frac{t_{n+1}}{T})q^{n+1} \mathbf{u}^{n} \cdot \nabla \mathbf{u}^{n} - \Delta \mathbf{\tilde{\u}}^{n+1} + \nabla p^{n} \label{tumor-algorithm6}\\
	&\quad
	=\frac{r^{n+1}}{\sqrt{E_1 (\eta^{n})+C_1}}\mu_\eta^{n} \nabla \eta^{n}+\frac{r^{n+1}}{\sqrt{E_1 (\eta^{n})+C_1}}\mu_c^{n} \nabla c^{n},\notag\\
	&\frac{\u^{n+1}-\tilde{\u}^{n+1}}{\tau}+\nabla(p^{n+1}-p^{n})=0,\label{tumor-algorithm8}\\
        &\nabla\cdot \u^{n+1}=0,\quad
\u^{n+1}\cdot\mathbf \n|_{\partial\Omega}=0, \quad \tilde{\u}^{n+1}\big|_{\partial\Omega}=0, \quad
\int_\Omega p^{n+1}\,dx=0,\\
	&\frac{q^{n+1}-q^{n}}{\tau}=-\frac{1}{T}q^{n+1}+\exp(\frac{t_{n+1}}{T})\int_{\Omega} (\u^{n}\cdot \nabla ) \u^{n} \cdot \tilde{\u}^{n+1} dx \label{tumor-algorithm9}.
\end{align}

\begin{remark}
	We note that the term
$
	(\mu_\eta^{n+1},\,\mathbf{u}^n\cdot\nabla \eta^n)
	-
	(\tilde{\mathbf{u}}^{n+1},\,\mu_\eta^n\nabla \eta^n)
	$
	and 
	$
	(\mu_c^{n+1},\,\mathbf{u}^n\cdot\nabla c^n)
	-
	(\tilde{\mathbf{u}}^{n+1},\,\mu_c^n\nabla c^n)
	$
	provides a first-order approximation to
	$
	(\mu_\eta,\,\mathbf{u}\cdot\nabla \eta)
	-
	(\mathbf{u},\,\mu_\eta\nabla \eta)=0$ and $	(\mu_c,\,\mathbf{u}\cdot\nabla c)
	-
	(\mathbf{u},\,\mu_c\nabla c)
	= 0.
	$
	The resulting first-order MSAV scheme is a unconditionally energy-stable numerical method for the coupled system.
	By introducing auxiliary variables 
	$(r,q)$ to reformulate the nonlinear terms, the scheme effectively eliminates the need for expensive nonlinear iterations while strictly preserving thermodynamic consistency.
	Moreover, the velocity and pressure variables are efficiently decoupled through a projection method, which significantly reduces the overall computational cost.
The unique solvability of the discrete scheme \eqref{tumor-algorithm1}--\eqref{tumor-algorithm9} follows from the Lax--Milgram theorem. Since the argument is standard, the details are omitted for brevity.
\end{remark}

\subsection{Algorithm Implementation}
To circumvent the computational difficulties associated with directly solving the fully coupled system
\eqref{tumor-algorithm1}--\eqref{tumor-algorithm9},
we introduce auxiliary variables $\xi^{n+1}_1$ and $\xi^{n+1}_2$ to  decompose the original scheme into a sequence of subproblems,
where 
$
	\xi^{n+1}_1=\frac{r^{n+1}}{\sqrt{E_1 (\eta^n)+C_1}},\, \xi^{n+1}_2=\exp(\frac{t_{n+1}}{T})q^{n+1}.
$
Using $\xi^{n+1}_1, \xi^{n+1}_2$, we split $(\eta^{n+1},\mu_{\eta}^{n+1},c^{n+1},\mu_{c}^{n+1},\tilde{\u}^{n+1},\u^{n+1},p^{n+1})$ into the form: $\gamma^{n+1}=\gamma^{n+1}_0 + \xi^{n+1}_1 \gamma_1^{n+1}+\xi^{n+1}_2 \gamma_2^{n+1}.$
Plugging them into the first-order MSAV algorithm (\ref{tumor-algorithm1})--(\ref{tumor-algorithm9}), the decomposition is implemented through the following steps.

\textbf{Step I:} 
Find $(\eta^{n+1}_0,\mu_{\eta 0}^{n+1},c_0^{n+1},\mu_{c0}^{n+1})$ by:
\begin{align}
	\begin{cases}
		\frac{\eta^{n+1}_0-\eta^n}{\tau}-\Delta \mu_{\eta0}^{n+1}+P(\eta^n)(\mu^{n+1}_{\eta0}-\mu^{n+1}_{c0})=0,\\
		\mu_{\eta0}^{n+1}=\lambda \eta^{n+1}_0-\epsilon^2\Delta\eta^{n+1}_0-\chi_0  c_0^{n+1},\\
		\frac{c^{n+1}_0-c^n}{\tau}-\Delta\mu_{c0}^{n+1}-P(\eta^n)(\mu^{n+1}_{\eta0}-\mu^{n+1}_{c0})=0,\\
		\mu^{n+1}_{c0}=\delta^{-1}c^{n+1}_0-\chi_0 \eta^n.
			\end{cases}
\end{align}

Find $(\eta^{n+1}_1,\mu_{\eta 1}^{n+1},c_1^{n+1},\mu_{c1}^{n+1})$ by:
\begin{equation}\label{step2-system}
	\begin{cases}
		\dfrac{\eta_1^{n+1}}{\tau}
		+ \u^n \cdot \nabla \eta^n
		- \Delta \mu_{\eta1}^{n+1}
		+ P(\eta^n)\bigl(\mu_{\eta1}^{n+1} - \mu_{c1}^{n+1}\bigr)
		= 0, \\
		\mu_{\eta1}^{n+1}
		= \lambda \eta_1^{n+1}
		- \epsilon^2 \Delta \eta_1^{n+1}
		- \chi_0 c_1^{n+1}
		+ f'(\eta^n), \\		
		\dfrac{c_1^{n+1}}{\tau}
		+ \u^n \cdot \nabla c^{n}
		- \Delta \mu_{c1}^{n+1}
		- P(\eta^n)\bigl(\mu_{\eta1}^{n+1} - \mu_{c1}^{n+1}\bigr)
		= 0, \\
		\mu_{c1}^{n+1}
		= \delta^{-1} c_1^{n+1}.
	\end{cases}
\end{equation}

Find $(\eta^{n+1}_2,\mu_{\eta 2}^{n+1},c_2^{n+1},\mu_{c2}^{n+1})$ by:
\begin{align}
	\begin{cases}
		\frac{\eta_2^{n+1}}{\tau} - \Delta \mu_{\eta2}^{n+1} + P(\eta^{n}) (\mu_{\eta2}^{n+1} - \mu_{c2}^{n+1}) = 0,\\
		\mu_{\eta2}^{n+1} = \lambda \eta_2^{n+1} - \epsilon^2 \Delta \eta_2^{n+1} - \chi_0 c_2^{n+1},\\
		\frac{c_2^{n+1}}{\tau} - \Delta \mu_{c2}^{n+1} - P(\eta^{n}) (\mu_{\eta2}^{n+1} - \mu_{c2}^{n+1}) = 0,\\
		\mu_{c2}^{n+1} = \delta^{-1} c_2^{n+1}.
	\end{cases}
\end{align}

\textbf{Step II:} 
Find $(\tilde{\u}_0^{n+1},\u_0^{n+1},p_0^{n+1})$ by:
\begin{align}
	\begin{cases}
		\frac{\tilde{\u}^{n+1}_0-\u^n}{\tau}-\Delta \tilde{\u}^{n+1}_0+\nabla p^n=0,\\
		\frac{\u^{n+1}_0-\tilde{\u}^{n+1}_0}{\tau} +\nabla (p^{n+1}_0-p^n)=0.
	\end{cases}
\end{align}

Find $(\tilde{\u}_1^{n+1},\u_1^{n+1},p_1^{n+1})$ by:
\begin{align}
	\begin{cases}
		\dfrac{\tilde{\u}_1^{n+1}}{\tau}
		- \Delta \tilde{\u}_1^{n+1}
		= \mu_\eta^{n} \nabla \eta^{n}+\mu_c^n \nabla c^n, \\
		\frac{\u^{n+1}_1-\tilde{\u}^{n+1}_1}{\tau}
		+ \nabla p_1^{n+1}
		= 0.
	\end{cases}
\end{align}

Find $(\tilde{\u}_2^{n+1},\u_2^{n+1},p_2^{n+1})$ by:
\begin{align}
\begin{cases}
\frac{\tilde{\u}_2^{n+1} }{\tau} + \u^n \cdot \nabla \u^n - \Delta \tilde{\u}_2^{n+1} = 0,\\
\frac{\u_2^{n+1} - \tilde{\u}_2^{n+1}}{\tau} + \nabla p_2^{n+1} = 0.
\end{cases}
\end{align}

\textbf{Step III:} After getting $(\eta^{n+1}_i,\mu_{\eta i}^{n+1},c_i^{n+1},\mu_{ci}^{n+1},\tilde{\u}_i^{n+1},\u_i^{n+1},p_i^{n+1})$, it is essential to determine the auxiliary variables $(\xi^{n+1}_1,\xi^{n+1}_2)$. From (\ref{tumor-algorithm3}) and (\ref{tumor-algorithm9}), we find that  $(\xi^{n+1}_1,\xi^{n+1}_2)$ can be obtained by solving a $2\times2$ linear algebraic system.
\begin{align}
	\begin{cases}
			\alpha_1 \xi^{n+1}_1+\alpha_2 \xi^{n+1}_2=\alpha_0,\\
		\beta_1 \xi^{n+1}_1+\beta_2 \xi^{n+1}_2=\beta_0.
	\end{cases}
\end{align}
where
\begin{align*}
	\begin{cases}
	\alpha_1&=\frac{\sqrt{E_1 (\eta^n) + C_1}}{\tau} - \frac{1}{2\sqrt{E_1 (\eta^n) + C_1}} ( (f'(\eta^n),\frac{\eta_1^{n+1}}{\tau})+ (\mu_{\eta1}^{n+1},\u^n\cdot \nabla \eta^n)- (\tilde{\u}^{n+1}_1,\mu_\eta^n\nabla \eta^n) \notag\\
	&\quad+(\mu_{c1}^{n+1},\u^n\cdot \nabla c^n)- (\tilde{\u}^{n+1}_1,\mu_c^n \nabla c^n)),\\
	\alpha_2&= -\frac{1}{2\sqrt{E_1 (\eta^n) + C_1}} ((f'(\eta^n),\frac{\eta_2^{n+1}}{\tau})+ (\mu_{\eta2}^{n+1},\u^n\cdot \nabla \eta^n)- (\tilde{\u}^{n+1}_2,\mu_\eta^n \nabla \eta^n)\notag\\
	&\quad+(\mu_{c2}^{n+1},\u^n\cdot \nabla c^n)- (\tilde{\u}^{n+1}_2,\mu_c^n \nabla c^n)),\\
	\alpha_0&=  \frac{r^n}{\tau} + \frac{1}{2\sqrt{E_1 (\eta^n) + C_1}} ((f'(\eta^n),\frac{\eta_0^{n+1}-\eta^n}{\tau})+ (\mu_{\eta0}^{n+1},\u^n\cdot \nabla \eta^n)- (\tilde{\u}^{n+1}_0,\mu_\eta^n \nabla \eta^n)\notag\\
	&\quad+(\mu_{c0}^{n+1},\u^n\cdot \nabla c^n)- (\tilde{\u}^{n+1}_0,\mu_c^n \nabla c^n)),\\
	\beta_1&=-\exp\left(\frac{t_{n+1}}{T}\right) \left(\u^n \cdot \nabla \u^n, \tilde{\u}_1^{n+1}\right),\\
	\beta_2&=\left(\frac{1}{\tau}+\frac{1}{T}\right) \exp\left(-\frac{t_{n+1}}{T}\right) - \exp\left(\frac{t_{n+1}}{T}\right) \left(\u^n \cdot \nabla \u^n, \tilde{\u}_2^{n+1}\right),\\
	\beta_0&=\frac{q^n}{\tau} + \exp\left(\frac{t_{n+1}}{T}\right) \left(\u^n \cdot \nabla \u^n, \tilde{\u}^{n+1}_0\right).
		\end{cases}
\end{align*}

\subsection{Mass conservation and energy stability}
The mass-conservation and energy stability of the first-order MSAV algorithm (\ref{tumor-algorithm1})--(\ref{tumor-algorithm9})  is established in the following theorem.
\begin{theorem}
The scheme (\ref{tumor-algorithm1})–(\ref{tumor-algorithm9}) satisfies the mass conservation in the sense that
 	\begin{align}\label{tumor-mass}
 	\int_{\Omega} (\eta^{n+1}+c^{n+1}) dx =\int_{\Omega} (\eta^0+c^0) dx.
 \end{align}
 
Moreover, it is unconditionally energy stable in the sense that
\begin{align}\label{tumor-energy-inequality}
	E^{n+1}\leq E^{n},
\end{align}
where $E^{n+1}=\frac{\lambda}{2} \| \eta^{n+1}\|^2_{L^2}+\frac{\epsilon^2}{2} \| \nabla \eta^{n+1}\|^2_{L^2}+|r^{n+1}|^2+\frac{\delta^{-1}}{2}\|c^{n+1}\|^2_{L^2}+\frac{1}{2}\|\u^{n+1}\|^2_{L^2}+\frac{\tau^2}{2}\|\nabla p^{n+1}\|^2_{L^2}+\frac{1}{2}|q^{n+1}|^2-\chi_0(\eta^{n+1}, c^{n+1}).$
\end{theorem}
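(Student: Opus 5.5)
The mass identity \eqref{tumor-mass} and the energy inequality \eqref{tumor-energy-inequality} are handled separately; the first is a direct integration and the second is the usual SAV energy argument combined with the standard treatment of incremental pressure correction.

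\textbf{Mass conservation.} We integrate \eqref{tumor-algorithm1} and \eqref{tumor-algorithm4} over $\Omega$ (that is, we test both with $1$) and add the results.
\begin{itemize}
\item The reaction terms $\pm P(\eta^n)(\mu_\eta^{n+1}-\mu_c^{n+1})$ cancel exactly.
\item The diffusion terms $\int_\Omega\Delta\mu_\eta^{n+1}\,dx$ and $\int_\Omega\Delta\mu_c^{n+1}\,dx$ vanish by the homogeneous Neumann conditions.
\item For the convective terms, integration by parts gives $\int_\Omega \mathbf{u}^n\cdot\nabla\eta^n\,dx=-\int_\Omega(\nabla\cdot\mathbf{u}^n)\eta^n\,dx+\int_{\partial\Omega}(\mathbf{u}^n\cdot\mathbf{n})\eta^n\,dS=0$. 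This uses $\nabla\cdot\mathbf{u}^n=0$ and $\mathbf{u}^n\cdot\mathbf{n}|_{\partial\Omega}=0$ from the projection step. The term with $c^n$ is identical.
\end{itemize}
Hence $\int_\Omega(\eta^{n+1}+c^{n+1})\,dx=\int_\Omega(\eta^n+c^n)\,dx$, and induction gives \eqref{tumor-mass}.

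\textbf{Energy stability: test functions.} We use the following multipliers:
\begin{itemize}
\item \eqref{tumor-algorithm1} with $\tau\mu_\eta^{n+1}$, and \eqref{tumor-algorithm2} with $\eta^{n+1}-\eta^n$;
\item \eqref{tumor-algorithm3} (after multiplying by $\tau$) with $2r^{n+1}$;
\item \eqref{tumor-algorithm4} with $\tau\mu_c^{n+1}$, and \eqref{tumor-algorithm5} with $c^{n+1}-c^n$;
\item \eqref{tumor-algorithm6} with $\tau\tilde{\mathbf{u}}^{n+1}$, and \eqref{tumor-algorithm9} with $\tau q^{n+1}$.
\end{itemize}
We then sum all the resulting identities.

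\textbf{Quadratic and coupling terms.} The identity $2a(a-b)=a^2-b^2+(a-b)^2$ turns the $\lambda$, $\epsilon^2$, $\delta^{-1}$, $r$ and $q$ terms into differences of the corresponding energy pieces plus nonnegative numerical dissipation. The chemotactic coupling telescopes exactly: $-\chi_0(c^{n+1},\eta^{n+1}-\eta^n)-\chi_0(\eta^n,c^{n+1}-c^n)=-\chi_0(\eta^{n+1},c^{n+1})+\chi_0(\eta^n,c^n)$. This is precisely why $\eta^n$, rather than $\eta^{n+1}$, appears in \eqref{tumor-algorithm5}.

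\textbf{Cancellations through the auxiliary variables.} The factor $r^{n+1}/\sqrt{E_1(\eta^n)+C_1}$ multiplies each of the following scheme terms, and each is cancelled by the matching term of \eqref{tumor-algorithm3} multiplied by $2r^{n+1}$:
\begin{itemize}
\item $(f'(\eta^n),\eta^{n+1}-\eta^n)$;
\item the convective terms $(\mathbf{u}^n\cdot\nabla\eta^n,\mu_\eta^{n+1})$ and $(\mathbf{u}^n\cdot\nabla c^n,\mu_c^{n+1})$;
\item the capillary forces $(\mu_\eta^n\nabla\eta^n+\mu_c^n\nabla c^n,\tilde{\mathbf{u}}^{n+1})$ coming from \eqref{tumor-algorithm6}.
\end{itemize}
In the same way, $e^{t_{n+1}/T}q^{n+1}(\mathbf{u}^n\cdot\nabla\mathbf{u}^n,\tilde{\mathbf{u}}^{n+1})$ cancels against \eqref{tumor-algorithm9}. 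The remaining terms are all dissipative: $\tau\|\nabla\mu_\eta^{n+1}\|^2$, $\tau\|\nabla\mu_c^{n+1}\|^2$, $\tau\|\nabla\tilde{\mathbf{u}}^{n+1}\|^2$, $\frac{\tau}{T}|q^{n+1}|^2$, and $\tau\|\sqrt{P(\eta^n)}(\mu_\eta^{n+1}-\mu_c^{n+1})\|^2\ge0$. The last one is nonnegative by the nonnegativity of $P$ in \textbf{A2}.

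\textbf{Main obstacle: the pressure projection.} Testing \eqref{tumor-algorithm6} produces $\frac12(\|\tilde{\mathbf{u}}^{n+1}\|^2-\|\mathbf{u}^n\|^2+\|\tilde{\mathbf{u}}^{n+1}-\mathbf{u}^n\|^2)$ and the pressure term $\tau(\nabla p^n,\tilde{\mathbf{u}}^{n+1})$. We handle these in three steps.
\begin{enumerate}
\item Testing \eqref{tumor-algorithm8} with $\mathbf{u}^{n+1}$ and using $(\nabla\phi,\mathbf{u}^{n+1})=0$ gives $\|\mathbf{u}^{n+1}\|^2-\|\tilde{\mathbf{u}}^{n+1}\|^2+\|\mathbf{u}^{n+1}-\tilde{\mathbf{u}}^{n+1}\|^2=0$.
\item Substituting $\tilde{\mathbf{u}}^{n+1}=\mathbf{u}^{n+1}+\tau\nabla(p^{n+1}-p^n)$ gives
\[
\tau(\nabla p^n,\tilde{\mathbf{u}}^{n+1})=\frac{\tau^2}{2}\Bigl(\|\nabla p^{n+1}\|^2-\|\nabla p^n\|^2-\|\nabla(p^{n+1}-p^n)\|^2\Bigr).
\]
\item The negative term $-\frac{\tau^2}{2}\|\nabla(p^{n+1}-p^n)\|^2$ is exactly offset by $\frac12\|\mathbf{u}^{n+1}-\tilde{\mathbf{u}}^{n+1}\|^2=\frac{\tau^2}{2}\|\nabla(p^{n+1}-p^n)\|^2$ from step 1.
\end{enumerate}
Collecting everything yields $E^{n+1}-E^n\le0$, with $E^n$ containing the $\frac{\tau^2}{2}\|\nabla p\|^2$ term as in the statement. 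The most error-prone parts are this pressure bookkeeping and checking that every cross term introduced by the auxiliary variables appears in \eqref{tumor-algorithm3} or \eqref{tumor-algorithm9} with the correct sign and time level.
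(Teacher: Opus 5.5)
Your proposal is correct and follows essentially the same route as the paper. You use the same multipliers ($\tau\mu_\eta^{n+1}$, $\eta^{n+1}-\eta^n$, $2\tau r^{n+1}$, $\tau\mu_c^{n+1}$, $c^{n+1}-c^n$, $\tau\tilde{\mathbf{u}}^{n+1}$, $\tau q^{n+1}$), the $r$- and $q$-equations absorb all explicit coupling terms, and the $\chi_0$ term telescopes thanks to $\eta^n$ in \eqref{tumor-algorithm5}. The only cosmetic difference is the pressure step: the paper squares $\mathbf{u}^{n+1}+\tau\nabla p^{n+1}=\tilde{\mathbf{u}}^{n+1}+\tau\nabla p^n$ directly to obtain $\frac12(\|\mathbf{u}^{n+1}\|^2-\|\tilde{\mathbf{u}}^{n+1}\|^2)+\frac{\tau^2}{2}(\|\nabla p^{n+1}\|^2-\|\nabla p^n\|^2)=\tau(\nabla p^n,\tilde{\mathbf{u}}^{n+1})$, which is exactly what your steps 1--3 combine to.
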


\begin{proof}
For the first-order scheme, integrating \eqref{tumor-algorithm1} and \eqref{tumor-algorithm4} over the domain $\Omega$ 
and summing the resulting identities yield \eqref{tumor-mass}.
Taking the inner product of (\ref{tumor-algorithm1}) and (\ref{tumor-algorithm2}) with $\tau \mu_\eta^{n+1}$ and $(\eta^{n+1}-\eta^n)$, we have
\begin{align}\label{tumor-7}
	&\frac{\lambda}{2}(\|\eta^{n+1}\|^2_{L^2}-\|\eta^{n}\|^2_{L^2}+\|\eta^{n+1}-\eta^{n}\|^2_{L^2})-\chi_0  (c^{n+1},\eta^{n+1}-\eta^n)\\
	&\quad+ \frac{\epsilon^2}{2} (\|\nabla \eta^{n+1}\|^2_{L^2}-\|\nabla\eta^{n}\|^2_{L^2}+\|\nabla(\eta^{n+1}-\eta^{n})\|^2_{L^2})\notag\\
	&\quad+\frac{r^{n+1}}{\sqrt{E_1 (\eta^n)+C_1}}( f'(\eta^n),\eta^{n+1}-\eta^n)+ \frac{\tau r^{n+1}}{\sqrt{E_1 (\eta^n)+C_1}}(\u^n\cdot \nabla \eta^n,\mu_{\eta}^{n+1})\notag\\
	&\quad+\tau \| \nabla \mu_{\eta}^{n+1}\|^2_{L^2}+\tau \big( P(\eta^n)(\mu_{\eta}^{n+1}-\mu_c^{n+1}), \mu_{\eta}^{n+1}\big)
	=0.\notag
\end{align}

Testing (\ref{tumor-algorithm3}) by $2\tau r^{n+1}$, we get
\begin{align}\label{tumor-8}
	&( |r^{n+1}|^2-|r^{n}|^2+|r^{n+1}-r^n|^2)\\
	=&\frac{ r^{n+1}}{\sqrt{E_1(\eta^n) + C_1}} (f'(\eta^n), \eta^{n+1} - \eta^n) + \frac{\tau r^{n+1}}{\sqrt{E_1(\eta^n) + C_1}} (\mu_\eta^{n+1}, \u^n \cdot \nabla \eta^n)\notag\\
	&\quad + \frac{\tau r^{n+1}}{\sqrt{E_1(\eta^n) + C_1}} (\mu_c^{n+1}, \u^n \cdot \nabla c^n) - \frac{ \tau r^{n+1}}{\sqrt{E_1(\eta^n) + C_1}} (\tilde{\u}^{n+1}, \mu_\eta^n \nabla \eta^n)\notag\\
	&\quad- \frac{ \tau r^{n+1}}{\sqrt{E_1(\eta^n) + C_1}} (\tilde{\u}^{n+1}, \mu_c^n \nabla c^n).\notag
\end{align}

Taking the inner product of (\ref{tumor-algorithm4}) and (\ref{tumor-algorithm5}) with $\tau \mu_{c}^{n+1}$ and $(c^{n+1}-c^n)$ one has 
\begin{align}\label{tumor-9}
	\frac{\delta^{-1}}{2}(\|c^{n+1}\|_{L^2}^2 - \|c^n\|_{L^2}^2 + \|c^{n+1} - c^n\|_{L^2}^2) - \chi_0 (\eta^n,c^{n+1} -c^n) \\
	+ \frac{\tau r^{n+1}}{\sqrt{E_1 (\eta^n)+C_1}}(\u^n\cdot \nabla c^n,\mu_{c}^{n+1}) + \tau \|\nabla \mu_c^{n+1}\|_{L^2}^2 - \tau\big( P(\eta^n)(\mu_{\eta}^{n+1} - \mu_c^{n+1}), \mu_c^{n+1} \big)= 0.\notag
\end{align}

Testing (\ref{tumor-algorithm6}) by $\tau \tilde{\u}^{n+1}$, we derive
\begin{align}
\frac{1}{2}(	\|\tilde{\u}^{n+1}\|_{L^2}^2 - \|
	\u^n\|_{L^2}^2 + \|\tilde{\u}^{n+1} - \u^n\|_{L^2}^2)+ \tau \exp(\frac{t_{n+1}}{T}) q^{n+1} (\u^n \cdot \nabla \u^n, \tilde{\u}^{n+1}) \notag\\
	+ \tau \|\nabla \tilde{\u}^{n+1}\|_{L^2}^2 + \tau (\nabla p^n, \tilde{\u}^{n+1}) =  \frac{\tau r^{n+1}}{\sqrt{E_1(\eta^n) + C_1}} (\mu_{\eta}^n  \nabla \eta^n, \tilde{\u}^{n+1})\label{tumor-10}\\
	+\quad  \frac{\tau r^{n+1}}{\sqrt{E_1(\eta^n) + C_1}} (\mu_{c}^n  \nabla c^n, \tilde{\u}^{n+1}).\notag
\end{align}

From (\ref{tumor-algorithm8}), we can get
$
		\frac{1}{2}\u^{n+1} +\frac{\tau}{2} \nabla p^{n+1} =\frac{1}{2} \tilde{\u}^{n+1} +\frac{\tau}{2}  \nabla p^n.
$
Taking the inner product of the identity with itself, and noticing that $(\nabla p^{n+1},\u^{n+1})=-(p^{n+1},\nabla \cdot \u^{n+1})=0$, we can deduce 
\begin{align}\label{tumor-11}
\frac{1}{2}(\|\u^{n+1}\|_{L^2}^2 - \|\tilde{\u}^{n+1}\|_{L^2}^2) + \frac{\tau^2}{2} (\|\nabla p^{n+1}\|_{L^2}^2 - \|\nabla p^n\|_{L^2}^2) = \tau (\nabla p^n, \tilde{\u}^{n+1})
\end{align}

Testing (\ref{tumor-algorithm9}) by $\tau q^{n+1}$, we have
\begin{align}
	\frac{1}{2}(|q^{n+1}|^2 - |q^n|^2 + |q^{n+1} - q^n|^2 )+ \frac{\tau}{T} |q^{n+1}|^2\label{tumor-12}\\
	 = \tau \exp\left(\frac{t_{n+1}}{T}\right) q^{n+1} (\u^n \cdot \nabla \u^n, \tilde{\u}^{n+1}).\notag
\end{align}

Summing up (\ref{tumor-7})--(\ref{tumor-12}), we get (\ref{tumor-energy-inequality}).
%
\end{proof}

\section{Error Analysis}
In this section, we perform a rigorous error analysis of the first-order time-discrete scheme \eqref{tumor-algorithm1}--\eqref{tumor-algorithm9}. 
Let $(\eta, \mu_\eta, c, \mu_c, r, \mathbf{u}, p, q)$ denote the exact solution of the continuous system \eqref{tumor-continuous1}--\eqref{tumor-continuous8}, and let $(\eta^{n+1}, \allowbreak\mu_{\eta}^{n+1}, c^{n+1},\allowbreak \mu_{c}^{n+1},\allowbreak \tilde{\mathbf{u}}^{n+1},\allowbreak \mathbf{u}^{n+1}, \allowbreak p^{n+1}, \allowbreak r^{n+1}, q^{n+1})$ be the numerical solution obtained from the discrete scheme \eqref{tumor-algorithm1}--\eqref{tumor-algorithm9}. We define the error terms as $e^{n+1}_{\zeta} = \zeta ^{n+1}-\zeta(t_{n+1})$, where $\zeta$ denotes the primary unknown variable.
We observe that the energy inequalities are insufficient to provide a uniform bound for the time-discrete solutions, due to the presence of the coupling term $2 \chi_0 \eta^{n+1} c^{n+1}$. Consequently, the standard energy argument alone cannot be directly applied. 
To facilitate the convergence analysis, we therefore begin by establishing the following regularity estimates for the time-discrete solutions.
\begin{theorem}
The first-order MSAV schemes (\ref{tumor-algorithm1})–(\ref{tumor-algorithm9}) satisfy the following boundedness results.
\begin{align}\label{tumor-regularity-L2}
		&\frac{\lambda}{2} \| \eta^{n+1}\|^2_{L^2}+\epsilon^2 \| \nabla \eta^{n+1}\|^2_{L^2}+2|r^{n+1}|^2+\frac{\delta^{-1}}{2}\|c^{n+1}\|^2_{L^2}\\
		&+\|\u^{n+1}\|^2_{L^2}+|q^{n+1}|^2+\tau^2\|\nabla p^{n+1}\|^2_{L^2}
		\leq C.\notag
\end{align}
\end{theorem}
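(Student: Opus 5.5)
The plan is to start from the discrete energy law of the previous theorem, which gives $E^{n+1}\le E^n\le\cdots\le E^0$, where $E^0$ is bounded by a constant depending only on the initial data. The difficulty is that $E^{n+1}$ contains the indefinite coupling term $-\chi_0(\eta^{n+1},c^{n+1})$. The remaining terms of $E^{n+1}$ are all nonnegative ($|r^{n+1}|^2\ge 0$ since $r^{n+1}$ is real, and likewise $|q^{n+1}|^2$, $\|\u^{n+1}\|^2$, $\tau^2\|\nabla p^{n+1}\|^2$). So the task reduces to absorbing the cross term into the quadratic parts $\frac{\lambda}{2}\|\eta^{n+1}\|^2$ and $\frac{\delta^{-1}}{2}\|c^{n+1}\|^2$. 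This is exactly where the stabilization choice $\lambda=4\delta\chi_0^2$ from the earlier Remark enters.

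The key step is the Cauchy inequality \eqref{cauchy-inequality} with a weight chosen to split the cross term evenly:
\begin{align*}
\chi_0|(\eta^{n+1},c^{n+1})|\le \chi_0\|\eta^{n+1}\|_{L^2}\|c^{n+1}\|_{L^2}\le \delta\chi_0^2\|\eta^{n+1}\|^2_{L^2}+\frac{1}{4\delta}\|c^{n+1}\|^2_{L^2}.
\end{align*}
Since $\lambda=4\delta\chi_0^2$, we have $\delta\chi_0^2=\lambda/4$, and therefore
\begin{align*}
\frac{\lambda}{2}\|\eta^{n+1}\|^2_{L^2}+\frac{\delta^{-1}}{2}\|c^{n+1}\|^2_{L^2}-\chi_0(\eta^{n+1},c^{n+1})\ge \frac{\lambda}{4}\|\eta^{n+1}\|^2_{L^2}+\frac{\delta^{-1}}{4}\|c^{n+1}\|^2_{L^2}.
\end{align*}
Inserting this lower bound into $E^{n+1}\le E^0$ and multiplying by $2$ yields
\begin{align*}
\frac{\lambda}{2}\|\eta^{n+1}\|^2_{L^2}+\epsilon^2\|\nabla\eta^{n+1}\|^2_{L^2}+2|r^{n+1}|^2+\frac{\delta^{-1}}{2}\|c^{n+1}\|^2_{L^2}+\|\u^{n+1}\|^2_{L^2}+|q^{n+1}|^2+\tau^2\|\nabla p^{n+1}\|^2_{L^2}\le 2E^0,
\end{align*}
which is \eqref{tumor-regularity-L2} with $C=2E^0$. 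Note that the coefficients in the statement are precisely those produced by this factor $2$.

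It remains to check that $E^0$ is bounded independently of $\tau$. This follows from $\eta^0,c^0\in H^1$, $\u^0\in L^2$, $r^0=\sqrt{E_1(\eta^0)+C_1}$ (finite, using $H^1\subset L^4$ in 2D so that $E_1(\eta^0)$ is finite), $q^0=1$, and $p^0$ chosen with $\tau^2\|\nabla p^0\|^2$ bounded, for instance $p^0=p(0)\in H^1$ by (A1).

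The main obstacle is the sign-indefinite coupling term. If $\lambda$ were only assumed to satisfy $\lambda>0$ rather than equal $4\delta\chi_0^2$, I would fall back on choosing a different Cauchy weight. In that case absorption requires $\chi_0^2<\lambda\delta^{-1}$, up to the constants in the statement, so I would state that condition explicitly. An alternative is to use mass conservation \eqref{tumor-mass} together with a Poincar\'e-type argument, controlling $\|\eta\|$ by $\|\nabla\eta\|$ plus the mean, but the mean of $\eta$ alone is not conserved. The cleanest route is therefore the stabilization choice $\lambda=4\delta\chi_0^2$, and I would adopt it.
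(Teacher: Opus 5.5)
Your proposal is correct and follows the paper's proof. Both arguments start from $E^{n+1}\le E^0$ and absorb the sign-indefinite term $-\chi_0(\eta^{n+1},c^{n+1})$ with the Cauchy inequality \eqref{cauchy-inequality} under the choice $\lambda=4\delta\chi_0^2$, which yields exactly the stated coefficients; the only difference is that the paper doubles first and bounds $2\chi_0(\eta^{n+1},c^{n+1})\le 2\delta\chi_0^2\|\eta^{n+1}\|_{L^2}^2+\frac{\delta^{-1}}{2}\|c^{n+1}\|_{L^2}^2$, whereas you bound first and double afterwards (and you also check that $E^0$ is independent of $\tau$, which the paper leaves implicit).
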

\begin{proof}
From (\ref{tumor-energy-inequality}), it follows that
$
E^{n+1} \le E^{0},
$
i.e., the discrete energy is bounded by the initial energy.
\begin{align}
	\begin{aligned}
&\lambda \| \eta^{n+1}\|^2_{L^2}+\epsilon^2 \| \nabla \eta^{n+1}\|^2_{L^2}+2|r^{n+1}|^2+\delta^{-1}\|c^{n+1}\|^2_{L^2}\\
&+\|\u^{n+1}\|_{L^2}+\tau^2\|\nabla p^{n+1}\|^2_{L^2}+|q^{n+1}|^2\\
\leq& 
\lambda \| \eta^{0}\|^2_{L^2}+\epsilon^2 \| \nabla \eta^{0}\|^2_{L^2}+2|r^{0}|^2+\delta^{-1}\|c^{0}\|^2_{L^2}\\
&+\|\u^{0}\|_{L^2}+\tau^2\|\nabla p^{0}\|^2_{L^2}+|q^{0}|^2-2\chi_0(\eta^{0}, c^{0})+2\chi_0(\eta^{n+1}, c^{n+1})
\end{aligned}
\end{align}

Using (\ref{cauchy-inequality}) and noting that $\lambda = 4\delta \chi_0^2$, we obtain
\begin{align}
	2\chi_0(\eta^{n+1},c^{n+1})
	\le2 \delta \chi_0^2 \|\eta^{n+1}\|_{L^2}^2
	+ \frac{\delta^{-1}}{2} \|c^{n+1}\|_{L^2}^2.
\end{align}
Thus, we arrive at (\ref{tumor-regularity-L2}).
\end{proof}

Furthermore, since an $L^2$-bound for $\nabla \mathbf{u}^{n+1}$ cannot be obtained directly at this stage of the analysis and regularity assumptions on $\eta$, $c$ are essential for the subsequent error analysis. we adopt the induction hypothesis
\begin{align}\label{tumor-eta-h2-induction}
\|\nabla\mathbf u^i\|_{L^2}
+\|\Delta\eta^i\|_{L^2}
+\|\Delta c^i\|_{L^2} \leq C,\quad 0 \leq  i \leq n.
\end{align}
thus, from (\ref{tumor-p-assumption}) we can deduce that
\begin{align}\label{tumor-p-eta-infty}
\|P(\eta^n)\|_{\infty}\leq C(1+\|\eta^n\|_{\infty})\leq C.
\end{align}

The corresponding estimates for $\|\nabla \mathbf{u}^{n+1}\|_{L^2}$, $\|\Delta \eta^{n+1}\|_{L^2}$ and $\|\Delta c^{n+1}\|_{L^2}$ will be established by mathematical induction; see \eqref{tumor-u-h1} and Theorem~\ref{tumor-eta-h2-theorem}. 
The main results in this paper are stated in the following theorem:
\begin{theorem}
\label{tumor-main-theorem}
	Under the regularity assumptions \eqref{tumor-regularity}, let $(\eta, \mu_\eta, c, \mu_c,\mathbf{u}, p, r,q)$ denote the exact solution of the continuous system \eqref{tumor-continuous1}--\eqref{tumor-continuous8}, and let $(\eta^{n+1}, \allowbreak\mu_{\eta}^{n+1}, c^{n+1},\allowbreak \mu_{c}^{n+1},\allowbreak \mathbf{u}^{n+1}, \allowbreak p^{n+1}, \allowbreak r^{n+1}, q^{n+1})$ be the numerical solution obtained from the discrete scheme \eqref{tumor-algorithm1}--\eqref{tumor-algorithm9}. Then, the following error estimate holds:
\begin{align}
		& \|e_{\eta}^{N}\|_{L^2}^2 + \|\nabla e_{\eta}^{N}\|_{L^2}^2 +|e^{N}_r|^2
		+ \| e_c^{N} \|_{L^2}^2 +\| e^{N}_\u\|^2_{L^2}
		+|e_{q}^{N}|^{2}  +\tau^2 \|\nabla e_{p}^{N}\|_{L^2}^{2} \notag\\
		&+\tau\sum_{n=0}^{N-1} \big( \delta^{-1} \| \nabla e_c^{n+1} \|_{L^2}^2 + \| e_{\mu c}^{n+1} \|_{L^2}^2+  \| \nabla e_{\mu c}^{n+1} \|_{L^2}^2+ \|e_{\mu \eta}^{n+1}\|_{L^2}^2  \label{tumor-main-results}\\
		&+ \|\nabla e_{\mu \eta}^{n+1}\|_{L^2}^2+\| \nabla \tilde{e}^{n+1}_\u\|^2_{L^2}\big)
		\leq C  \tau^2.\notag
\end{align}
	for all $0 \leq n \leq N-1$, where $C$ is a constant independent of $\tau$.
\end{theorem}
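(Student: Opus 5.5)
The estimate \eqref{tumor-main-results} will follow from an energy argument on the error equations, mirroring the proof of the energy stability theorem, together with an induction on $n$ that also re-establishes the hypothesis \eqref{tumor-eta-h2-induction} at level $n+1$.

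\textbf{Step 1: error equations.} Evaluate \eqref{tumor-continuous1}--\eqref{tumor-continuous8} at $t_{n+1}$ and subtract them from \eqref{tumor-algorithm1}--\eqref{tumor-algorithm9}. The truncation errors are $R^{n+1}_\zeta=D_\tau\zeta(t_{n+1})-\zeta_t(t_{n+1})$ for $\zeta=\eta,c,\u,r,q$. In addition, the lagging produces consistency residuals such as $\u(t_{n+1})\cdot\nabla\eta(t_{n+1})-\u(t_n)\cdot\nabla\eta(t_n)$, $f'(\eta(t_{n+1}))-f'(\eta(t_n))$ and $P(\eta(t_{n+1}))-P(\eta(t_n))$. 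We also need the extra terms in \eqref{tumor-algorithm3}, which approximate zero at first order, and the pressure term $\nabla(p(t_{n+1})-p(t_n))$. By \textbf{A1} each of these has $\tau\sum_n\|R^{n+1}\|^2\le C\tau^2$. We introduce an auxiliary error $\tilde e^{n+1}_\u=\tilde\u^{n+1}-\u(t_{n+1})$ for the projection step.

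\textbf{Step 2: test the error equations.} We use the same multipliers as in the stability proof:
\begin{itemize}
\item the $\eta$-error equation by $\tau e_{\mu\eta}^{n+1}$ and the $\mu_\eta$-error equation by $e_\eta^{n+1}-e_\eta^n$;
\item the $c$-equations by $\tau e_{\mu c}^{n+1}$ and $e_c^{n+1}-e_c^n$;
\item the $r$-equation by $2\tau e_r^{n+1}$;
\item the momentum equation by $\tau\tilde e_\u^{n+1}$;
\item the projection step by squaring, using $(\nabla e_p^{n+1},e_\u^{n+1})=0$;
\item the $q$-equation by $\tau e_q^{n+1}$.
\end{itemize}
The SAV structure makes the leading nonlinear couplings cancel exactly as in \eqref{tumor-7}--\eqref{tumor-12}. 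The terms containing the exact factor $r(t_{n+1})/\sqrt{E_1(\eta(t_{n+1}))+C_1}$, which equals $1$, are written as differences against the discrete ratio.

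\textbf{Step 3: control the leftover terms.} The remaining terms are bounded with Cauchy's inequality \eqref{cauchy-inequality}, the uniform bounds of \eqref{tumor-regularity-L2}, the induction hypothesis \eqref{tumor-eta-h2-induction} with its consequence \eqref{tumor-p-eta-infty}, and $H^2\hookrightarrow L^\infty$ in two dimensions.

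\begin{itemize}
\item \emph{Ratio errors.} We use $\big|\sqrt{E_1(\eta^n)+C_1}-\sqrt{E_1(\eta(t_n))+C_1}\big|\le C\|e^n_\eta\|_{H^1}$, valid because $f'$ is cubic and $\eta^n$ is bounded in $H^1$.
\item \emph{Reaction terms.} The Lipschitz bound on $P$ gives $|((P(\eta^n)-P(\eta(t_n)))(\mu_\eta-\mu_c),e_{\mu\eta}^{n+1}-e_{\mu c}^{n+1})|\le C\|e^n_\eta\|_{L^2}^2+\tfrac14\|e^{n+1}_{\mu\eta}-e^{n+1}_{\mu c}\|^2$. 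The difference $\|e_{\mu\eta}-e_{\mu c}\|^2$ is controlled through the dissipative term $\tau(P(\eta^n)(e_{\mu\eta}-e_{\mu c}),e_{\mu\eta}-e_{\mu c})\ge0$ only in a weighted sense. We therefore recover $\|e_{\mu\eta}^{n+1}\|_{L^2}$ and $\|e_{\mu c}^{n+1}\|_{L^2}$ separately: for $e_{\mu\eta}^{n+1}$, test the $\mu_\eta$-error equation by $e_{\mu\eta}^{n+1}$ and the $\eta$-error equation by $-\epsilon^2\Delta e^{n+1}_\eta$-type multipliers, combined with Poincaré--Wirtinger using mass conservation \eqref{tumor-mass}; for $e_{\mu c}^{n+1}$, use $e_{\mu c}^{n+1}=\delta^{-1}e_c^{n+1}-\chi_0e_\eta^n$ directly.
\item \emph{Coupling term.} The indefinite term $\chi_0(e_\eta,e_c)$ is absorbed exactly as in the boundedness theorem, thanks to $\lambda=4\delta\chi_0^2$.
\item \emph{Convective terms.} For $\u^n\cdot\nabla\eta^n$ and $\u^n\cdot\nabla\u^n$ we write, for example, $\u^n\cdot\nabla\u^n-\u(t_n)\cdot\nabla\u(t_n)=e^n_\u\cdot\nabla\u^n+\u(t_n)\cdot\nabla e^n_\u$. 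Using the bound on $\|\nabla\u^n\|$ from the induction, Ladyzhenskaya's inequality, and Young's inequality, we absorb $\tfrac12\tau\|\nabla\tilde e^{n+1}_\u\|^2$.
\end{itemize}
Summing over $n$ and applying the discrete Gronwall lemma then gives \eqref{tumor-main-results} up to level $n+1$.

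\textbf{Step 4: close the induction.} From the $O(\tau)$ error bound we have
\[
\|\nabla\tilde e_\u^{n+1}\|\le C\tau^{1/2},\qquad \|\nabla(\eta^{n+1}-\eta(t_{n+1}))\|\le C\tau,
\]
and, from the elliptic equation for $\mu_\eta$, $\|\Delta e^{n+1}_\eta\|\le C(\|e_{\mu\eta}^{n+1}\|+\dots)$. We pass from $\tilde\u$ to $\u$ through the projection, using the $H^1$-stability of the Leray projector on the smooth domain. Adding the exact-solution bounds from \textbf{A1} shows that $\|\nabla\u^{n+1}\|+\|\Delta\eta^{n+1}\|+\|\Delta c^{n+1}\|\le C$ for $\tau$ small, with $C$ independent of $n$. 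This is done via an inverse-type argument $\tau^{-1/2}\cdot\tau$ applied to the time-summed dissipation terms.

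I expect this step to be the main obstacle. The induction constant must not grow, and the $H^2$ estimates for $\eta$ and $c$ require controlling $\|\Delta e_\eta^{n+1}\|$ pointwise in $n$, whereas the energy estimate gives only time-summed control of $\nabla e_\mu$. I would first try a second energy estimate that tests the error equations by $-\Delta e^{n+1}_\eta$ and $-\Delta e^{n+1}_c$ to obtain $\max_n\|\Delta e_\eta^n\|\le C\tau^{1/2}$. If that fails, I would fall back on a slightly stronger regularity assumption.
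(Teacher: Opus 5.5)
Your overall architecture matches the paper's. You form the error equations, use stability-type multipliers so that the SAV cross terms $\pm\frac{e_r^{n+1}}{\sqrt{E_1(\eta^n)+C_1}}(f'(\eta^n),D_\tau e_\eta^{n+1})$ and $\pm\exp(\frac{t_{n+1}}{T})e_q^{n+1}(\u^n\cdot\nabla\u^n,\tilde{e}_{\u}^{n+1})$ cancel, and then apply discrete Gronwall under \eqref{tumor-eta-h2-induction}. Telescoping $-\chi_0(e_\eta^{n+1},e_c^{n+1})$ using $\lambda=4\delta\chi_0^2$ is a legitimate and cleaner alternative to how the paper treats that coupling.

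\textbf{The failing step.} The step that fails as written is Step 3's claim that every remaining term closes by Cauchy's inequality. Testing the chemical-potential error equations by $e_\eta^{n+1}-e_\eta^n$ and $e_c^{n+1}-e_c^n$ produces two such terms. One is $(F_{\mu\eta},e_\eta^{n+1}-e_\eta^n)$, with $F_{\mu\eta}$ the lag/ratio remainder in \eqref{tumor-16}. The other is $\chi_0(\eta(t_{n+1})-\eta(t_n),e_c^{n+1}-e_c^n)$, coming from the lagged $\eta^n$ in $\mu_c^{n+1}$. Young's inequality against the numerical dissipation $\|e^{n+1}-e^n\|_{L^2}^2$ causes two problems:
\begin{itemize}
\item it leaves $C\|e_\eta^n\|_{H^1}^2$ with no factor $\tau$, so the summed inequality is not of Gronwall type;
\item the consistency part is $C\tau\int_{t_n}^{t_{n+1}}\|\eta_t\|_{H^1}^2\,dt$, which sums only to $O(\tau)$.
\end{itemize}
The missing idea is to replace the increment using the error equation itself, $D_\tau e_\eta^{n+1}=\Delta e_{\mu\eta}^{n+1}+F_\eta$ (and its analogue for $c$), and integrate by parts. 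The term then becomes $\tau$ times $-(\nabla F_{\mu\eta},\nabla e_{\mu\eta}^{n+1})+(F_{\mu\eta},F_\eta)$ and is absorbed by $\tau\|\nabla e_{\mu\eta}^{n+1}\|_{L^2}^2$. This is exactly \eqref{tumor-36} and the paper's treatment of $\Lambda_{11},\Lambda_{12}$.

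\textbf{Two smaller corrections.}
\begin{itemize}
\item The extra terms in \eqref{tumor-algorithm3} are not truncation residuals controlled by \textbf{A1}. They equal $(\mu_\eta^{n+1}-\mu_\eta^n,\u^n\cdot\nabla\eta^n)+(\u^n-\tilde{\u}^{n+1},\mu_\eta^n\nabla\eta^n)$ plus the $c$ analogue. Hence they contain $e_{\mu\eta}^{n}$, $\tilde{e}_{\u}^{n+1}$ and $\tau\nabla(e_p^{n+1}-e_p^n)$, which must be absorbed; the lagged $\frac14\|e_{\mu\eta}^n\|_{L^2}^2$ goes into the previous step's dissipation, as in $A_6$--$A_9$.
\item Poincar\'e--Wirtinger for $e_\eta$ via mass conservation is unavailable, since \eqref{tumor-mass} conserves only $\int_\Omega(\eta+c)\,dx$. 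It is also unnecessary. The paper tests \eqref{tumor-15} with $\epsilon^2e_\eta^{n+1}$, not $-\epsilon^2\Delta e_\eta^{n+1}$, so the cross term $\epsilon^2(\nabla e_\eta^{n+1},\nabla e_{\mu\eta}^{n+1})$ from testing \eqref{tumor-16} with $e_{\mu\eta}^{n+1}$ cancels. That is how it obtains $\|e_{\mu\eta}^{n+1}\|_{L^2}^2$.
\end{itemize}

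\textbf{Closing the induction.} Step 4 also takes a different route from the paper. The paper gets $\|\nabla\u^{n+1}\|_{L^2}$ exactly as you propose. It obtains $\|\Delta\eta^{n+1}\|_{L^2}$ and $\|\Delta c^{n+1}\|_{L^2}$ from a separate stability result, Theorem~\ref{tumor-eta-h2-theorem}. That theorem tests the scheme in the form \eqref{tumor-22} with $\Delta^2\eta^{n+1}$ and \eqref{tumor-23} with $-\Delta c^{n+1}$; the error bound enters only to control $\|D_\tau c^{n+1}\|_{L^2}$. The paper then reads $\|\Delta c^{n+1}\|_{L^2}$ off \eqref{tumor-42}.

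Your concern for $\eta$ is unfounded. \eqref{tumor-main-results} contains $\tau\sum_n\|e_{\mu\eta}^{n+1}\|_{L^2}^2$, not only gradients, so $\|e_{\mu\eta}^{n+1}\|_{L^2}\le C\tau^{1/2}$, and \eqref{tumor-16} then gives $\|\Delta e_\eta^{n+1}\|_{L^2}\le C\tau^{1/2}$. This smallness, rather than mere boundedness, is what keeps the induction constant fixed for small $\tau$.

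For $c$, however, your inverse argument yields nothing: $\mu_c$ contains no Laplacian, \eqref{tumor-main-results} contains no $\Delta e_c$, and $\|D_\tau e_c^{n+1}\|_{L^2}$ is only $O(1)$. The second estimate, testing the $c$-error equation by $-\Delta e_c^{n+1}$, is therefore genuinely needed. It contains $\chi_0(\Delta(\eta(t_{n+1})-\eta(t_n)),\Delta e_c^{n+1})$, which is controlled at the right order only if $\eta_t\in L^2(0,T;H^2)$. That lies beyond \textbf{A1}, so on your route the fallback to stronger regularity is actually required. Otherwise you should close the $c$-part as the paper does, via Theorem~\ref{tumor-eta-h2-theorem} and \eqref{tumor-42}.
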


We establish the proof of this main theorem through the following sequence of lemmas:
\begin{lemma}
Under the assumption (\ref{tumor-regularity}), we have
\begin{align}
		&\frac{\lambda + \epsilon^2}{2\tau} \left( \|e_{\eta}^{n+1}\|_{L^2}^2 - \|e_{\eta}^n\|_{L^2}^2 + \|e_{\eta}^{n+1} - e_{\eta}^n\|_{L^2}^2 \right)+ \frac{\epsilon^2}{2\tau} ( \|\nabla e_{\eta}^{n+1}\|_{L^2}^2 - \|\nabla e_{\eta}^n\|_{L^2}^2  \label{tumor-eta-results}\\
		&+ \|\nabla(e_{\eta}^{n+1} - e_{\eta}^n)\|_{L^2}^2 )+ \|e_{\mu \eta}^{n+1}\|_{L^2}^2 + \|\nabla e_{\mu \eta}^{n+1}\|_{L^2}^2+ \|\sqrt{P(\eta^n)}e^{n+1}_{\mu \eta} \|^2_{L^2} \notag\\
		\leq & - \frac{e_r^{n+1}}{\sqrt{E_1(\eta^n) + C_1}} \big(f'(\eta^n), \frac{e_{\eta}^{n+1} - e_{\eta}^n}{\tau}\big)
	+ \epsilon_{\mu \eta} \|\nabla e_{u\eta}^{n+1}\|_{L^2}^2+\big(P(\eta^n) e^{n+1}_{\mu \eta}, e^{n+1}_{\mu c}\big)\notag
	\\
	& + \epsilon_{\mu c} \|\nabla e_{uc}^{n+1}\|_{L^2}^2+C \|\nabla e_c^{n+1} \|_{L^2}^2 + C \tau^2 + C (|e_r^{n+1}|^2 +\|e_c^{n+1}\|_{L^2}^2\notag\\
		&+ \epsilon^2 \|\nabla e_{\eta}^{n+1}\|_{L^2}^2+ \|e_{\eta}^n\|_{L^2}^2+ \|e_\u^n\|_{L^2}^2 + \|\nabla e_{\eta}^n\|_{L^2}^2 +\|R^{n+1}_\eta\|_{L^2}^2).\notag
\end{align}
\end{lemma}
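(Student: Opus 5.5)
Our plan is the standard energy argument for the error equations of the $\eta$-subsystem. First, we evaluate the continuous equations \eqref{tumor-continuous1}--\eqref{tumor-continuous2} at $t_{n+1}$ and subtract them from \eqref{tumor-algorithm1}--\eqref{tumor-algorithm2}. This gives
\begin{align*}
&\frac{e_\eta^{n+1}-e_\eta^n}{\tau}-\Delta e_{\mu\eta}^{n+1}+P(\eta^n)(e_{\mu\eta}^{n+1}-e_{\mu c}^{n+1})
=R_\eta^{n+1}-\Big(\tfrac{r^{n+1}}{\sqrt{E_1(\eta^n)+C_1}}\u^n\cdot\nabla\eta^n-\tfrac{r(t_{n+1})}{\sqrt{E_1(\eta(t_{n+1}))+C_1}}\u(t_{n+1})\cdot\nabla\eta(t_{n+1})\Big)\\
&\qquad-\big(P(\eta^n)-P(\eta(t_{n+1}))\big)\big(\mu_\eta(t_{n+1})-\mu_c(t_{n+1})\big),\\
&e_{\mu\eta}^{n+1}=\lambda e_\eta^{n+1}-\epsilon^2\Delta e_\eta^{n+1}-\chi_0e_c^{n+1}+\tfrac{e_r^{n+1}}{\sqrt{E_1(\eta^n)+C_1}}f'(\eta^n)+\Big(\tfrac{r(t_{n+1})f'(\eta^n)}{\sqrt{E_1(\eta^n)+C_1}}-\tfrac{r(t_{n+1})f'(\eta(t_{n+1}))}{\sqrt{E_1(\eta(t_{n+1}))+C_1}}\Big).
\end{align*}
Here $R_\eta^{n+1}=\partial_t\eta(t_{n+1})-D_\tau\eta(t_{n+1})$ is the truncation error. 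By Taylor's formula and \textbf{A1}, it satisfies $\tau\sum\|R_\eta^{n+1}\|^2\le C\tau^2\int\|\eta_{tt}\|^2$.

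Next we take inner products of the first error equation with $e_{\mu\eta}^{n+1}$ and with $(\lambda+\epsilon^2)e_\eta^{n+1}$ (or with a suitable combination), and of the second with $(e_\eta^{n+1}-e_\eta^n)/\tau$ and with $e_{\mu\eta}^{n+1}$. The cross terms $(\frac{e_\eta^{n+1}-e_\eta^n}{\tau},e_{\mu\eta}^{n+1})$ cancel, and $(e_{\mu\eta}^{n+1},e_{\mu\eta}^{n+1})$ gives the $\|e_{\mu\eta}^{n+1}\|^2$ term. The identity $2(a-b,a)=|a|^2-|b|^2+|a-b|^2$ then produces the telescoping differences on the left of \eqref{tumor-eta-results}. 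The terms $-(\Delta e_{\mu\eta}^{n+1},e_{\mu\eta}^{n+1})$ and $(P(\eta^n)e_{\mu\eta}^{n+1},e_{\mu\eta}^{n+1})$ give $\|\nabla e_{\mu\eta}^{n+1}\|^2+\|\sqrt{P(\eta^n)}e_{\mu\eta}^{n+1}\|^2$. The coupling $(P(\eta^n)e_{\mu c}^{n+1},e_{\mu\eta}^{n+1})$ is kept on the right exactly as in the statement, to be cancelled later against the $c$-estimate. The $e_r$ term paired with $(e_\eta^{n+1}-e_\eta^n)/\tau$ is also kept unchanged, because it will cancel against the $r$-error equation obtained from \eqref{tumor-algorithm3}. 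The $\chi_0 e_c^{n+1}$ contribution is treated by integrating by parts and Cauchy's inequality \eqref{cauchy-inequality}, which yields $C\|\nabla e_c^{n+1}\|^2+C\|e_c^{n+1}\|^2$ plus absorbable $\epsilon$-pieces.

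The remaining right-hand terms are consistency and nonlinear differences. Each is split into telescoping pieces, for instance
\[
\u^n\cdot\nabla\eta^n-\u(t_{n+1})\cdot\nabla\eta(t_{n+1})=e_\u^n\cdot\nabla\eta^n+\u(t_n)\cdot\nabla e_\eta^n+(\text{time-shift terms}),
\]
and likewise for $r^{n+1}/\sqrt{\cdot}$, where $r^{n+1}=e_r^{n+1}+r(t_{n+1})$. These are paired with $e_{\mu\eta}^{n+1}$; to avoid needing $L^\infty$ bounds on $\nabla\eta^n$, we integrate by parts using $\nabla\cdot\u=0$, which moves the gradient onto $e_{\mu\eta}^{n+1}$ and produces the $\epsilon_{\mu\eta}\|\nabla e_{\mu\eta}^{n+1}\|^2$ term.

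The bounds then use the induction hypothesis \eqref{tumor-eta-h2-induction}, the embedding $H^2\subset L^\infty$ in 2D, \eqref{tumor-p-eta-infty}, the Lipschitz property of $P$, and the local Lipschitz property of $f'$ and of $E_1$ on bounded $H^1$ sets together with $E_1+C_1\ge C_0>0$. These give $C(|e_r^{n+1}|^2+\|e_\eta^n\|^2+\|\nabla e_\eta^n\|^2+\|e_\u^n\|^2)+C\tau^2$. We expect the main obstacle to be the convective term $\frac{r^{n+1}}{\sqrt{E_1(\eta^n)+C_1}}\u^n\cdot\nabla\eta^n$, since it mixes $e_r$, $e_\u$ and $e_\eta$ at lagged times with only an $L^2$ bound on $\u^n$. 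We would handle it with Hölder's inequality ($L^4$–$L^4$–$L^2$), Ladyzhenskaya's inequality and the $H^1$ induction bound on $\u^n$, placing the gradient on $e_{\mu\eta}^{n+1}$ so that Young's inequality leaves only $\epsilon$-weighted dissipative terms.
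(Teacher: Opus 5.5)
There is a genuine gap. Your error equations, choice of test functions and bookkeeping follow the paper. The bookkeeping matches in keeping $(P(\eta^n)e_{\mu\eta}^{n+1},e_{\mu c}^{n+1})$ and the $e_r$-term for later cancellation, and in integrating the convective terms by parts with $\nabla\cdot\u=0$. The gap is in the nonlinear remainder of the chemical-potential error equation,
\[
F_{\mu\eta}=\frac{r(t_{n+1})f'(\eta^n)}{\sqrt{E_1(\eta^n)+C_1}}-\frac{r(t_{n+1})f'(\eta(t_{n+1}))}{\sqrt{E_1(\eta(t_{n+1}))+C_1}}.
\]
You test that equation with $D_\tau e_\eta^{n+1}=(e_\eta^{n+1}-e_\eta^n)/\tau$ as well as with $e_{\mu\eta}^{n+1}$, so $F_{\mu\eta}$ is also paired with $D_\tau e_\eta^{n+1}$. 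You only say these differences are paired with $e_{\mu\eta}^{n+1}$. The pairing $(F_{\mu\eta},D_\tau e_\eta^{n+1})$ cannot be closed by Lipschitz bounds and Young's inequality. The left-hand side only controls $\tau^{-1}\|e_\eta^{n+1}-e_\eta^n\|_{L^2}^2$, so Young leaves $C\tau^{-1}\|F_{\mu\eta}\|_{L^2}^2\approx C\tau^{-1}(\|e_\eta^n\|_{L^2}^2+\tau^2)$. After multiplying by $\tau$ and summing, this gives $C\sum_n\|e_\eta^n\|_{L^2}^2$ with no factor $\tau$, which Gronwall cannot absorb. It also leaves an $O(\tau)$ remainder instead of $O(\tau^2)$.

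The missing step is the paper's \eqref{tumor-36}, the standard device in SAV error analysis. Eliminate $D_\tau e_\eta^{n+1}$ through the first error equation, $D_\tau e_\eta^{n+1}=\Delta e_{\mu\eta}^{n+1}+F_\eta$, and integrate by parts:
\[
(F_{\mu\eta},e_{\mu\eta}^{n+1})-(F_{\mu\eta},D_\tau e_\eta^{n+1})=(\nabla F_{\mu\eta},\nabla e_{\mu\eta}^{n+1})+(F_{\mu\eta},e_{\mu\eta}^{n+1}-F_\eta).
\]
This then needs an $H^1$ estimate $\|\nabla F_{\mu\eta}\|_{L^2}\le C(\|e_\eta^n\|_{H^1}+\tau)$, not just $L^2$-Lipschitz continuity of $f'$. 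Here the $L^\infty$ bound on $\eta^n$ from the induction hypothesis \eqref{tumor-eta-h2-induction} genuinely enters, to control $f''(\eta^n)\nabla(\eta^n-\eta(t_{n+1}))$. The leftover $(F_{\mu\eta},F_\eta)$ is then bounded like the other reaction and convective terms.

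The same substitution is tacitly required for the term $\chi_0(e_c^{n+1},D_\tau e_\eta^{n+1})$, which you say you treat "by integrating by parts". That pairing cannot be integrated by parts as it stands. Only after replacing $D_\tau e_\eta^{n+1}$ by $\Delta e_{\mu\eta}^{n+1}+F_\eta$ do you get $\chi_0(\nabla e_c^{n+1},\nabla e_{\mu\eta}^{n+1})$, and hence the $C\|\nabla e_c^{n+1}\|_{L^2}^2$ in the statement. With this substitution made explicit for both terms, the rest of your plan goes through.
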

\begin{proof}
Equation (\ref{tumor-continuous1})--(\ref{tumor-continuous2}) at $t=t_{n+1}$ satisfy
\begin{align}
	&\frac{\eta(t_{n+1})-\eta(t_n)}{\tau}+\frac{r(t_{n+1})}{\sqrt{E_1(\eta(t_{n+1})) + C_1}} \u(t_{n+1}) \cdot \nabla \eta(t_{n+1})\label{tumor-continuous1-1}\\
	&\qquad-\Delta \mu_\eta(t_{n+1}) + P(\eta(t_n)) (\mu_\eta(t_{n+1}) - \mu_c(t_{n+1})) = R_\eta^{n+1},\notag\\
	&\mu_\eta(t_{n+1}) = \lambda \eta(t_{n+1}) - \epsilon^2 \Delta \eta(t_{n+1}) - \chi_0 c(t_{n+1})\label{tumor-continuous1-2}\\
	& \qquad \qquad \quad+ \frac{r(t_{n+1})}{\sqrt{E_1(\eta(t_{n+1})) + C_1}} f'(\eta(t_{n+1})). \notag
\end{align}
where
\begin{align*}
	R_\eta^{n+1} &=  \frac{\eta(t_{n+1}) - \eta(t_n)}{\tau}-\frac{\partial \eta}{\partial t}(t_{n+1}) - P(\eta(t_{n+1})) (\mu_\eta(t_{n+1}) - \mu_c(t_{n+1})) \\
	&\quad+ P(\eta(t_n)) (\mu_\eta(t_{n+1}) - \mu_c(t_{n+1})).\notag
\end{align*}

By the assumption (\ref{tumor-regularity}),we have 
\begin{align*}
	\| R_\eta^{n+1} \|_{L^2} &\leq\| \frac{1}{\tau} \int_{t_n}^{t_{n+1}} (t_n - t) \frac{\partial^2 \eta}{\partial t^2} dt \|_{L^2}+ C \| \eta(t_{n+1}) - \eta(t_n) \|_{L^4} (\| \mu_\eta \|_{L^4} + \| \mu_c \|_{L^4})\notag \\
	&\leq C \tau^{\frac{1}{2}}.\notag
\end{align*}

Subtracting (\ref{tumor-continuous1-1})--(\ref{tumor-continuous1-2}) from (\ref{tumor-algorithm1})--(\ref{tumor-algorithm2}), we have the following error equations
\begin{align}
		&\frac{e_\eta^{n+1}-e_\eta^n}{\tau} - \Delta e_{\mu \eta}^{n+1}\label{tumor-15}\\ = &-R^{n+1}_{\eta} - P(\eta^n)(e_{\mu \eta}^{n+1} - e_{\mu c}^{n+1})- \big(P(\eta^n) - P(\eta(t_n))\big)(\mu_\eta(t_{n+1}) - \mu_c(t_{n+1}))\notag\\
		& +\frac{r(t_{n+1})}{\sqrt{E_1(\eta(t_{n+1}))+C_1}} \nabla \cdot \big(\u(t_{n+1})\eta(t_{n+1}) - \u(t_n)\eta(t_n)\big)\notag \\
		& + \big(\frac{r(t_{n+1})}{\sqrt{E_1(\eta(t_{n+1}))+C_1}} - \frac{r^{n+1}}{\sqrt{E_1(\eta^n)+C_1}}\big) \nabla \cdot \big(\u(t_n)\eta(t_n)\big)  \notag\\
		&- \frac{r^{n+1}}{\sqrt{E_1(\eta^n)+C_1}} \nabla \cdot \big(\u(t_n)e_\eta^n\big)- \frac{r^{n+1}}{\sqrt{E_1(\eta^n)+C_1}} \nabla \cdot \big(e_\u^n \eta^n\big)\notag\\
			:= & F_\eta\notag
		\end{align}
and		
		\begin{align}
			e_{\mu \eta}^{n+1} = & \lambda e_\eta^{n+1} - \epsilon^2 \Delta e_\eta^{n+1} - \chi_0 e_c^{n+1} + e_r^{n+1} \frac{f'(\eta^n)}{\sqrt{E_1(\eta^n)+C_1}} \label{tumor-16}\\
			& + \frac{r(t_{n+1}) f'(\eta(t_{n+1})) \big(E_1(\eta(t_{n+1})) - E_1(\eta^n)\big)}{\sqrt{E_1(\eta^n)+C_1} \cdot \sqrt{E_1(\eta(t_{n+1}))+C_1}\big(\sqrt{E_1(\eta^n)+C_1} +\sqrt{E_1(\eta(t_{n+1}))+C_1}\big)} \notag\\
			&+ \frac{r(t_{n+1})}{\sqrt{E_1(\eta^n)+C_1}} \big(f'(\eta^n) - f'(\eta(t_{n+1}))\big)\notag\\
			:=& \lambda e_{\eta}^{n+1} - \epsilon^2 \Delta e_{\eta}^{n+1} - \chi_0 e_{c}^{n+1} + e_{r}^{n+1} \frac{f'(\eta^n)}{\sqrt{E_1(\eta^n) + C_1}} + F_{\mu \eta}.\notag
					\end{align}
Taking the inner product of \eqref{tumor-15} with $e_{\mu \eta}^{n+1}$ and $\epsilon^2 e_{\eta}^{n+1}$, \eqref{tumor-16} with $e_{\mu \eta}^{n+1}$ and $\frac{e_{\eta}^{n+1} - e_{\eta}^n}{\tau}$, respectively.
Summing up the resulting error equations, we have
\begin{align}
		&\frac{\lambda + \epsilon^2}{2\tau} \left( \|e_{\eta}^{n+1}\|_{L^2}^2 - \|e_{\eta}^n\|_{L^2}^2 + \|e_{\eta}^{n+1} - e_{\eta}^n\|_{L^2}^2 \right)+ \|e_{\mu \eta}^{n+1}\|_{L^2}^2  \label{tumor-19}\\
		&+ \frac{\epsilon^2}{2\tau} \left( \|\nabla e_{\eta}^{n+1}\|_{L^2}^2 - \|\nabla e_{\eta}^n\|_{L^2}^2 + \|\nabla(e_{\eta}^{n+1} - e_{\eta}^n)\|_{L^2}^2 \right)+ \|\nabla e_{\mu \eta}^{n+1}\|_{L^2}^2 \notag\\
		= & (F_\eta, e_{\mu\eta}^{n+1} + \epsilon^2 e_{\eta}^{n+1}) +( \lambda e_{\eta}^{n+1}- \chi_0 e_c^{n+1}, e_{\mu \eta}^{n+1})\notag\\
		&+ \frac{f'(\eta^n)}{\sqrt{E_1(\eta^n) + C_1}} (e_r^{n+1}, e_{\mu \eta}^{n+1})
		- \chi_0 \big( e_c^{n+1}, \frac{e_{\eta}^{n+1} - e_{\eta}^n}{\tau} \big) \notag\\
		&- \frac{e_r^{n+1}}{\sqrt{E_1(\eta^n) + C_1}} \big(f'(\eta^n), \frac{e_{\eta}^{n+1} - e_{\eta}^n}{\tau}\big)+ \big( F_{\mu \eta}, e_{\mu \eta}^{n+1}-\frac{e_{\eta}^{n+1} - e_{\eta}^n}{\tau} \big) .\notag
\end{align}

Next, we estimate the above terms one by one. 	Observing that $|r^{n+1}| \leq C$ and $E_1+C_1 >C_0$,
by (\ref{tumor-regularity}), (\ref{tumor-p-eta-infty}), integrate by parts (IBP), together with the H\"{o}lder inequality, and Young's inequality, we deduce that
\begin{align}
		&(F_\eta, e_{\mu\eta}^{n+1} + \epsilon^2 e_{\eta}^{n+1}) \label{tumor-A1}\\ 
		\le &\|e_{\mu\eta}^{n+1} + \epsilon^2 e_{\eta}^{n+1}\|_{L^2}\|R^{n+1}_\eta\|_{L^2} +\big(e_{\mu\eta}^{n+1},- P(\eta^n)(e_{\mu \eta}^{n+1} - e_{\mu c}^{n+1}) \big)\notag \\
		&+C\| \epsilon^2 e_{\eta}^{n+1}\|_{L^2} \| P(\eta^n)\|_{L^{\infty}} \|(e_{\mu \eta}^{n+1} - e_{\mu c}^{n+1})\|_{L^2}\notag\\
		 &+C\|e_{\mu\eta}^{n+1} + \epsilon^2 e_{\eta}^{n+1}\|_{L^3} \|e_{\eta}^n\|_{L^2} \|\mu_{\eta}(t_{n+1}) - \mu_{c}(t_{n+1})\|_{L^6} \notag\\
		& + \|\nabla(e_{\mu\eta}^{n+1} + \epsilon^2 e_{\eta}^{n+1})\|_{L^2} \big(\|\u(t_{n+1})\|_{L^\infty} \|\eta_{t}\|_{L^2} \tau + \| \eta(t_n)\|_{L^\infty} \|\u_{t}\|_{L^2} \tau\big)\notag \\
		& + C \|e_{\mu\eta}^{n+1} + \epsilon^2 e_{\eta}^{n+1}\|_{L^2} |e_r^{n+1}| + C \|\nabla (e_{\mu\eta}^{n+1} + \epsilon^2 e_{\eta}^{n+1})\|_{L^2} (\|e_{\eta}^n\|_{L^2} + \|\eta_{t}\|_{L^2} \tau) \notag\\
		& + C \| e_{\mu\eta}^{n+1} + \epsilon^2 e_{\eta}^{n+1}\|_{L^2} \|\u(t_n)\|_{\infty}\|\nabla e_{\eta}^n\|_{L^2} + C \|\nabla(e_{\mu\eta}^{n+1} + \epsilon^2 e_{\eta}^{n+1})\|_{L^2} \|e_\u^n\|_{L^2} \notag\\
		\le & - \|\sqrt{P(\eta^n)} e^{n+1}_{\mu \eta} \|^2_{L^2}+\big(P(\eta^n) e^{n+1}_{\mu \eta}, e^{n+1}_{\mu c}\big)+ \epsilon_{\mu \eta} \|\nabla e_{u\eta}^{n+1}\|_{L^2}^2 + \epsilon_{\mu c} \|\nabla e_{uc}^{n+1}\|_{L^2}^2\notag \\
		& + C (\tau^2+|e_r^{n+1}|^2 + \|e_{\eta}^n\|_{L^2}^2 + \|e_\u^n\|_{L^2}^2 + \|\nabla e_{\eta}^n\|_{L^2}^2 + \epsilon^2 \|\nabla e_{\eta}^{n+1}\|_{L^2}^2+\|R^{n+1}_\eta\|_{L^2}^2).\notag
\end{align}
where we also use the following relation:
 \begin{align*}
		&\big( \frac{r(t_{n+1})}{\sqrt{E_1(\eta(t_{n+1})) + C_1}} - \frac{r^{n+1}}{\sqrt{E_1(\eta^n) + C_1}} \big) \\
		&= \frac{-\sqrt{E_1(\eta^n) + C_1} \, e_r^{n+1} + r^{n+1} \big( \sqrt{E_1(\eta^n) + C_1} - \sqrt{E_1(\eta(t_{n+1})) + C_1} \big)}{\sqrt{E_1(\eta(t_{n+1})) + C_1} \cdot \sqrt{E_1(\eta^n) + C_1}}.\notag
\end{align*}

By using H\"older inequality and Young inequality, we can deduce
\begin{align}
	( \lambda e_{\eta}^{n+1}- \chi_0 e_c^{n+1}, e_{\mu \eta}^{n+1}) &\le C \|e_{\eta}^{n+1}\|_{L^2}^2+C \|e_c^{n+1}\|_{L^2}^2 + \epsilon_{\mu \eta} \|e_{\mu\eta}^{n+1}\|_{L^2}^2
	\\
	\frac{f'(\eta^n)}{\sqrt{E_1(\eta^n) + C_1}} (e_r^{n+1}, e_{\mu\eta}^{n+1}) &\le C |e_r^{n+1}|^2 + \epsilon_{\mu \eta} \|e_{\mu\eta}^{n+1}\|_{L^2}^2.
\end{align}

From (\ref{tumor-15}), we observe that
\begin{align}
(F_{\mu \eta}, e_{\mu\eta}^{n+1}) - (F_{\mu \eta}, \frac{e_{\eta}^{n+1} - e_{\eta}^n}{\tau}) &= (F_{\mu \eta}, e_{\mu\eta}^{n+1} - \Delta e_{u\eta}^{n+1} - F_\eta) \label{tumor-36}
	\\
	&= (\nabla F_{\mu \eta}, \nabla e_{\mu\eta}^{n+1})+(F_{\mu \eta}, e_{\mu\eta}^{n+1} - F_\eta).\notag
\end{align}

By (\ref{tumor-regularity-L2}) and (\ref{tumor-16}), we obtain
\begin{align}
	\|F_{\mu \eta}\|_{L^2} \le& C \|e_{\eta}^n\|_{L^2} + C \|\eta_{t}\|_{L^2} \tau,\\
	\|\nabla F_{\mu \eta}\|_{L^2} \le &C \|e_{\eta}^n\|_{L^2} + C \|\nabla e_{\eta}^n\|_{L^2} + C \|\eta_{t}\|_{L^2} \tau.
\end{align}

Therefore,  it follows that
\begin{align}
	(\nabla F_{\mu \eta}, \nabla e_{u\eta}^{n+1})
	\le C \|e_{\eta}^n\|_{L^2}^2
	+ C \|\nabla e_{\eta}^n\|_{L^2}^2
	+ C \|\eta_{t}\|_{L^2}^2 \tau^2
	+ \epsilon_{\mu\eta} \|\nabla e_{u\eta}^{n+1}\|_{L^2}^2 .
\end{align}

Similarly to the estimates in (\ref{tumor-A1}) and (\ref{tumor-36}), we can derive
\begin{align}
	&\quad	(F_{\mu \eta}, e_{\mu\eta}^{n+1} - F_\eta) -\chi_0 \big( e_c^{n+1}, \frac{e_\eta^{n+1} - e_\eta^n}{\tau} \big)\\
	&\leq C\|\nabla e_c^{n+1} \|_{L^2}^2+ C\|e^{n+1}_c\|^2_{L^2}+C \tau^2 + \epsilon_{\mu\eta} \|e_{\mu\eta}^{n+1}\|_{L^2}^2 + \epsilon_{\mu c} \|\nabla e_{\mu c}^{n+1}\|_{L^2}^2 \notag
	\\
	&\quad+ C \left( |e_r^{n+1}|^2 + \|e_{\eta}^n\|_{L^2}^2 + \|e_{\u}^n\|_{L^2}^2 + \|\nabla e_{\eta}^n\|_{L^2}^2 +\|R^{n+1}_\eta\|_{L^2}^2 \right) \notag
\end{align}

Substituting the above inequalities into (\ref{tumor-19}), we obtain (\ref{tumor-eta-results}).
	\end{proof}

			
\begin{lemma}
	Under the assumption (\ref{tumor-regularity}), we have
\begin{align}
		&\frac{1}{\tau}(|e^{n+1}_r|^2-|e^{n}_r|^2+|e^{n+1}_r-e^{n}_r|^2)\label{tumor-r-results}\\
		\leq& \frac{e^{n+1}_r}{\sqrt{E_1(\eta^n) + C_1}} \big( f'(\eta^n), \frac{e_\eta^{n+1} - e_\eta^n}{\tau} \big)+\epsilon_{\mu \eta}\|\nabla e^{n+1}_{\mu \eta}\|^2_{L^2} 
		+\frac{1}{4}\|e^n_{\mu \eta}\|^2_{L^2}\notag\\
		&+C \tau^2+C\tau ^2 \|\nabla (e^{n+1}_p-e^n_p)\|^2_{L^2}+\epsilon_{\mu c}\|\nabla e^{n+1}_{\mu c}\|^2_{L^2} 
		+\frac{1}{4}\|e^n_{\mu c}\|^2_{L^2}\notag\\
		&+C(|e^{n+1}_r|^2+C \|e^{n+1}_\u\|^2_{L^2}
		+ \|e^n_\u\|^2_{L^2}+|R^{n+1}_r|^2+C\|e^n_\eta\|^2).\notag
\end{align}
\end{lemma}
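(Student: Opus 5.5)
We write the continuous equation \eqref{tumor-continuous3} at $t_{n+1}$ in the same form as the discrete equation \eqref{tumor-algorithm3}. The coefficient $\frac{1}{2\sqrt{E_1(\eta(t_{n+1}))+C_1}}$ is frozen as in the scheme. We then add the four zero-valued convective/force pairings $(\mu_\eta,\u\cdot\nabla\eta)-(\u,\mu_\eta\nabla\eta)$ and $(\mu_c,\u\cdot\nabla c)-(\u,\mu_c\nabla c)$ evaluated at $t_{n+1}$. This gives a truncation residual $R_r^{n+1}$ that collects three pieces: the Taylor remainder $\frac{r(t_{n+1})-r(t_n)}{\tau}-r_t(t_{n+1})$, the difference of $f'(\eta)$ pairings, and the time-lag differences in these pairings. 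The remainder is controlled by $r_{tt}\in L^2(0,T)$ from \eqref{tumor-regularity}, so $|R_r^{n+1}|^2\le C\tau\int_{t_n}^{t_{n+1}}|r_{tt}|^2dt$. Subtracting from \eqref{tumor-algorithm3} gives an error equation for $D_\tau e_r^{n+1}$. We multiply it by $2e_r^{n+1}$ and use $2a(a-b)=a^2-b^2+(a-b)^2$; this produces the left-hand side of \eqref{tumor-r-results}.

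Next we split the right-hand side term by term. For the $f'$ term, we write $f'(\eta^n)\frac{\eta^{n+1}-\eta^n}{\tau}-f'(\eta(t_{n+1}))\eta_t(t_{n+1})$ as
\[
f'(\eta^n)\frac{e_\eta^{n+1}-e_\eta^n}{\tau}+\bigl(f'(\eta^n)-f'(\eta(t_{n+1}))\bigr)\frac{\eta(t_{n+1})-\eta(t_n)}{\tau}+\text{(truncation)}.
\]
The first piece is kept exactly; it is the term on the right of \eqref{tumor-r-results} and will cancel against the matching term in \eqref{tumor-eta-results}. The second piece is bounded by $C\|e^n_\eta\|+C\tau$. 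Here we use the $H^1$ bound \eqref{tumor-regularity-L2}, the induction hypothesis \eqref{tumor-eta-h2-induction} to control $f''$ in $L^\infty$, and the regularity of $\eta_t$. The mismatch between $\frac{1}{\sqrt{E_1(\eta^n)+C_1}}$ and $\frac{1}{\sqrt{E_1(\eta(t_{n+1}))+C_1}}$ is handled by the same identity used after \eqref{tumor-A1}. Since $E_1+C_1\ge C_0$, it costs $C\|e_\eta^n\|+C\tau$, multiplied by $|e_r^{n+1}|$.

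For the convective pairings, we expand $(\mu_\eta^{n+1},\u^n\cdot\nabla\eta^n)-(\mu_\eta(t_{n+1}),\u(t_{n+1})\cdot\nabla\eta(t_{n+1}))$ into three parts. These are $(e_{\mu\eta}^{n+1},\u^n\cdot\nabla\eta^n)$, pieces containing $e_\u^n$ and $e_\eta^n$, and time shifts of order $\tau$. The first part is rewritten by integration by parts as $-(\nabla e_{\mu\eta}^{n+1},\u^n\eta^n)$, which uses $\nabla\cdot\u^n=0$ and $\u^n\cdot\n=0$. It is then bounded by Young's inequality by $\epsilon_{\mu\eta}\|\nabla e_{\mu\eta}^{n+1}\|^2+C|e_r^{n+1}|^2$, using $\|\u^n\|_{L^4}\|\eta^n\|_{L^4}\le C$ from \eqref{tumor-eta-h2-induction}. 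The $c$-pairing is treated identically and yields $\epsilon_{\mu c}\|\nabla e_{\mu c}^{n+1}\|^2$. For the force pairings $(\tilde{\u}^{n+1},\mu_\eta^n\nabla\eta^n)$, we write $\tilde{\u}^{n+1}=\u^{n+1}+\tau\nabla(p^{n+1}-p^n)$ using \eqref{tumor-algorithm8}. We then split
\[
\tilde e_\u^{n+1}=e_\u^{n+1}+\tau\nabla(e_p^{n+1}-e_p^n)+\tau\nabla(p(t_{n+1})-p(t_n)).
\]
This produces three contributions: $C\|e_\u^{n+1}\|^2$, $C\tau^2\|\nabla(e_p^{n+1}-e_p^n)\|^2$, and $C\tau^2$ from $p_t\in L^2(0,T;H^1)$. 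The error in $\mu_\eta^n$ appears as $(\u(t_{n+1}),e_{\mu\eta}^n\nabla\eta^n)\le C|e_r^{n+1}|\,\|e^n_{\mu\eta}\|$, which explains the $\frac14\|e^n_{\mu\eta}\|^2$ term; the analogous $\mu_c$ term is handled the same way.

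The main obstacle is the force pairing $(\tilde{\u}^{n+1},\mu_\eta^n\nabla\eta^n)$. It requires $\mu_\eta^n\nabla\eta^n$ to lie in $L^{6/5}$ or $L^2$ uniformly, which rests on the induction hypothesis \eqref{tumor-eta-h2-induction} for $\|\Delta\eta^n\|$. It also requires the lagged chemical-potential error to carry a coefficient of exactly $\frac14$, so that it can later be absorbed by the full $\|e_{\mu\eta}^{n+1}\|^2$ from \eqref{tumor-eta-results} after summation in time. I would first try Hölder's inequality in the form $L^4\cdot L^2\cdot L^4$ combined with Ladyzhenskaya's inequality in two dimensions, and use Young's inequality with weight $\frac14$. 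All remaining terms go into $C(|e_r^{n+1}|^2+\|e_\u^{n+1}\|^2+\|e_\u^n\|^2+\|e_\eta^n\|^2+|R_r^{n+1}|^2)+C\tau^2$.
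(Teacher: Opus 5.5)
Your argument is essentially correct at the paper's level of rigor, and much of it coincides with the paper's proof. The shared parts are the Taylor remainder, the splitting of the $f'$ term that keeps $\frac{e^{n+1}_r}{\sqrt{E_1(\eta^n)+C_1}}\bigl(f'(\eta^n),\frac{e^{n+1}_\eta-e^n_\eta}{\tau}\bigr)$ for later cancellation, the coefficient-mismatch identity, and the decomposition of $\tilde{e}^{n+1}_{\mathbf{u}}$ obtained from \eqref{tumor-algorithm8}.

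The genuine difference is in the four transport/force pairings. The paper never compares them with continuous pairings, since \eqref{tumor-continuous3} contains none. Instead it adds and subtracts the discrete quantity $(\mu^n_\eta,\mathbf{u}^n\cdot\nabla\eta^n)$, which equals $(\mathbf{u}^n,\mu^n_\eta\nabla\eta^n)$ exactly. The scheme's contribution then becomes $(\mu^{n+1}_\eta-\mu^n_\eta,\mathbf{u}^n\cdot\nabla\eta^n)+(\mathbf{u}^n-\tilde{\mathbf{u}}^{n+1},\mu^n_\eta\nabla\eta^n)$, and likewise for $c$. Only time increments need splitting, into error differences plus $O(\tau)$ increments of the exact $\mu_\eta$, $\mu_c$, $\mathbf{u}$. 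The term $\frac14\|e^n_{\mu\eta}\|^2_{L^2}$ comes from the first pairing, $\|e^n_{\mathbf{u}}\|^2_{L^2}$ from the second, and no $e^n_\eta$ or $e^n_c$ terms arise at all.

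Your split against the vanishing pairings at $t_{n+1}$ is more systematic but produces more terms, and one group needs care. If you bound $(\mu_\eta(t_{n+1}),\mathbf{u}(t_n)\cdot\nabla e^n_\eta)$ and $-(\mathbf{u}(t_{n+1}),\mu_\eta(t_n)\nabla e^n_\eta)$ separately, you get $\|\nabla e^n_\eta\|^2_{L^2}$, and the $c$-analogues give $\|\nabla e^n_c\|^2_{L^2}$. Neither appears on the right-hand side of \eqref{tumor-r-results}. Combine them instead: their sum is $\bigl((\mu_\eta(t_{n+1})-\mu_\eta(t_n))\mathbf{u}(t_n)-\mu_\eta(t_n)(\mathbf{u}(t_{n+1})-\mathbf{u}(t_n)),\nabla e^n_\eta\bigr)$. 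This is $O(\tau)$ because $\|\nabla e^n_\eta\|_{L^2}$ is bounded, and the same works for $c$.

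In return, your integration by parts $(e^{n+1}_{\mu\eta},\mathbf{u}^n\cdot\nabla\eta^n)=-(\nabla e^{n+1}_{\mu\eta},\mathbf{u}^n\eta^n)$ is exactly what produces the $\epsilon_{\mu\eta}\|\nabla e^{n+1}_{\mu\eta}\|^2_{L^2}$ term. The paper's bound through $\|\mu^{n+1}_\eta-\mu^n_\eta\|_{L^2}$ does not deliver it by itself, since $e^{n+1}_{\mu\eta}$ is not mean-free and Poincar\'e is unavailable.

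Two minor points.

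First, keep $R^{n+1}_r$ as the Taylor remainder only, as the paper does. The $f'$ and time-lag pieces you fold into it are not controlled by $r_{tt}$, though you estimate them separately anyway.

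Second, the bound $\|\mu^n_\eta\nabla\eta^n\|_{L^2}\le C$ does not follow from \eqref{tumor-eta-h2-induction} alone. That hypothesis gives only $\mu^n_\eta\in L^2$ and $\nabla\eta^n\in L^p$ for $p<\infty$; the paper makes the same leap when it uses $\|\mu^n_\eta\|_{L^3}$. The missing ingredient is the error estimate at the previous level inside the induction. It gives $\|e^n_{\mu\eta}\|_{H^1}\le C\tau^{1/2}$ and hence $\|\mu^n_\eta\|_{H^1}\le C$. Putting the $L^4$ norm on $\tilde{e}^{n+1}_{\mathbf{u}}$ via Ladyzhenskaya would instead introduce $\|\nabla\tilde{e}^{n+1}_{\mathbf{u}}\|_{L^2}$, which is absent from the lemma's right-hand side.
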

\begin{proof}
Equation (\ref{tumor-continuous3}) at $t=t_{n+1}$ satisfies
\begin{align}\label{tumor-continuous3-1}
	& \frac{r(t_{n+1}) - r(t_n)}{\tau} =  \frac{1}{2\sqrt{E_1(\eta(t_{n+1})) + C_1}} \int f'(\eta(t_{n+1})) \frac{\partial \eta}{\partial t}(t_{n+1}) dx+R_r^{n+1},
\end{align}
where
$
		R_r^{n+1}=\frac{r(t_{n+1}) - r(t_n)}{\tau} - r_t(t_{n+1}).
$

By the assumption (\ref{tumor-regularity}),we have
\begin{align}\label{tumor-45}
		|R_r^{n+1}|\leq | \frac{1}{\tau} \int_{t_n}^{t_{n+1}} (t_n - t) \frac{\partial^2 r}{\partial t^2} dt | \leq C \tau^{\frac{1}{2}}.
\end{align}

Subtracting (\ref{tumor-continuous3-1}) from (\ref{tumor-algorithm3}) 
and taking the inner product with $2e_r^{n+1}$, we obtain
%
\begin{align}
&\frac{1}{\tau}(|e^{n+1}_r|^2-|e^{n}_r|^2+|e^{n+1}_r-e^{n}_r|^2)\label{tumor-r-error-equation}\\
=&-2 e^{n+1}_r R_r^{n+1} + \frac{e^{n+1}_r}{\sqrt{E_1(\eta^n) + C_1}} \big( f'(\eta^n), \frac{e_\eta^{n+1} - e_\eta^n}{\tau} \big)\notag \\
&- \frac{e^{n+1}_r}{\sqrt{E_1(\eta^n) + C_1}} ( f'(\eta^n), \frac{\partial \eta}{\partial t}(t_{n+1}) - \frac{\eta(t_{n+1}) - \eta(t_n)}{\tau}  ) \notag\\
& + \frac{e^{n+1}_r}{\sqrt{E_1(\eta^n) + C_1}} \big( f'(\eta^n) - f'(\eta(t_{n+1})), \frac{\partial \eta}{\partial t}(t_{n+1}) \big)\notag \\
& + \big( \frac{e^{n+1}_r}{\sqrt{E_1(\eta^n) + C_1}} - \frac{e^{n+1}_r}{\sqrt{E_1(\eta(t_{n+1})) + C_1}} \big) ( f'(\eta(t_{n+1})), \frac{\partial \eta}{\partial t}(t_{n+1}) ) \notag\\
&+ \frac{e^{n+1}_r}{\sqrt{E_1(\eta^n) + C_1}} \big( ( \mu_\eta^{n+1}, \u^n \cdot \nabla \eta^n )-(\mu_\eta^n, \u^n \cdot \nabla \eta^n )\big) \notag\\
&+ \frac{e^{n+1}_r}{\sqrt{E_1(\eta^n) + C_1}} \big(( \u^n, \mu_\eta^n \nabla \eta^n)- (\tilde{\u}^{n+1}, \mu_\eta^n \nabla \eta^n )\big)\notag\\
&+ \frac{e^{n+1}_r}{\sqrt{E_1(\eta^n) + C_1}} \big( ( \mu_c^{n+1}, \u^n \cdot \nabla c^n )-(\mu_c^n, \u^n \cdot \nabla c^n )\big) \notag\\
&+ \frac{e^{n+1}_r}{\sqrt{E_1(\eta^n) + C_1}} \big(( \u^n, \mu_c^n \nabla c^n)- (\tilde{\u}^{n+1}, \mu_c^n \nabla c^n )\big)\notag\\
:=&\sum_{i=1}^{9}A_{i}.\notag
\end{align}

Using (\ref{tumor-45}), the H\"{o}lder inequality and Young's inequality, we have
\begin{align}
	&A_{1}+A_{3}+A_{4}+A_{5}\\
	\leq& C |R^{n+1}_r|^2+C|e^{n+1}_r|^2+C\tau^2+C\|e^n_\eta\|_{L^2}^2.\notag
\end{align}

It follows from (\ref{tumor-algorithm8}) that
\begin{align}
			\tilde{e}^{n+1}_\u
		=e^{n+1}_\u+\tau \nabla (e^{n+1}_p-e^n_p)+\tau \nabla (p(t_{n+1})-p(t_n)).
\end{align}
we can estimate $A_{6}+A_{7}$ by
\begin{align}
	A_{6}+A_{7}
	\leq& C|e^{n+1}_r| \|\mu_\eta^{n+1}-\mu_\eta^{n}\|_{L^2}\|\u^n\|_{L^4}\|\nabla \eta^n\|_{L^4}\notag\\
	&+C|e^{n+1}_r| \| \u^n-\tilde{\u}^{n+1}\|_{L^2}\|\mu_\eta^n\|_{L^3}\|\nabla \eta^n\|_{L^6}\notag\\
	\leq &C |e^{n+1}_r|^2+C \|e^{n+1}_\u\|^2_{L^2}+C \|e^n_\u\|^2_{L^2}+\epsilon_{\mu \eta}\|\nabla e^{n+1}_{\mu \eta}\|^2_{L^2} +\frac{1}{4}\|e^n_{\mu \eta}\|^2_{L^2} \notag\\
	&+C (\| \mu_{\eta t}\|^2_{H^1}+ \| \u_t \|^2_{L^2}) \tau^2+C\tau ^2 \|\nabla (e^{n+1}_p-e^n_p)\|^2_{L^2}
	+C \| p_t\|^2_{H^1} \tau^4.\notag
\end{align}
where we also use (\ref{tumor-eta-h2-induction}).

Similarly
\begin{align}
	A_{8}+A_{9}
	\leq &C |e^{n+1}_r|^2+C \|e^{n+1}_\u\|^2_{L^2}+C \|e^n_\u\|^2_{L^2}+\epsilon_{\mu c}\|\nabla e^{n+1}_{\mu c}\|^2_{L^2} +\frac{1}{4}\|e^n_{\mu c}\|^2_{L^2}\notag\\
	&+C (\| \mu_{c t}\|^2_{H^1}+ \| \u_t \|^2_{L^2}) \tau^2+C\tau ^2 \|\nabla (e^{n+1}_p-e^n_p)\|^2_{L^2}
	+C \| p_t\|^2_{H^1} \tau^4.\notag
\end{align}

Substituting the above inequalities into (\ref{tumor-r-error-equation}), we get (\ref{tumor-r-results}).
\end{proof}
\begin{lemma}
	Under the assumption (\ref{tumor-regularity}), we have
\begin{align}
		&\big( \frac{1+ \delta^{-1}}{2\tau} \big) \left( \| e_c^{n+1} \|_{L^2}^2 - \| e_c^n \|_{L^2}^2 + \| e_c^{n+1} - e_c^n \|_{L^2}^2 \right)\label{tumor-c-results}\\ 
		&+ \delta^{-1} \| \nabla e_c^{n+1} \|_{L^2}^2 + \| e_{\mu c}^{n+1} \|_{L^2}^2+  \| \nabla e_{\mu c}^{n+1} \|_{L^2}^2  +\|\sqrt{P(\eta^n)}e^{n+1}_{\mu c}\|^2_{L^2} \notag\\
		\leq &C \big(\| R_c^{n+1} \|_{L^2}^2 +\|e^{n+1}_c\|^2_{L^2} +\|e^{n}_\eta\|^2_{L^2}+\|e^{n}_c\|^2_{L^2}+\|\nabla e^{n}_\eta\|^2_{L^2}+\|e^n_\u\|^2_{L^2}+| e_r^{n+1} |^2\big)\notag\\
		&+C\tau^2+\epsilon_{\mu \eta} \| \nabla e^{n+1}_{\mu \eta}\|^2_{L^2}+\epsilon_{\mu c} \| \nabla e^{n+1}_{\mu c}\|^2_{L^2}.\notag
\end{align}
\end{lemma}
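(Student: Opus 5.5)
The proof follows the template of the $\eta$-lemma. First, evaluate \eqref{tumor-continuous4}--\eqref{tumor-continuous5} at $t=t_{n+1}$ with $P(\eta(t_n))$ in the reaction term, and define the truncation error
\[
R_c^{n+1}=\frac{c(t_{n+1})-c(t_n)}{\tau}-c_t(t_{n+1})+\bigl(P(\eta(t_n))-P(\eta(t_{n+1}))\bigr)\bigl(\mu_\eta(t_{n+1})-\mu_c(t_{n+1})\bigr).
\]
By the Taylor remainder, the Lipschitz bound in \textbf{A2} and \eqref{tumor-regularity}, $\|R_c^{n+1}\|_{L^2}\le C\tau^{1/2}$. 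Subtracting from \eqref{tumor-algorithm4}--\eqref{tumor-algorithm5} gives an error equation
\[
\frac{e_c^{n+1}-e_c^n}{\tau}-\Delta e_{\mu c}^{n+1}=F_c,
\]
where $F_c$ collects $-R_c^{n+1}$, $P(\eta^n)(e_{\mu\eta}^{n+1}-e_{\mu c}^{n+1})$, $(P(\eta^n)-P(\eta(t_n)))(\mu_\eta-\mu_c)(t_{n+1})$, and the convection differences written in divergence form exactly as in $F_\eta$, with $\eta$ replaced by $c$. The chemical-potential error is
\[
e_{\mu c}^{n+1}=\delta^{-1}e_c^{n+1}-\chi_0e_\eta^n-\chi_0\bigl(\eta(t_n)-\eta(t_{n+1})\bigr),
\]
so the last term is $O(\tau)$ in $H^1$.

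Next, test the error equation with $e_{\mu c}^{n+1}$ and with $e_c^{n+1}$, and test the $\mu_c$-relation with $e_{\mu c}^{n+1}$ and with $(e_c^{n+1}-e_c^n)/\tau$. The combinations
\[
\Bigl(\tfrac{e_c^{n+1}-e_c^n}{\tau},\,e_{\mu c}^{n+1}\Bigr)\ \text{paired with}\ \Bigl(e_{\mu c}^{n+1},\,\tfrac{e_c^{n+1}-e_c^n}{\tau}\Bigr),
\]
together with the relation $(e_c^{n+1}-e_c^n,e_c^{n+1})$, produce the telescoping term with coefficient $\frac{1+\delta^{-1}}{2\tau}$ via $2(a-b,a)=|a|^2-|b|^2+|a-b|^2$. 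The Laplacian tested with $e_{\mu c}^{n+1}$ gives $\|\nabla e_{\mu c}^{n+1}\|^2$. Testing with $e_c^{n+1}$ gives $(\nabla e_{\mu c}^{n+1},\nabla e_c^{n+1})$. Substituting $\nabla e_{\mu c}^{n+1}=\delta^{-1}\nabla e_c^{n+1}-\chi_0\nabla e_\eta^n+O(\tau)$ yields $\delta^{-1}\|\nabla e_c^{n+1}\|^2$ minus terms bounded by $C\|\nabla e_\eta^n\|^2+C\tau^2$. The $\|e_{\mu c}^{n+1}\|^2$ term comes from testing the $\mu_c$-relation with $e_{\mu c}^{n+1}$, since its right-hand side is bounded by $C(\|e_c^{n+1}\|^2+\|e_\eta^n\|^2+\tau^2)+\frac12\|e_{\mu c}^{n+1}\|^2$. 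Finally, the reaction term $(P(\eta^n)(e_{\mu\eta}^{n+1}-e_{\mu c}^{n+1}),e_{\mu c}^{n+1})$ has the right sign to yield $\|\sqrt{P(\eta^n)}e_{\mu c}^{n+1}\|^2$ on the left. The cross term $(P e_{\mu\eta}^{n+1},e_{\mu c}^{n+1})$ will be absorbed, or cancelled once combined with the $\eta$-lemma, using \eqref{tumor-p-eta-infty}.

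The remaining $F_c$ terms are treated as in \eqref{tumor-A1}. Integrate the divergence terms by parts onto $e_{\mu c}^{n+1}$ or $e_c^{n+1}$. Bound $\u(t_n),c(t_n)$ in $L^\infty$ by the $H^2$ regularity. For $e_\u^n c^n$, use $\|c^n\|_{L^\infty}\le C$ from the induction hypothesis \eqref{tumor-eta-h2-induction}. Split the SAV factor difference via the identity displayed after \eqref{tumor-A1}, which gives $|e_r^{n+1}|+C\|e_\eta^n\|$. Young's inequality then leaves $\epsilon_{\mu c}\|\nabla e_{\mu c}^{n+1}\|^2$ plus the listed lower-order terms. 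The Lipschitz term in $P$ contributes $\|e_\eta^n\|^2$ via the $L^3$--$L^6$ H\"older bound as before.

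The main obstacle I expect is the reaction coupling with $P(\eta^n)$. Tested against $e_c^{n+1}$, it produces $\|P\|_\infty\|e_{\mu\eta}^{n+1}-e_{\mu c}^{n+1}\|\|e_c^{n+1}\|$, whose $e_{\mu\eta}^{n+1}$ part is not controlled inside this lemma. I would bound it by $\frac14\|e_{\mu\eta}^{n+1}\|^2+C\|e_c^{n+1}\|^2$ and rely on the $\|e_{\mu\eta}^{n+1}\|^2$ dissipation from \eqref{tumor-eta-results} when the lemmas are summed, keeping the $L^2$ contributions of $e_{\mu\eta}$ small. If that bookkeeping fails, I would test only with $e_{\mu c}^{n+1}$ and recover the $\|e_c\|^2$ telescoping solely from the $\mu_c$-relation. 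This would avoid the problematic pairing at the cost of a modified constant in front of the telescoping term.
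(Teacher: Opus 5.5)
Your plan follows the paper's proof. You use the same truncation error; your sign on the $P(\eta(t_n))-P(\eta(t_{n+1}))$ part is the one from the $\eta$-equation, which is harmless since only $\|R_c^{n+1}\|_{L^2}$ enters. You use the same error equations \eqref{tumor-c-error-eqaution}--\eqref{tumor-muc-error-equation} and the same four test functions, and you treat the convection, SAV-factor and Lipschitz terms the same way. The one step you never address is what remains after testing the $\mu_c$-relation with $(e_c^{n+1}-e_c^n)/\tau$, namely the paper's $\Lambda_{11}$ and $\Lambda_{12}$:
\[
\chi_0\Bigl(e_\eta^n,\frac{e_c^{n+1}-e_c^n}{\tau}\Bigr)\quad\text{and}\quad -\chi_0\Bigl(\eta(t_{n+1})-\eta(t_n),\frac{e_c^{n+1}-e_c^n}{\tau}\Bigr).
\]
These are not $F_c$ terms, and Young's inequality against $\frac{1+\delta^{-1}}{2\tau}\|e_c^{n+1}-e_c^n\|^2$ fails for both.
\begin{itemize}
\item The first leaves $\frac{C}{\tau}\|e_\eta^n\|^2$. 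After multiplying by $\tau$ its coefficient is $O(1)$ rather than $O(\tau)$, so the discrete Gronwall argument gives a factor growing like $e^{C/\tau}$.
\item The second leaves $\frac{C}{\tau}\|\eta(t_{n+1})-\eta(t_n)\|^2$, which only yields $O(\tau)$ instead of $O(\tau^2)$ in \eqref{tumor-main-results}.
\end{itemize}
The missing idea is to eliminate the difference quotient using the error equation itself, $\frac{e_c^{n+1}-e_c^n}{\tau}=\Delta e_{\mu c}^{n+1}+(\text{rest of }F_c)$, and integrate by parts. This gives $\chi_0(e_\eta^n,\Delta e_{\mu c}^{n+1})=-\chi_0(\nabla e_\eta^n,\nabla e_{\mu c}^{n+1})\le C\|\nabla e_\eta^n\|^2+\epsilon_{\mu c}\|\nabla e_{\mu c}^{n+1}\|^2$. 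The analogous term with $\nabla(\eta(t_{n+1})-\eta(t_n))$ gives $C\tau^2$ after summation, and the remaining pieces of $F_c$ are bounded like $\Lambda_1$--$\Lambda_4$.

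Your fallback does not avoid this. Testing only with $e_{\mu c}^{n+1}$ and inserting the $\mu_c$-relation into $\bigl(\frac{e_c^{n+1}-e_c^n}{\tau},e_{\mu c}^{n+1}\bigr)$ produces exactly the same two pairings. Once the lemmas are summed there is a cleaner route. The cross term $\chi_0(e_c^{n+1},\frac{e_\eta^{n+1}-e_\eta^n}{\tau})$ produced by testing \eqref{tumor-16} with the difference quotient combines with the first term above into $\frac{\chi_0}{\tau}[(e_c^{n+1},e_\eta^{n+1})-(e_c^n,e_\eta^n)]$, as in the energy-stability proof. The final $\chi_0(e_c^N,e_\eta^N)$ is then absorbed because $\lambda=4\delta\chi_0^2$. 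That route, however, does not give this lemma on its own.

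Your worry about $e_{\mu\eta}^{n+1}$ is justified, and the paper has the same issue. Its bounds for $\Lambda_5$ in \eqref{tumor-50} and for $\Lambda_{15}$ also produce $\epsilon_{\mu\eta}\|e_{\mu\eta}^{n+1}\|_{L^2}^2$. That term is not on the right-hand side of the statement and is only absorbed by the dissipation in \eqref{tumor-eta-results} in the combined estimate. Two corrections to your bookkeeping:
\begin{itemize}
\item Use an arbitrarily small $\epsilon_{\mu\eta}$ rather than $\frac14$, since the $r$- and $\u$-estimates already use up half of that dissipation.
\item The two copies of $(P(\eta^n)e_{\mu\eta}^{n+1},e_{\mu c}^{n+1})$ from the $\eta$- and $c$-lemmas have the same sign, so they do not cancel. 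They are absorbed through $\|\sqrt{P(\eta^n)}(e_{\mu\eta}^{n+1}-e_{\mu c}^{n+1})\|^2\ge 0$, which consumes the $\|\sqrt{P(\eta^n)}e_{\mu c}^{n+1}\|^2$ the lemma keeps on the left. Inside the lemma you must instead bound the cross term as in \eqref{tumor-50}, controlling $\|e_{\mu c}^{n+1}\|$ through \eqref{tumor-muc-error-equation}.
\end{itemize}
To get the statement literally, convert $\|e_{\mu\eta}^{n+1}\|^2$ as follows. With exact initial data, \eqref{tumor-mass} and its continuous analogue give $\int_\Omega e_\eta^{n+1}=-\int_\Omega e_c^{n+1}$, and $\int_\Omega\Delta e_\eta^{n+1}=0$. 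So by \eqref{tumor-16} the mean of $e_{\mu\eta}^{n+1}$ is at most $C(\|e_c^{n+1}\|+|e_r^{n+1}|+\|e_\eta^n\|+\tau)$. Poincar\'e--Wirtinger then gives $\|e_{\mu\eta}^{n+1}\|^2\le C\|\nabla e_{\mu\eta}^{n+1}\|^2+C(\|e_c^{n+1}\|^2+|e_r^{n+1}|^2+\|e_\eta^n\|^2+\tau^2)$, and every term on the right is admissible in the lemma.
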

\begin{proof}
Equation (\ref{tumor-continuous4})--(\ref{tumor-continuous5}) at $t=t_{n+1}$ satisfy
\begin{align}
		&\frac{c(t_{n+1}) - c(t_n)}{\tau} +\frac{r(t_{n+1})}{\sqrt{E_1(\eta(t_{n+1})) + C_1}} \u(t_{n+1}) \cdot \nabla c(t_{n+1})- \Delta\mu_c(t_{n+1})\label{tumor-continuous4-1}\\
		&- P(\eta(t_n)) (\mu_\eta(t_{n+1}) - \mu_c(t_{n+1})) =R_c^{n+1}.\notag\\
	&\mu_c(t_{n+1})= \delta^{-1} c(t_{n+1}) -\chi_0 \eta(t_{n+1})\label{tumor-continuous5-1}.
\end{align}
where
\begin{align}
	R_c^{n+1}&=\frac{c(t_{n+1}) - c(t_n)}{\tau} - c_t(t_{n+1}) \\
	&-P(\eta(t_n)) (\mu_\eta(t_{n+1}) - \mu_c(t_{n+1}))+P(\eta(t_{n+1})) (\mu_\eta(t_{n+1}) - \mu_c(t_{n+1})).\notag
\end{align}

By the assumption (\ref{tumor-regularity}),we have
\begin{align*}
		\|R_c^{n+1}\|_{L^2}&\leq \| \frac{1}{\tau} \int_{t_n}^{t_{n+1}} (t_n - t) \frac{\partial^2 c}{\partial t^2} dt \|_{L^2} +C \| \eta(t_{n+1}) - \eta(t_n) \|_{L^4} (\| \mu_\eta \|_{L^4} + \| \mu_c \|_{L^4}) \notag\\\
		&\leq C \tau^{\frac{1}{2}}.\notag
\end{align*}

Subtracting (\ref{tumor-continuous4-1})--(\ref{tumor-continuous5-1}) from (\ref{tumor-algorithm4})--(\ref{tumor-algorithm5}), we have the following error equations
\begin{align}
			\frac{e_c^{n+1} - e_c^n}{\tau} & -\Delta e^{n+1}_{\mu c}+\frac{r(t_{n+1})}{\sqrt{E_1(\eta(t_{n+1}))+C_1}}  \nabla \cdot \big( \u(t_{n+1})c(t_{n+1}) - \u(t_n)c(t_n) \big)\notag \\
			& \quad+\big( \frac{r(t_{n+1})}{\sqrt{E_1(\eta(t_{n+1}))+C_1}} - \frac{r^{n+1}}{\sqrt{E_1(\eta^n)+C_1}}\big)  \nabla \cdot (\u(t_n)c(t_n))\notag \\
			&\quad- \frac{r^{n+1}}{\sqrt{E_1(\eta^n)+C_1}} \nabla \cdot (\u(t_n)e_c^n)
			- \frac{r^{n+1}}{\sqrt{E_1(\eta^n)+C_1}} \nabla \cdot (e_\u^n c^n)\label{tumor-c-error-eqaution}\\
			&\quad- P(\eta^n) (e_{\mu\eta}^{n+1} - e_{\mu c}^{n+1})- \big(P(\eta^n) - P(\eta(t_n))\big) (\mu_\eta(t_{n+1}) - \mu_c(t_{n+1})) \notag\\
            &= -R_c^{n+1}.\notag\\
		e_{\mu c}^{n+1} &= \delta^{-1} e_c^{n+1} - \chi_0 e_\eta^n + \chi_0 (\eta(t_{n+1}) - \eta(t_n)).\label{tumor-muc-error-equation}
\end{align}

Taking the inner product of \eqref{tumor-c-error-eqaution} with $e^{n+1}_{\mu c}$, and \eqref{tumor-muc-error-equation} with $e^{n+1}_{\mu c}$ and $\frac{e^{n+1}_c - e^n_c}{\tau}$, respectively. 
By coupling \eqref{tumor-c-error-eqaution} and \eqref{tumor-muc-error-equation}, we further test the resulting system against $e^{n+1}_c$.
Substituting these error equations, we have the following error equation
\begin{align}
		&\quad\big( \frac{1 + \delta^{-1}}{2\tau} \big) \left( \| e_c^{n+1} \|_{L^2}^2 - \| e_c^n \|_{L^2}^2 + \| e_c^{n+1} - e_c^n \|_{L^2}^2 \right) \label{tumor-c-error-identify}\\
		&\quad+ \delta^{-1} \| \nabla e_c^{n+1} \|_{L^2}^2 + \| e_{\mu c}^{n+1} \|_{L^2}^2+  \| \nabla e_{\mu c}^{n+1} \|_{L^2}^2  \notag\\
		&=\frac{r(t_{n+1})}{\sqrt{E_1(\eta(t_{n+1}))+C_1}} \big( \nabla \cdot (\u(t_{n+1})c(t_{n+1}) - \u(t_n)c(t_n)) , e^{n+1}_{\mu c}+e^{n+1}_c\big)\notag \\
		& \quad+ \big(\frac{r(t_{n+1})}{\sqrt{E_1(\eta(t_{n+1}))+C_1}} - \frac{r^{n+1}}{\sqrt{E_1(\eta^n)+C_1}}\big) \big( \nabla \cdot (\u(t_n)c(t_n)),e^{n+1}_{\mu c}+e^{n+1}_c \big) \notag\\
		&\quad- \frac{r^{n+1}}{\sqrt{E_1(\eta^n)+C_1}} \big( \nabla \cdot (\u(t_n)e_c^n),e^{n+1}_{\mu c}+e^{n+1}_c \big)\notag\\
		&\quad- \frac{r^{n+1}}{\sqrt{E_1(\eta^n)+C_1}} \big(\nabla \cdot (e_\u^n c^n),e^{n+1}_{\mu c}+e^{n+1}_c \big)\notag\\
		& \quad+ (P(\eta^n)(e_{\mu \eta}^{n+1} - e_{\mu c}^{n+1}), e_{\mu c}^{n+1})- (R_c^{n+1}, e_{\mu c}^{n+1}+e^{n+1}_c)  \notag\\
		&\quad + ((P(\eta^n) - P(\eta(t_n)))(\mu_\eta(t_{n+1}) - \mu_c(t_{n+1})), e_{\mu c}^{n+1}+e^{n+1}_c) \notag\\
		&\quad + \delta^{-1} (e_c^{n+1}, e_{\mu c}^{n+1}) - \chi_0 (e_\eta^n, e_{\mu c}^{n+1}) + \chi_0 (\eta(t_{n+1}) - \eta(t_n), e_{\mu c}^{n+1})\notag \\
		&\quad +\chi_0 \big( e_\eta^n, \frac{e_c^{n+1} - e_c^n}{\tau} \big) - \chi_0 \big( \eta(t_{n+1}) - \eta(t_n), \frac{e_c^{n+1} - e_c^n}{\tau} \big) \notag\\
		&\quad + \chi_0 (\nabla e_\eta^n, \nabla e_c^{n+1}) - \chi_0 (\nabla (\eta(t_{n+1}) - \eta(t_n)), \nabla e_c^{n+1})\notag \\
		&\quad + (P(\eta^n)(e_{\mu \eta}^{n+1} - e_{\mu c}^{n+1}), e_c^{n+1}) \notag\\
	&:=\sum_{i=1}^{15}\Lambda_{i}.\notag
\end{align}

Similar to (\ref{tumor-A1}), we get
\begin{align}
	\Lambda_1 & \le C \| e_{\mu c}^{n+1}+e^{n+1}_c \|_{L^2} \left( \| \u(t_{n+1}) \|_{L^\infty} \| c_t \|_{L^2} \tau + \| c(t_n) \|_{L^\infty} \| \u_t \|_{L^2} \tau \right) \\
	& \le \epsilon_{\mu c}  \| \nabla e_{\mu c}^{n+1} \|_{L^2}^2 +\epsilon_{c}  \| \nabla e_{c}^{n+1} \|_{L^2}^2 + C \tau^2,\notag\\
	\Lambda_2 & \le C \| e_{\mu c}^{n+1}+e^{n+1}_c \|_{L^2} | e_r^{n+1} | + C \| e_{\mu c}^{n+1}+e^{n+1}_c  \|_{L^2} \left( \| e_\eta^n \|_{L^2} + \| \eta_t \|_{L^2} \tau \right) \\
	& \le \epsilon_{\mu c}\| \nabla e_{\mu c}^{n+1} \|_{L^2}^2 +\epsilon_{c}  \| \nabla e_{c}^{n+1} \|_{L^2}^2 + C | e_r^{n+1} |^2 + C \| e_\eta^n \|_{L^2}^2 + C \tau^2.\notag
\end{align}

By using integration by parts, we can deduce that
\begin{align}
	\Lambda_3 +\Lambda_4 & \le C \| \nabla( e_{\mu c}^{n+1}+e^{n+1}_c) \|_{L^2} \| \u(t_n) \|_{L^\infty} \| e_c^n \|_{L^2}\\
    &\quad+ C \| \nabla( e_{\mu c}^{n+1} +e^{n+1}_c)\|_{L^2} \| e_\u^n \|_{L^2} \notag\\
	& \le \epsilon_{\mu c} \| \nabla e_{\mu c}^{n+1} \|_{L^2}^2 +\epsilon_{c}  \| \nabla e_{c}^{n+1} \|_{L^2}^2 + C \| e_c^n \|_{L^2}^2 + C \| e_\u^n \|_{L^2}^2.\notag
\end{align}

By (\ref{tumor-regularity-L2}) and (\ref{tumor-muc-error-equation}), together with the H\"{o}lder and Young inequalities, we obtain
\begin{align}
	\Lambda_{5}=&-\| \sqrt{P(\eta^n)}e^{n+1}_{\mu c}\|^2_{L^2}+\big(P(\eta^n) e^{n+1}_{\mu \eta}, e^{n+1}_{\mu c}\big)\label{tumor-50}\\
	\leq & -\| \sqrt{P(\eta^n)}e^{n+1}_{\mu c}\|^2_{L^2}+C \| e_c^{n+1}\|_{L^2}^2+C\| e^n_\eta\|^2_{L^2}\notag\\
    &+C \| \eta_t\|_{H^1}^2 \tau^2 +\epsilon_{\mu \eta} \| e^{n+1}_{\mu\eta}\|^2_{L^2}.\notag
	\end{align}
	and
	\begin{align}
	&\Lambda_{6}+\Lambda_{7}+\Lambda_{8}+\Lambda_{9}+\Lambda_{10}\\
	\leq &\epsilon_{c} \|\nabla e_c^{n+1}\|_{L^2}^2 + C \|e_\u^n\|_{L^2}^2+ C \| \nabla e_\eta^n\|_{L^2}^2 + \epsilon_{\mu c} \|\nabla e_{\mu c}^{n+1}\|_{L^2}^2\notag\\
	&+C \|R_c^{n+1}\|_{L^2}^2 +C \|e_c^{n+1}\|_{L^2}^2+C \|e_\eta^{n}\|_{L^2}^2 + C \|\eta_t\|_{L^2}^2 \tau^2.\notag
\end{align}

From (\ref{tumor-c-error-eqaution}),  In terms of (\ref{tumor-regularity}), (\ref{tumor-p-eta-infty}) and integration by parts, adapting the same technique for $\Lambda_{i}, i=1,2,3,4$, we can deduce that
\begin{align}
\Lambda_{11}+	\Lambda_{12}
		\leq	& C\|e^n_\eta\|^2_{L^2}+C\|e^n_\u\|^2_{L^2}+C\| \nabla e_\eta^n\|_{L^2}^2+C\|R^{n+1}_c\|^2_{L^2}\notag\\
		&+\epsilon_c \|\nabla e^{n+1}_c\|^2_{L^2}+\epsilon_{\mu c}\|\nabla e^{n+1}_{\mu c}\|^2_{L^2}+\epsilon_{\mu \eta}\|\nabla e^{n+1}_{\mu \eta}\|^2_{L^2}\notag\\
		&+C \| \eta_t \|_{H^1}^2 \tau^2+C | e_r^{n+1} |^2+ C \| e_c^n \|_{L^2}^2
\end{align}

By using (\ref{tumor-p-eta-infty}), H\"{o}lder inequality and Young inequality, one has
\begin{align}
	&\Lambda_{13} +	\Lambda_{14}+\Lambda_{15}\\
	 \leq &C \|\nabla e_\eta^n\|_{L^2}^2 + \epsilon_c \|\nabla e_c^{n+1}\|_{L^2}^2 + C \|\eta_t\|_{H^1}^2 \tau^2+ C\|e^n_\eta\|^2_{L^2}\notag\\
	 &+C\|e^{n+1}_c\|^2_{L^2}+\epsilon_{\mu \eta} \|e_{\mu \eta}^{n+1}\|_{L^2}^2 + \epsilon_{\mu c} \|e_{\mu c}^{n+1}\|_{L^2}^2.\notag
\end{align}

Substituting the above inequalities into (\ref{tumor-c-error-identify}), we arrive at (\ref{tumor-c-results}).
%
\end{proof}

By the same technique in \cite{shenjie2022}, we can get the following two lemmas.
\begin{lemma}
	Under the assumption (\ref{tumor-regularity}), we have
\begin{align}
		&\frac{1}{2\tau}(\| e^{n+1}_\u\|^2_{L^2}-\|e^n_\u\|^2_{L^2} + \| \tilde{e}^{n+1}_\u-e^n_\u\|^2_{L^2})\label{tumor-u-results}\\
		&+\| \nabla \tilde{e}^{n+1}_\u\|^2_{L^2}+\frac{\tau}{2} (\|\nabla e_{p}^{n+1}\|^{2} - \|\nabla e_{p}^{n}\|^{2})\notag\\
		\leq&-\exp \left(\frac{t_{n+1}}{T}\right) e_{q}^{n+1}\left(\u^{n} \cdot \nabla \u^{n}, \tilde{e}_{\mathbf{u}}^{n+1}\right)+  \epsilon_\u\left\|\nabla \tilde{e}_{\mathbf{u}}^{n+1}\right\|_{L^{2}}^{2}+C \tau^{2}\notag\\
		&+C\left\|e_{\mathbf{u}}^{n}\right\|_{L^{2}}^{2} +C\left\|\nabla e_{\mathbf{u}}^{n}\right\|_{L^{2}}^{2}\left\|e_{\mathbf{u}}^{n}\right\|_{L^{2}}^{2}+\frac{1}{4}\left\| e_{\mu \eta}^{n}\right\|_{L^{2}}^{2} +\frac{1}{2}\left\|  \nabla e_{\mu \eta}^{n}\right\|_{L^{2}}^{2} \notag\\
		&+\frac{1}{4}\left\| e_{\mu c}^{n}\right\|_{L^{2}}^{2} +\frac{1}{2}\left\|  \nabla e_{\mu c}^{n}\right\|_{L^{2}}^{2}  +C\|\nabla e_{\eta}^{n}\|_{L^{2}}^{2} \notag\\
		&+ C|e_{r}^{n+1}|^{2}+C\tau^2(\|\nabla e^{n+1}_p\|^2_{L^2}+\|\nabla e_{p}^{n}\|^2_{L^2})+C\| R^{n+1}_\u\|^2_{L^2}.\notag
\end{align}
\end{lemma}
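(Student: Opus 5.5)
We follow the projection-method argument of \cite{shenjie2022}. First we write the momentum equation \eqref{tumor-continuous6} at $t=t_{n+1}$ in the split form
\begin{align*}
&\frac{\tilde{\u}(t_{n+1})-\u(t_n)}{\tau}+\u(t_{n+1})\cdot\nabla\u(t_{n+1})-\Delta\u(t_{n+1})+\nabla p(t_n)\\
&\qquad=\frac{r(t_{n+1})}{\sqrt{E_1(\eta(t_{n+1}))+C_1}}\big(\mu_\eta\nabla\eta+\mu_c\nabla c\big)(t_{n+1})+R^{n+1}_\u ,
\end{align*}
with $\tilde{\u}(t_{n+1})=\u(t_{n+1})$. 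The truncation error $R^{n+1}_\u$ collects the time-difference error $\frac{1}{\tau}\int_{t_n}^{t_{n+1}}(t_n-t)\u_{tt}\,dt$ and $\nabla(p(t_{n+1})-p(t_n))$. By \eqref{tumor-regularity}, $\tau\sum_n\|R^{n+1}_\u\|^2_{L^2}\le C\tau^2$.

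Subtracting this from \eqref{tumor-algorithm6} gives an error equation for $\tilde e^{n+1}_\u$. We test it with $\tilde e^{n+1}_\u$ and handle the projection step \eqref{tumor-algorithm8} exactly as in \eqref{tumor-11}. Concretely, we write $e^{n+1}_\u+\tau\nabla e^{n+1}_p=\tilde e^{n+1}_\u+\tau\nabla e^n_p+\tau\nabla(p(t_{n+1})-p(t_n))$, square both sides, and use $(\nabla e^{n+1}_p,e^{n+1}_\u)=0$. This produces the left-hand side of \eqref{tumor-u-results}, namely $\frac{1}{2\tau}(\|e^{n+1}_\u\|^2-\|e^n_\u\|^2+\|\tilde e^{n+1}_\u-e^n_\u\|^2)$, the term $\|\nabla\tilde e^{n+1}_\u\|^2$, and the pressure difference $\frac{\tau}{2}(\|\nabla e^{n+1}_p\|^2-\|\nabla e^n_p\|^2)$. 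The cross terms involving $\nabla(p(t_{n+1})-p(t_n))$ are bounded by $C\tau^2\|p_t\|^2_{H^1}$ together with $C\tau^2(\|\nabla e^{n+1}_p\|^2+\|\nabla e^n_p\|^2)$.

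The convection error is split as
\[
\xi_2^{n+1}\u^n\cdot\nabla\u^n-\u(t_{n+1})\cdot\nabla\u(t_{n+1})=\exp(\tfrac{t_{n+1}}{T})e^{n+1}_q\,\u^n\cdot\nabla\u^n+\big(\u^n\cdot\nabla\u^n-\u(t_{n+1})\cdot\nabla\u(t_{n+1})\big).
\]
We keep the first piece, tested with $\tilde e^{n+1}_\u$, on the right-hand side unestimated. It is designed to cancel with the corresponding term from the $q$-error equation. We then expand the second piece as
\[
e^n_\u\cdot\nabla\u^n+\u(t_n)\cdot\nabla e^n_\u+O(\tau\,\u_t).
\]
The term $(\u(t_n)\cdot\nabla e^n_\u,\tilde e^{n+1}_\u)$ is rewritten as $-(\u(t_n)\cdot\nabla\tilde e^{n+1}_\u,e^n_\u)$, which is bounded by $\epsilon_\u\|\nabla\tilde e^{n+1}_\u\|^2+C\|e^n_\u\|^2$.

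The main obstacle is $(e^n_\u\cdot\nabla\u^n,\tilde e^{n+1}_\u)$, because $\u^n$ is controlled only in $H^1$ through the induction hypothesis \eqref{tumor-eta-h2-induction}. We write $\u^n=\u(t_n)+e^n_\u$. The $\u(t_n)$ part is harmless because $\u(t_n)\in H^2$. For the remaining quadratic term $(e^n_\u\cdot\nabla e^n_\u,\tilde e^{n+1}_\u)=-(e^n_\u\cdot\nabla\tilde e^{n+1}_\u,e^n_\u)$, we apply Ladyzhenskaya's inequality $\|e^n_\u\|_{L^4}^2\le C\|e^n_\u\|\|\nabla e^n_\u\|$ in 2D. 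Young's inequality then gives $\epsilon_\u\|\nabla\tilde e^{n+1}_\u\|^2+C\|\nabla e^n_\u\|^2\|e^n_\u\|^2$, which is exactly the nonlinear term appearing in \eqref{tumor-u-results}. That term will be absorbed later by a discrete Gronwall argument, using $\|\nabla e^n_\u\|\le C$.

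Finally, the forcing error
\[
\xi_1^{n+1}(\mu^n_\eta\nabla\eta^n+\mu^n_c\nabla c^n)-\frac{r}{\sqrt{E_1+C_1}}(\mu_\eta\nabla\eta+\mu_c\nabla c)(t_{n+1})
\]
is decomposed into three groups: an $e^{n+1}_r$ part, a part coming from the difference of the $E_1$ denominators (bounded by $\|e^n_\eta\|$), and the parts $e^n_{\mu\eta}\nabla\eta^n+\mu_\eta(t_n)\nabla e^n_\eta$ together with their $c$-analogues plus $O(\tau)$. For $(e^n_{\mu\eta}\nabla\eta^n,\tilde e^{n+1}_\u)$ we use $\|\nabla\eta^n\|_{L^4}\le C$, which follows from \eqref{tumor-eta-h2-induction}, together with $\|e^n_{\mu\eta}\|_{L^4}\le C\|e^n_{\mu\eta}\|_{H^1}$. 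This yields the bound $\frac14\|e^n_{\mu\eta}\|^2+\frac12\|\nabla e^n_{\mu\eta}\|^2+\epsilon_\u\|\nabla\tilde e^{n+1}_\u\|^2$ up to the appropriate constants; the $e^n_{\mu c}$ term is treated in the same way. Collecting all these estimates gives \eqref{tumor-u-results}.
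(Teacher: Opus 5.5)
Your overall route is the one the paper intends: it simply defers to the argument of \cite{shenjie2022}. Most of your steps are sound.
\begin{itemize}
\item The projection identity together with $(e^{n+1}_\u,\nabla(p(t_{n+1})-p(t_n)))=0$ does give the pressure cross terms you claim.
\item The skew-symmetry manipulations are legitimate, because $e^n_\u$ is solenoidal with vanishing normal trace and $\tilde e^{n+1}_\u$ vanishes on $\partial\Omega$.
\item For Ladyzhenskaya, use the version $\|v\|_{L^4}^2\le C\|v\|_{L^2}\|v\|_{H^1}$, since $e^n_\u\notin H^1_0$. The extra term this produces is harmless.
\end{itemize}

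The genuine gap is the coupling term $(e^n_{\mu\eta}\nabla\eta^n,\tilde e^{n+1}_\u)$, and its $e^n_{\mu c}$ analogue, which you close with ``up to the appropriate constants''. Your H\"older bound is $C_*\|e^n_{\mu\eta}\|_{H^1}\|\tilde e^{n+1}_\u\|_{L^2}$. Here $C_*$ depends on $\|\nabla\eta^n\|_{L^4}$ and Sobolev constants, so it is not small. Young's inequality cannot make both weights small, because their product is at least $C_*^2/4$.
\begin{itemize}
\item If the chemical-potential error carries $\frac{1}{4}$, then $\|\tilde e^{n+1}_\u\|^2$ carries $C_*^2$. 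After Poincar\'e this becomes $C_*^2C_P^2\|\nabla\tilde e^{n+1}_\u\|^2$, which the unit coefficient on the left cannot absorb.
\item If instead you insist on $\epsilon_\u\|\nabla\tilde e^{n+1}_\u\|^2$, then $\|e^n_{\mu\eta}\|_{H^1}^2$ carries a large constant. The final Gronwall argument then fails: after shifting the index, this term can only be absorbed by the unit coefficients of $\|e^{n+1}_{\mu\eta}\|^2+\|\nabla e^{n+1}_{\mu\eta}\|^2$ in the $\eta$-lemma.
\end{itemize}
This is exactly why the statement carries the explicit constants $\frac{1}{4}$ and $\frac{1}{2}$.

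The missing idea is to leave the large constant on the $L^2$ quantity $C_*^2\|\tilde e^{n+1}_\u\|^2$ and never pass to the gradient. There are two ways to finish.
\begin{itemize}
\item Write $\|\tilde e^{n+1}_\u\|^2\le 2\|\tilde e^{n+1}_\u-e^n_\u\|^2+2\|e^n_\u\|^2$, and absorb the first part into $\frac{1}{2\tau}\|\tilde e^{n+1}_\u-e^n_\u\|^2$ for $\tau\le\tau_0$.
\item Use the projection relation $\tilde e^{n+1}_\u=e^{n+1}_\u+\tau\nabla(e^{n+1}_p-e^n_p)+\tau\nabla(p(t_{n+1})-p(t_n))$ together with $L^2$-orthogonality. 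This gives $C\|e^{n+1}_\u\|^2+C\tau^2(\|\nabla e^{n+1}_p\|^2+\|\nabla e^n_p\|^2)$ plus a consistency term. This is where the pressure terms in the statement come from.
\end{itemize}
Both variants are admissible in the discrete Gronwall step for small $\tau$.

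The same care is needed for $\mu_c(t_n)\nabla e^n_c$, the $c$-analogue of $\mu_\eta(t_n)\nabla e^n_\eta$. Bounding it by $C\|\nabla e^n_c\|^2$ does not work: $\|\nabla e_c\|^2$ is not part of the error energy, and the dissipation $\delta^{-1}\|\nabla e^{n+1}_c\|^2$ only absorbs small multiples. Integrate by parts onto $\tilde e^{n+1}_\u$ instead, using $\mu_c(t_n)\in H^2$, to obtain $\epsilon_\u\|\nabla\tilde e^{n+1}_\u\|^2+C\|e^n_c\|^2$.

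Finally, your $E_1$-denominator term and this integration by parts produce $C\|e^n_\eta\|^2+C\|e^n_c\|^2$, which are absent from the stated right-hand side. These terms are unavoidable, since $e^n_\eta$ need not have zero mean, but they are harmless in the combined Gronwall argument. You should say explicitly that you prove the estimate with them included.
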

\begin{lemma}
	Under the assumption (\ref{tumor-regularity}), we have
\begin{align}
		& \frac{1}{2\tau} \left(|e_{q}^{n+1}|^{2} - |e_{q}^{n}|^{2} + |e_{q}^{n+1}-e_{q}^{n}|^{2}\right) + \frac{1}{T} |e_{q}^{n+1}|^{2} \label{tumor-q-results}\\
			\leq & \exp \left(\frac{t_{n+1}}{T}\right) e_{q}^{n+1} (\u^{n} \cdot \nabla \u^{n}, \tilde{e}_{\mathbf{u}}^{n+1}) + C\|e_{\mathbf{u}}^{n}\|_{L^{2}}^{2} + C \tau^{2} + C|R_{q}^{n+1}|^{2}.\notag
	\end{align}
\end{lemma}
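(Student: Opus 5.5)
We prove the $q$-error estimate \eqref{tumor-q-results} by the same truncation-and-test argument used for $e_r$ in \eqref{tumor-r-results}. First we write \eqref{tumor-continuous8} at $t=t_{n+1}$ as
\begin{align*}
\frac{q(t_{n+1})-q(t_n)}{\tau}=-\frac{1}{T}q(t_{n+1})+\exp\Big(\frac{t_{n+1}}{T}\Big)\big(\u(t_{n+1})\cdot\nabla\u(t_{n+1}),\u(t_{n+1})\big)+R_q^{n+1},
\end{align*}
with $R_q^{n+1}=\frac{q(t_{n+1})-q(t_n)}{\tau}-q_t(t_{n+1})$. Exactly as in \eqref{tumor-45}, the assumption $q_{tt}\in L^2(0,T)$ gives $|R_q^{n+1}|\le C\tau^{1/2}$.

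Subtracting this from \eqref{tumor-algorithm9} and using that the exact trilinear term vanishes, the error equation is
\begin{align*}
\frac{e_q^{n+1}-e_q^n}{\tau}+\frac{1}{T}e_q^{n+1}=\exp\Big(\frac{t_{n+1}}{T}\Big)\big(\u^n\cdot\nabla\u^n,\tilde{\u}^{n+1}\big)-R_q^{n+1}.
\end{align*}
We then split the right-hand side by writing $\tilde{\u}^{n+1}=\tilde e_\u^{n+1}+\u(t_{n+1})$:
\begin{align*}
\big(\u^n\cdot\nabla\u^n,\tilde{\u}^{n+1}\big)=\big(\u^n\cdot\nabla\u^n,\tilde e_\u^{n+1}\big)+\big(\u^n\cdot\nabla\u^n,\u(t_{n+1})\big).
\end{align*}
Multiplying the error equation by $e_q^{n+1}$ and using $2a(a-b)=a^2-b^2+(a-b)^2$ produces the left-hand side of \eqref{tumor-q-results}. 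It also produces the first term on the right, $\exp(t_{n+1}/T)\,e_q^{n+1}(\u^n\cdot\nabla\u^n,\tilde e_\u^{n+1})$, which is left untouched. It cancels exactly against the matching term with opposite sign in \eqref{tumor-u-results} once the two lemmas are summed.

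The main work is the remaining consistency term $(\u^n\cdot\nabla\u^n,\u(t_{n+1}))$. Since $\nabla\cdot\u(t_n)=0$ and $\u(t_n)|_{\partial\Omega}=0$, we have $(\u(t_n)\cdot\nabla\u(t_n),\u(t_n))=0$, so we may subtract this zero and expand with $\u^n=e_\u^n+\u(t_n)$. After that, every piece contains either $e_\u^n$ or $\u(t_{n+1})-\u(t_n)$. The piece with two error factors is $(e_\u^n\cdot\nabla e_\u^n,\u(t_{n+1}))$. Using skew-symmetry (with care, since $\nabla\cdot\u^n=0$ only up to the projection step), we move the gradient onto $\u(t_{n+1})\in L^\infty(0,T;H^2)$ and bound the piece by $C\|e_\u^n\|_{L^2}\|e_\u^n\|_{L^4}$, that is, by $\|e_\u^n\|_{L^2}$ times $\|e_\u^n\|_{L^2}^{1/2}\|\nabla e_\u^n\|_{L^2}^{1/2}$. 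The factor $\|\nabla e_\u^n\|_{L^2}$ is bounded by the induction hypothesis \eqref{tumor-eta-h2-induction} together with the regularity of $\u$. The pieces linear in $e_\u^n$ are bounded by $C\|e_\u^n\|_{L^2}$ using $\|\u(t_n)\|_{H^2}\le C$ and integration by parts. The time-difference piece is bounded by $C\tau\|\u_t\|_{L^2(t_n,t_{n+1};H^1)}$-type quantities, using $\|\u^n\|_{H^1}\le C$ from \eqref{tumor-regularity-L2} and \eqref{tumor-eta-h2-induction}. Multiplying by $e_q^{n+1}$ and applying Young's inequality then gives
\begin{align*}
\tfrac{1}{2T}|e_q^{n+1}|^2+C\|e_\u^n\|_{L^2}^2+C\tau^2 .
\end{align*}
The first of these is absorbed into the dissipative term $\frac1T|e_q^{n+1}|^2$ on the left; to match the stated form exactly, the constant in front of $\frac1T$ is adjusted, or the $|e_q^{n+1}|^2$ contribution is kept and absorbed later via Gronwall. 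Finally, the residual satisfies $|e_q^{n+1}R_q^{n+1}|\le \frac{1}{4T}|e_q^{n+1}|^2+C|R_q^{n+1}|^2$.

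Collecting these bounds yields \eqref{tumor-q-results}. The main obstacle is the quadratic error term $(e_\u^n\cdot\nabla e_\u^n,\u(t_{n+1}))$: it can be closed only because the $H^1$ bound on $\u^n$ is supplied by the induction hypothesis, since the energy estimate of the previous theorem gives only $L^2$ control.
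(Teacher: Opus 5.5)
Your argument is correct and is the Li--Shen technique that the paper invokes without detail (its only justification is ``by the same technique in \cite{shenjie2022}''): test the $q$-error equation with $e_q^{n+1}$, keep $\exp(t_{n+1}/T)\,e_q^{n+1}(\u^n\cdot\nabla\u^n,\tilde e_\u^{n+1})$ to cancel against \eqref{tumor-u-results}, and bound the remaining consistency term using the induction bound on $\|\nabla\u^n\|_{L^2}$ and the regularity of $\u$. As you noted, Young's inequality uses up part of $\frac1T|e_q^{n+1}|^2$, so the argument gives a smaller coefficient such as $\frac{1}{4T}$ rather than the literal $\frac1T$, which does not affect the Gronwall step; also, $\u^n$ is exactly divergence-free with $\u^n\cdot\n=0$ after the projection step (only $\tilde\u^n$ is not), so your skew-symmetry step needs no extra care and could even be skipped.
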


\subsection{The proof of Theorem \ref{tumor-main-theorem}}
\begin{proof}
Since
\begin{align}
    \tau \sum^{N-1}_{n=1} \big(\|R^{n+1}_\eta\|_{L^2}^2+|R^{n+1}_r|^2+\|R^{n+1}_c\|_{L^2}^2+\|R^{n+1}_\u\|_{L^2}^2+|R^{n+1}_q|^2\big)\leq C\tau^2.
\end{align}

Applying the technique used in \eqref{tumor-50} to the term $\big(P(\eta^n) e^{n+1}_{\mu \eta}, e^{n+1}_{\mu c}\big)$ in \eqref{tumor-eta-results}, we can combine the outcome with the primary estimates \eqref{tumor-eta-results}, \eqref{tumor-r-results}, \eqref{tumor-c-results}, \eqref{tumor-u-results}, and \eqref{tumor-q-results} to obtain
\begin{align}
		&\frac{\lambda + \epsilon^2}{2\tau} \left( \|e_{\eta}^{n+1}\|_{L^2}^2 - \|e_{\eta}^n\|_{L^2}^2 + \|e_{\eta}^{n+1} - e_{\eta}^n\|_{L^2}^2 \right) \\
		&+ \frac{\epsilon^2}{2\tau} \left( \|\nabla e_{\eta}^{n+1}\|_{L^2}^2 - \|\nabla e_{\eta}^n\|_{L^2}^2 + \|\nabla(e_{\eta}^{n+1} - e_{\eta}^n)\|_{L^2}^2 \right) \notag\\
		&+\frac{1}{\tau}(|e^{n+1}_r|^2-|e^{n}_r|^2+|e^{n+1}_r-e^{n}_r|^2)\notag\\
		&+\left( \frac{1 + \delta^{-1}}{2\tau} \right) \left( \| e_c^{n+1} \|_{L^2}^2 - \| e_c^n \|_{L^2}^2 + \| e_c^{n+1} - e_c^n \|_{L^2}^2 \right)\notag\\
		&+\frac{1}{2\tau}(\| e^{n+1}_\u\|^2_{L^2}-\|e^n_\u\|^2_{L^2} + \| \tilde{e}^{n+1}_\u-e^n_\u\|^2_{L^2})\notag\\
			& +\frac{1}{2\tau} \left(|e_{q}^{n+1}|^{2} - |e_{q}^{n}|^{2} + |e_{q}^{n+1}-e_{q}^{n}|^{2}\right)\notag\\
			& +\frac{\tau}{2} (\|\nabla e_{p}^{n+1}\|^{2} - \|\nabla e_{p}^{n}\|^{2}) +\|\sqrt{P(\eta^n)}e^{n+1}_{\mu \eta} \|^2_{L^2}+ \|\sqrt{P(\eta^n)}e^{n+1}_{\mu c} \|^2_{L^2}\notag\\
			&+ \delta^{-1} \| \nabla e_c^{n+1} \|_{L^2}^2 + \| e_{\mu c}^{n+1} \|_{L^2}^2-\frac{1}{2} \| e_{\mu c}^{n} \|_{L^2}^2+  \| \nabla e_{\mu c}^{n+1} \|_{L^2}^2-\frac{1}{2}\| \nabla e_{\mu c}^{n} \|_{L^2}^2 \notag\\
			&+ \|e_{\mu \eta}^{n+1}\|_{L^2}^2 -\frac{1}{2} \|e_{\mu \eta}^{n}\|_{L^2}^2+ \|\nabla e_{\mu \eta}^{n+1}\|_{L^2}^2-\frac{1}{2}\|\nabla e_{\mu \eta}^{n}\|_{L^2}^2+ \frac{1}{T} |e_{q}^{n+1}|^{2}+\| \nabla \tilde{e}^{n+1}_\u\|^2_{L^2}\notag\\
			\leq&C (|e_r^{n+1}|^2 +\|e_c^{n+1}\|_{L^2}^2+ \epsilon^2 \|\nabla e_{\eta}^{n+1}\|_{L^2}^2+ \|e_\u^n\|_{L^2}^2 \notag\\
			&+ \|\nabla e_{\eta}^n\|_{L^2}^2 +\|e_{\eta}^n\|_{L^2}^2)+C\left\|\nabla e_{\mathbf{u}}^{n}\right\|_{L^{2}}^{2}\left\|e_{\mathbf{u}}^{n}\right\|_{L^{2}}^{2}+C\tau^2(\|\nabla e^{n+1}_p\|^2_{L^2}+\|\nabla e_{p}^{n}\|^2_{L^2})\notag\\&+C\big(\|R^{n+1}_\eta\|_{L^2}^2+|R^{n+1}_r|^2+\|R^{n+1}_c\|_{L^2}^2+\|R^{n+1}_\u\|_{L^2}^2+|R^{n+1}_q|^2\big)\notag\\
			&+ \epsilon_{\mu \eta} \|\nabla e_{u\eta}^{n+1}\|_{L^2}^2 + \epsilon_{\mu c} \|\nabla e_{uc}^{n+1}\|_{L^2}^2+\epsilon_\u\left\|\nabla \tilde{e}_{\mathbf{u}}^{n+1}\right\|_{L^{2}}^{2}+\epsilon_{c} \|\nabla e_c^{n+1}\|_{L^2}^2 +C \tau^2.\notag
\end{align}

Multiplying it by $\tau$, summing up from $n=0$ to $N-1$ and  by using Gronwall  inequalities in Lemma 5.1 of \cite{heywood1990}, for sufficiently small $\epsilon_{\mu \eta}, \epsilon_{\mu c}, \epsilon_{\u}, \epsilon_c$, we arrive at (\ref{tumor-main-results}). The proof of Theorem  \ref{tumor-main-theorem} is done.
\end{proof}

Under this induction hypothesis \eqref{tumor-eta-h2-induction}, Theorem  \ref{tumor-main-theorem} imply the error estimate
up to the time level ${n+1}$. We then use this error estimate to prove
$$
\|\nabla\mathbf u^{n+1}\|_{L^2}
+\|\Delta\eta^{n+1}\|_{L^2}
+\|\Delta c^{n+1}\|_{L^2}
\leq C.
$$
This closes the induction.
To complete the mathematical induction for $\| \nabla \u^{n+1}\|_{L^2}$, from (\ref{tumor-main-results}), we can derive 
$
	\tau  \sum_{n=0}^{N-1} \|\nabla \tilde{e}_{\mathbf{u}}^{n+1}\|^2 \leq C\tau^2,
$
which implies that
\begin{align}\label{tumor-u-h1}
	\|\nabla \mathbf{u}^{n+1}\|_{L^2} \leq C\|\nabla \tilde{\mathbf{u}}^{n+1}\|_{L^2} \leq C \left( \tau^{1/2} + \|\nabla \mathbf{u}(t_{n+1})\|_{L^2} \right)\leq C.
\end{align}
the above inequality holds thanks to the fact that \cite{temam2024,shenjie2021},
\begin{align}
\|\mathbf{u}^{n+1}\|_{\mathbf{H}^1(\Omega)} = \|P_H \tilde{\mathbf{u}}^{n+1}\|_{\mathbf{H}^1(\Omega)} \leq C(\Omega) \|\tilde{\mathbf{u}}^{n+1}\|_{\mathbf{H}^1(\Omega)}.
\end{align}

The corresponding estimates for  $\|\Delta \eta^{n+1}\|_{L^2}$ and $\|\Delta c^{n+1}\|_{L^2}$ will be established by the following theorem.
 \begin{theorem}\label{tumor-eta-h2-theorem}
	Under the assumption (\ref{tumor-regularity}), there exists a positive $C_2$ independent of $\tau$ such that
	\begin{align}
			& \left\| \Delta \eta^{N} \right\|_{L^2}^2 +\lambda \tau \sum_{n=0}^{N-1} \left\| \nabla \Delta \eta^{n+1} \right\|_{L^2}^2 + \epsilon^2 \tau \sum_{n=0}^{N-1} \left\| \Delta^2 \eta^{n+1} \right\|_{L^2}^2  \label{tumor-eta-h2}\\
			& +\left\| \nabla c^{N} \right\|_{L^2}^2 +\delta^{-1} \tau\sum_{n=0}^{N-1}  \left\| \Delta c^{n+1} \right\|_{L^2}^2  
			\leq C_2.\notag
	\end{align}
which implis $\|\Delta \eta^{n+1}\|^2_{L^2}\leq C.$
\end{theorem}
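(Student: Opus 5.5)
We would prove \eqref{tumor-eta-h2} by a higher-order energy estimate on the scheme itself rather than on the error equations, using the error bound of Theorem~\ref{tumor-main-theorem} (valid up to level $n+1$ under the induction hypothesis \eqref{tumor-eta-h2-induction}) only to control the numerical data. The test functions are chosen so that the leading terms telescope. Test \eqref{tumor-algorithm1} with $\Delta^2\eta^{n+1}$, after substituting \eqref{tumor-algorithm2} for $\mu_\eta^{n+1}$. Test \eqref{tumor-algorithm4} with $-\Delta c^{n+1}$, after substituting \eqref{tumor-algorithm5}. The Neumann conditions on $\eta^{n+1}$, $\Delta\eta^{n+1}$ (coming from $\partial_{\mathbf n}\mu_\eta^{n+1}=0$) and on $c^{n+1}$ justify the integrations by parts.

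For the phase field, $(D_\tau\eta^{n+1},\Delta^2\eta^{n+1})$ gives $\frac{1}{2\tau}(\|\Delta\eta^{n+1}\|^2-\|\Delta\eta^n\|^2+\|\Delta(\eta^{n+1}-\eta^n)\|^2)$. The term $-(\Delta\mu_\eta^{n+1},\Delta^2\eta^{n+1})$ produces the good terms $\lambda\|\nabla\Delta\eta^{n+1}\|^2+\epsilon^2\|\Delta^2\eta^{n+1}\|^2$ plus cross terms:
\begin{itemize}
\item $\chi_0(\Delta c^{n+1},\Delta^2\eta^{n+1})$, bounded by $\frac{\epsilon^2}{8}\|\Delta^2\eta^{n+1}\|^2+C\|\Delta c^{n+1}\|^2$;
\item $\frac{r^{n+1}}{\sqrt{E_1(\eta^n)+C_1}}(\Delta f'(\eta^n),\Delta^2\eta^{n+1})$.
\end{itemize}
For the second cross term we use $|r^{n+1}|\le C$ from \eqref{tumor-regularity-L2} and expand $\Delta f'(\eta^n)=f''(\eta^n)\Delta\eta^n+f'''(\eta^n)|\nabla\eta^n|^2$. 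Since $f$ is a quartic, in 2D the induction hypothesis gives $\|\eta^n\|_{L^\infty}+\|\nabla\eta^n\|_{L^4}\le C$, hence $\|\Delta f'(\eta^n)\|\le C$. The convection term $(\mathbf u^n\cdot\nabla\eta^n,\Delta^2\eta^{n+1})$ is bounded by $C\|\mathbf u^n\|_{L^4}\|\nabla\eta^n\|_{L^4}\|\Delta^2\eta^{n+1}\|$, which \eqref{tumor-eta-h2-induction} controls. The reaction term is bounded by $\|P(\eta^n)\|_{L^\infty}\|\mu_\eta^{n+1}-\mu_c^{n+1}\|\,\|\Delta^2\eta^{n+1}\|$ via \eqref{tumor-p-eta-infty}. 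Here $\|\mu_\eta^{n+1}\|+\|\mu_c^{n+1}\|$ is bounded by writing $\mu^{n+1}=\mu(t_{n+1})+e_\mu^{n+1}$. We do not claim a pointwise bound on $\|e_\mu^{n+1}\|$. Instead we keep the contribution as $C\|e_{\mu\eta}^{n+1}\|^2+C\|e_{\mu c}^{n+1}\|^2$ on the right-hand side, whose $\tau$-sum is $O(\tau^2)$ by Theorem~\ref{tumor-main-theorem}.

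For the nutrient, $(D_\tau c^{n+1},-\Delta c^{n+1})$ gives $\frac{1}{2\tau}(\|\nabla c^{n+1}\|^2-\|\nabla c^n\|^2+\dots)$. The diffusion term $(\Delta\mu_c^{n+1},\Delta c^{n+1})$ gives $\delta^{-1}\|\Delta c^{n+1}\|^2-\chi_0(\Delta\eta^n,\Delta c^{n+1})$. The convection and reaction terms are handled exactly as for $\eta$.

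After absorbing with Young's inequality, the leftover $C\|\Delta c^{n+1}\|^2$ from the $\eta$-equation must be dominated by the good term $\delta^{-1}\|\Delta c^{n+1}\|^2$. We expect this weighting to be \textbf{the main obstacle}. We would first try the following fix: multiply the $c$-estimate by a large constant $K$, and put only $\frac{\epsilon^2}{8}\|\Delta^2\eta^{n+1}\|^2$ on the $\eta$-side of the cross term, so that the $c$-side coefficient is $2\chi_0^2/\epsilon^2$, a fixed number. Choosing $K\delta^{-1}>4\chi_0^2/\epsilon^2$ then suffices. Multiplying by $K$ enlarges the bad term $K\chi_0(\Delta\eta^n,\Delta c^{n+1})$, but it is still absorbable: bound it by $\frac{K\delta^{-1}}{4}\|\Delta c^{n+1}\|^2+CK\|\Delta\eta^n\|^2$, and the last term is a Gronwall term. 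If the constants still clash, the fallback is to replace the test function $\Delta^2\eta^{n+1}$ by $\Delta^2\eta^{n+1}-\lambda\Delta\eta^{n+1}$, i.e. to use the full operator $\lambda-\epsilon^2\Delta$.

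Finally, multiply by $\tau$ and sum over $n$. All remaining right-hand terms are $C\tau\sum(\|\Delta\eta^{n}\|^2+\|\nabla c^n\|^2)$, plus $C\tau\sum(1+\|e_{\mu\eta}^{n+1}\|^2+\|e_{\mu c}^{n+1}\|^2)\le C$. The discrete Gronwall lemma of \cite{heywood1990} then yields \eqref{tumor-eta-h2} with $C_2$ independent of $\tau$ and of the induction level, which closes \eqref{tumor-eta-h2-induction}. The bound on $\|\Delta c^{n+1}\|$ needed in the hypothesis is obtained in the same way: test the $c$-equation with $\Delta^2c^{n+1}$, or read it off from $\mu_c$ and the elliptic equation $\Delta\mu_c^{n+1}=D_\tau c^{n+1}+\dots$.
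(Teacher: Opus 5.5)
Your proof has the same skeleton as the paper's. You use the same test functions ($\Delta^2\eta^{n+1}$ for the phase equation, $-\Delta c^{n+1}$ for the nutrient equation), the same reliance on \eqref{tumor-eta-h2-induction} and Theorem~\ref{tumor-main-theorem} to control the numerical data, and the Gronwall lemma of \cite{heywood1990}. It differs in three places.

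\emph{Coupling term.} For $\chi_0(\Delta c^{n+1},\Delta^2\eta^{n+1})$ the paper does not weight the two estimates. It substitutes $\Delta c^{n+1}$ from the discrete nutrient equation \eqref{tumor-42}. It then bounds $\chi_0\delta(D_\tau c^{n+1},\Delta^2\eta^{n+1})$ by writing $D_\tau c^{n+1}$ through $e_c^{n+1}$, $e_c^n$ and using $\|e_c^{k}\|_{L^2}\le C\tau$, see \eqref{tumor-577}. Your multiplier $K$ is purely structural and works for any $\chi_0,\delta,\epsilon$. The paper's device needs the optimal pointwise rate, since anything weaker would not survive the division by $\tau$.

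\emph{Reaction terms.} You keep $\mu_\eta^{n+1}-\mu_c^{n+1}$ intact and split off $e_{\mu\eta}^{n+1}$, $e_{\mu c}^{n+1}$, whose $\tau$-sums are $O(\tau^2)$. The paper expands them via \eqref{tumor-algorithm2} and \eqref{tumor-algorithm5}. That creates implicit terms $C\|\Delta\eta^{n+1}\|_{L^2}^2$, so it needs the implicit Gronwall lemma with $\tau$ small. You are left with explicit level-$n$ quantities only.

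\emph{The term $\Delta f'(\eta^n)$.} You use the hypothesis $\|\Delta\eta^n\|_{L^2}\le C$ to get $L^\infty$ and $W^{1,4}$ bounds. The paper instead uses Lemma~2.4 of \cite{shenjie2018}, giving $\epsilon_2\|\Delta^2\eta^{n+1}\|_{L^2}^2+\frac{\epsilon^2}{2}\|\Delta^2\eta^n\|_{L^2}^2+C(M)$. Here $M$ is only the unconditional $H^1$ bound from \eqref{tumor-regularity-L2}, and the $\|\Delta^2\eta^n\|$ piece telescopes against $\epsilon^2\|\Delta^2\eta^{n+1}\|_{L^2}^2$.

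That last choice undermines your final claim that the induction closes. Suppose the hypothesis is $\|\Delta\eta^i\|_{L^2}\le M_0$ for $i\le n$. Then your bound on $\|\Delta f'(\eta^n)\|_{L^2}$ is polynomial in $M_0$, so $C_2=C_2(M_0)$. Independence of $\tau$ and $n$ then does not close \eqref{tumor-eta-h2-induction} unless $C_2(M_0)\le M_0^2$, which nothing guarantees. The paper's treatment of $\Xi_3$ removes this dependence for the nonlinear term. However, the paper still uses the hypothesis through $\|P(\eta^n)\|_{L^\infty}$, the convection terms and Theorem~\ref{tumor-main-theorem}, so its induction is not airtight either.

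A clean closure fixes $M_0$ from the exact solution and uses $\|\Delta e_\eta^{n+1}\|_{L^2}\le C(M_0)\tau^{1/2}$ with $\tau$ small. This bound follows from \eqref{tumor-16} together with the bound on $\tau\sum\|e_{\mu\eta}^{n+1}\|_{L^2}^2$ in Theorem~\ref{tumor-main-theorem}. It is the same way the paper handles $\nabla\mathbf{u}^{n+1}$ in \eqref{tumor-u-h1}.

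Finally, drop the option of testing the nutrient equation with $\Delta^2 c^{n+1}$. The scheme only gives $\partial c^{n+1}/\partial\mathbf{n}=0$, not $\partial\Delta c^{n+1}/\partial\mathbf{n}=0$, so boundary terms survive. The elliptic route through \eqref{tumor-42}, with $\|D_\tau c^{n+1}\|_{L^2}\le C$ from the error estimate, is the one the paper uses after the theorem.
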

\begin{proof}
	We will give the proof by mathematical induction, assuming that 
	$
\| \Delta c^i \|_{L^2}+\| \Delta \eta^i \|_{L^2} \leq C, \, 0 \leq i \leq n.
	$
	According to the algorithm (\ref{tumor-algorithm1}), (\ref{tumor-algorithm2}), (\ref{tumor-algorithm4}), (\ref{tumor-algorithm5}), we have 
	\begin{align}
		& \frac{\eta^{n+1}-\eta^n}{\tau} + \frac{r^{n+1}}{\sqrt{E_1(\eta^n)+C_1}} \u^n \cdot \nabla \eta^n -  \lambda\Delta \eta^{n+1} + \epsilon^2 \Delta^2 \eta^{n+1} \label{tumor-22}\\
		&\quad+ \chi_0 \Delta c^{n+1} - \frac{r^{n+1}}{\sqrt{E_1(\eta^n)+C_1}} \Delta f'(\eta^n) + P(\eta^n) \big( \lambda \eta^{n+1} - \epsilon^2 \Delta \eta^{n+1}  \notag\\
		&\quad- \chi_0 c^{n+1}+ \frac{r^{n+1}}{\sqrt{E_1(\eta^n)+C_1}} f'(\eta^n) - \delta^{-1} c^{n+1} + \chi_0 \eta^n \big) = 0,\notag\\
		& \frac{c^{n+1}-c^n}{\tau} + \frac{r^{n+1}}{\sqrt{E_1(\eta^n)+C_1}} \u^n \cdot \nabla c^n- \delta^{-1} \Delta c^{n+1} + \chi_0 \Delta \eta^n - P(\eta^n) \big( \lambda \eta^{n+1}  \label{tumor-23}\\
		&\quad- \epsilon^2 \Delta \eta^{n+1}- \chi_0 c^{n+1} + \frac{r^{n+1}}{\sqrt{E_1(\eta^n)+C_1}} f'(\eta^n) - \delta^{-1} c^{n+1} + \chi_0 \eta^n \big) = 0.\notag
	\end{align}
	
	Test (\ref{tumor-22}) by $\Delta^2\eta^{n+1}$, we have
	\begin{align}
		& \frac{1}{2\tau}\big(\left\| \Delta \eta^{n+1} \right\|_{L^2}^2 - \left\| \Delta \eta^n \right\|_{L^2}^2 + \left\| \Delta \eta^{n+1} - \Delta \eta^n \right\|_{L^2}^2\big)\label{tumor-26}\\
		&
		\quad+ \lambda \left\| \nabla \Delta \eta^{n+1} \right\|_{L^2}^2 + \epsilon^2 \left\| \Delta^2 \eta^{n+1} \right\|_{L^2}^2 \notag \\
		= & - \frac{r^{n+1}}{\sqrt{E_1(\eta^n)+C_1}} (\u^n \cdot \nabla \eta^n, \Delta^2 \eta^{n+1}) - \chi_0 (\Delta c^{n+1}, \Delta^2 \eta^{n+1}) \notag\\
		&+ \frac{r^{n+1}}{\sqrt{E_1(\eta^n)+C_1}} (\Delta f'(\eta^n), \Delta^2 \eta^{n+1})-(\lambda P(\eta^n) \eta^{n+1},\Delta^2 \eta^{n+1} )\notag\\
		& +(\epsilon^2P(\eta^n) \Delta \eta^{n+1},\Delta^2 \eta^{n+1}) + (\chi_0P(\eta^n) c^{n+1}, \Delta^2 \eta^{n+1})  \notag\\
		&-  \frac{r^{n+1}}{\sqrt{E_1(\eta^n)+C_1}} (P(\eta^n)f'(\eta^n), \Delta^2 \eta^{n+1})+ (\delta^{-1} P(\eta^n) c^{n+1}, \Delta^2 \eta^{n+1}) \notag\\
		& - (\chi_0 P(\eta^n)  \eta^n, \Delta^2 \eta^{n+1})\notag\\
		:=& \sum_{i=1}^{9}\Xi_i. \notag
	\end{align}
	
	By noticing that $|r^{n+1}| \leq C$ and $E_1+C_1 >C_0$, taking use of $\| a\|_{L^4}\leq C \| a \|^{\frac{1}{2}}_{L^2}\|\nabla a \|^{\frac{1}{2}}_{L^2}$, we can deduce
	\begin{align}
	\Xi_1
		&\leq C \| \u^n\|^{\frac{1}{2}}_{L^2}\|\u^n\|_{H^1}^{\frac{1}{2}} \| \nabla \eta^n\|_{L^2}^{\frac{1}{2}} \| \nabla \eta^n\|_{H^1}^{\frac{1}{2}}\| \Delta^2 \eta^{n+1}\|_{L^2}\label{tumor-38}\\
		&\leq C\| \Delta \eta^n\|_{L^2}^2+ \epsilon_1 \| \Delta^2 \eta^{n+1}\|^2_{L^2}.\notag
	\end{align}
	
Adapt the same technique in Lemma 2.4 of \cite{shenjie2018} and using (\ref{tumor-p-assumption}), one has
	\begin{align}
	\Xi_3\leq& C\|\Delta f'(\eta^n)\|_{L^2} \| \Delta ^2 \eta^{n+1}\|_{L^2}\label{tumor-25}\\
		\leq &\epsilon_2 \| \Delta^2 \eta^{n+1}\|_{L^2}^2+\frac{\epsilon^2}{2}\| \Delta^2 \eta^n\|^2_{L^2}+C(M),\notag\\
		\Xi_7 \leq & C\| P(\eta^n)\|_{L^{\infty}} \|f'(\eta^n)\|_{L^2} \| \Delta^2 \eta^{n+1}\|_{L^2}\label{tumor-44}\\
		\leq& \epsilon_1\| \Delta^2 \eta^{n+1}\|_{L^2}^2+C.\notag
	\end{align}
	
	By using (\ref{tumor-regularity-L2}), (\ref{tumor-p-eta-infty}) H\"older inequality and Young inequality, we have
	\begin{align}
		&\Xi_4+\Xi_5+\Xi_6+\Xi_8+\Xi_9\label{tumor-41}\\
		\leq &C\| \Delta \eta^{n}\|_{L^2}^2+\epsilon_1 \| \Delta^2 \eta^{n+1}\|^2_{L^2}+C \| \Delta \eta^{n+1}\|_{L^2} ^2+C.\notag
	\end{align}


	As for $\Xi_2$, from (\ref{tumor-23}),  (\ref{tumor-main-results}), (\ref{tumor-regularity}), we have
	\begin{align}
		\Delta c^{n+1} &= \delta \frac{c^{n+1}-c^n}{\tau} + \delta \frac{r^{n+1}}{\sqrt{E_1(\eta^n)+C_1}} \u^n \cdot \nabla c^n+ \delta \chi_0 \Delta \eta^{n} \label{tumor-42}\\
		&\quad - \delta P(\eta^n) \big( \lambda \eta^{n+1} - \epsilon^2 \Delta \eta^{n+1} - \chi_0 c^{n+1} + \frac{r^{n+1}}{\sqrt{E_1(\eta^n) + C_1}} f'(\eta^n) - \delta^{-1} c^{n+1} + \chi_0 \eta^n \big).\notag
	\end{align}
	
	Furthermore, from (\ref{tumor-main-results}), we can deduce that
	\begin{align}\label{tumor-577}
		&- \chi_0 \delta \left(  \frac{c^{n+1}-c^n}{\tau}, \Delta^2 \eta^{n+1} \right) \\
		\leq &\epsilon_1 \|\Delta^2 \eta^{n+1}\|_{L^2}^2 + \frac{C}{\tau^2} \left( \|{e_c^{n+1}}\|_{L^2}^2 + \|{e_c^n}\|_{L^2}^2 \right) + C \|{c_t}\|_{L^2}^2 \notag\\
		\leq &\epsilon_1 \|\Delta^2 \eta^{n+1}\|_{L^2}^2 + C.\notag
	\end{align}

Similar to (\ref{tumor-38}), we get
	\begin{align}\label{tumor-43}
\delta \frac{r^{n+1}}{\sqrt{E_1(\eta^n)+C_1}} (\u^n \cdot \nabla c^n,\Delta^2 \eta^{n+1})
		\leq  \epsilon_1 \|{\Delta^2 \eta^{n+1}}\|_{L^2}^2 + C \|{\Delta c^{n}}\|_{L^2}^2.\notag
	\end{align}
	
	Combining  (\ref{tumor-44}) -- (\ref{tumor-577}), we have
	\begin{align}
	\Xi_2\leq C+\epsilon_1 \|{\Delta^2 \eta^{n+1}}\|_{L^2}^2 + \epsilon_2 \|{\Delta c^{n+1}}\|_{L^2}^2+C\| \Delta \eta^{n+1}\|_{L^2}^2+C\|\Delta \eta^n\|^2_{L^2}.
	\end{align}
	
	Test (\ref{tumor-23}) by $-\Delta c^{n+1}$, we get
	\begin{align}\label{tumor-24}
			&\frac{1}{2\tau} \big(\left\| \nabla c^{n+1} \right\|_{L^2}^2 - \left\| \nabla c^n \right\|_{L^2}^2 + \left\| \nabla (c^{n+1} - c^n) \right\|_{L^2}^2\big) + \delta^{-1} \left\| \Delta c^{n+1} \right\|_{L^2}^2 \\
			= &  \frac{r^{n+1}}{\sqrt{E_1(\eta^n)+C_1}} (\u^n \cdot \nabla c^n, \Delta c^{n+1}) + (\chi_0 \Delta \eta^{n}, \Delta c^{n+1}) -  (\lambda P(\eta^n) \eta^{n+1}, \Delta c^{n+1}) \notag\\
			& +  (\epsilon^2 P(\eta^n) \Delta \eta^{n+1}, \Delta c^{n+1}) +(\chi_0 P(\eta^n) c^{n+1},\Delta c^{n+1} )\notag\\
			&-   \frac{r^{n+1}}{\sqrt{E_1(\eta^n)+C_1}}\left( P(\eta^n) f'(\eta^n), \Delta c^{n+1} \right) +(\delta^{-1}P(\eta^n) c^{n+1},\Delta c^{n+1})\notag\\
			& -  (\chi_0 P(\eta^n) \eta^n, \Delta c^{n+1})\notag\\
			:=&\sum_{i=10}^{17}\Xi_i.\notag
	\end{align}
	
	Next we estimate the right hand of (\ref{tumor-24}). Similar to (\ref{tumor-38}), we get
	\begin{align}
		\Xi_{10}
			\leq \epsilon_2 \| \Delta c^{n+1}\|_{L^2}^2+C.\notag
	\end{align}
	
	By (\ref{tumor-regularity-L2}), (\ref{tumor-p-eta-infty}), H\"older inequality and Young inequality, we have
	\begin{align}
		&\Xi_{11}+	\Xi_{12}+	\Xi_{13}+	\Xi_{14}+	\Xi_{16}+	\Xi_{17}\\
			\leq &C \| \Delta \eta^{n+1} \|_{L^2}^2 +C \| \Delta \eta^{n} \|_{L^2}^2 + \epsilon_2 \| \Delta c^{n+1}\|^2_{L^2}+C.\notag
	\end{align}
	
	By the same technique in (\ref{tumor-44}), we have
	\begin{align}
\Xi_{15} \leq & \epsilon_2\| \Delta c^{n+1}\|_{L^2}^2+C.
	\end{align}
	
	Substituting $\Xi_i$ and Summing up (\ref{tumor-26}) and (\ref{tumor-24}), multiplying it by $\tau$, for sufficiently small $\epsilon_1,\epsilon_2$, we have
	\begin{align}
		\begin{aligned}
			& \left\| \Delta \eta^{n+1} \right\|_{L^2}^2 - \left\| \Delta \eta^n \right\|_{L^2}^2 + \left\| \Delta \eta^{n+1} - \Delta \eta^n \right\|_{L^2}^2 \\
			& +\left\| \nabla c^{n+1} \right\|_{L^2}^2 - \left\| \nabla c^n \right\|_{L^2}^2 + \left\| \nabla (c^{n+1} - c^n) \right\|_{L^2}^2 \\
			&+ \lambda \tau \left\| \nabla \Delta \eta^{n+1} \right\|_{L^2}^2 + \epsilon^2 \tau \left\| \Delta^2 \eta^{n+1} \right\|_{L^2}^2 -\frac{\epsilon^2}{2} \tau \| \Delta^2 \eta^n\|^2_{L^2} + \delta^{-1} \tau \left\| \Delta c^{n+1} \right\|_{L^2}^2  \\
			\leq &C \tau \| \Delta \eta^{n+1}\|^2_{L^2}+C \tau \| \Delta \eta^{n}\|^2_{L^2}+C\tau.
		\end{aligned}
	\end{align}

	Summing the above inequality over $n=0,1,2,\ldots,N-1$ and by using Gronwall  inequalities in Lemma 5.1 of \cite{heywood1990},
	we obtain \eqref{tumor-eta-h2}.
	This completes the proof of Theorem~\ref{tumor-eta-h2-theorem}.
\end{proof}

 Finally, from \eqref{tumor-42}, Theorem \ref{tumor-main-theorem} and Theorem~\ref{tumor-eta-h2-theorem}, and regularity of elliptic problem, we can deduce
 \begin{align*}
 \|\Delta c^{n+1}\|_{L^2}
 \leq &C \left( \|\frac{e_c^{n+1} - e_c^n}{\tau}\|_{L^2} + \|\frac{c(t_{n+1}) - c(t_n)}{\tau}\|_{L^2} + \|u^n\|_{L^4} \|\nabla c^n\|_{L^4} + \|\Delta \eta^{n+1}\|_{L^2} \right) \\
 &\quad + C \|P(\eta^n)\|_{L^\infty} \big( \|\eta^{n+1}\|_{L^2} + \|\Delta \eta^{n+1}\|_{L^2} + \|c^{n+1}\|_{L^2} \notag\\
 & \quad+ \|\Delta \eta^n\|_{L^2} + \|c^{n+1}\|_{L^2} + \|\eta^n\|_{L^2} \big)\notag \\
 \leq & C.\notag
 \end{align*}

%
%

\section{Numerical experiments}

\subsection*{Example 1. Single Tumor Growth}
We set the final time $T=0.2$, $\epsilon=0.02$, $\chi_0=0.02$,
$\delta=0.4$, $P_0=20$, $\kappa=0.25$, $\rho=1$, and $\alpha_\u=1$ for
the simulation. The auxiliary parameters are chosen as
$\lambda=4\delta\chi_0^2$ and $C_1=1$. The initial condition for a circular tumor
with sufficient nutrients is considered on the unit square domain.
\begin{equation}
	\begin{aligned}
		\eta_0(x,y)
		&=\frac{1}{2}\left[
		\tanh\left(
		\frac{0.15-\sqrt{(x-0.5)^2+(y-0.5)^2}}{\sqrt{2}\epsilon}
		\right)+1\right],\\
		c_0(x,y)&=1-\eta_0(x,y),
	\end{aligned}
	\label{eq:single_tumor_initial_eta_c}
\end{equation}
with the initial velocity and pressure given by
\begin{equation}
	\begin{aligned}
		\mathbf{u}_0(x,y)
		&=\left(
		-\frac{1}{4}\sin^2(x)\sin(2y),
		\frac{1}{4}\sin(2x)\sin^2(y)
		\right),\\
		p_0(x,y)&=\cos\left(\frac{x}{2}\right)\cos\left(\frac{y}{2}\right),
	\end{aligned}
	\label{eq:single_tumor_initial_up}
\end{equation}
where the pressure is projected to have zero mean.

To verify the convergence behavior, we use the same single-tumor-growth
example for both the time and space convergence tests. The
time-convergence results are computed with the reference solution on a $256\times256$ mesh and $N=2048$ time
steps. The test pairs (mesh resolution, number of time steps) are
$(16,16)$, $(32,32)$, $(64,64)$, $(128,128)$, and $(256,256)$.
The space-convergence results are computed with the reference solution on a $128\times128$ mesh and $N=4096$ time
steps. The test uses mesh resolutions $M\in\{4,8,16,32,64\}$ with
$N=M^2$ time steps.

\begin{figure}[!htbp]
	\centering
	\begin{subfigure}{0.47\textwidth}
		\centering
		\includegraphics[width=\textwidth]{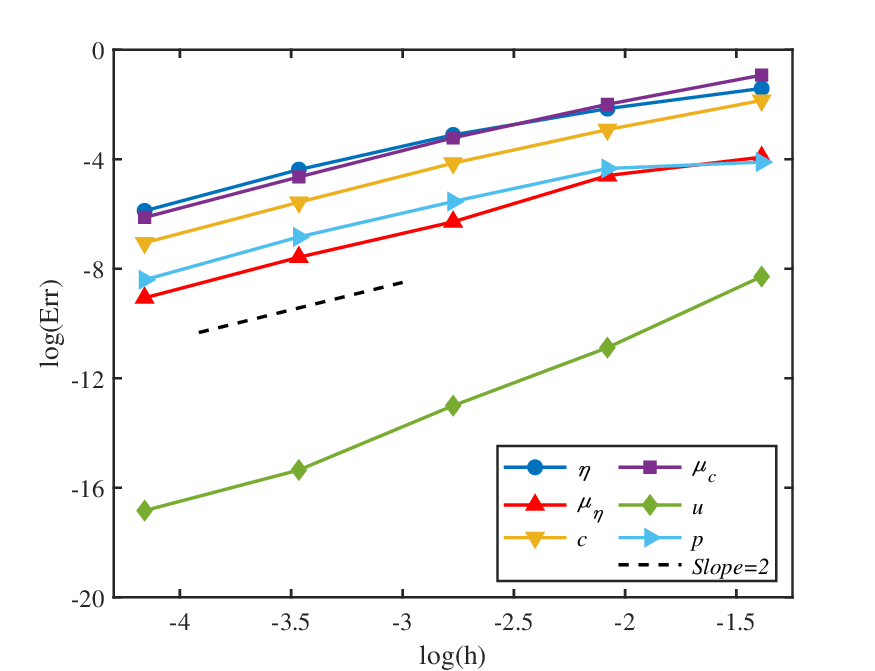}
		\caption{Spatial convergence results.}
		\label{fig:space_convergence}
	\end{subfigure}
	\hfill
	\begin{subfigure}{0.47\textwidth}
		\centering
		\includegraphics[width=\textwidth]{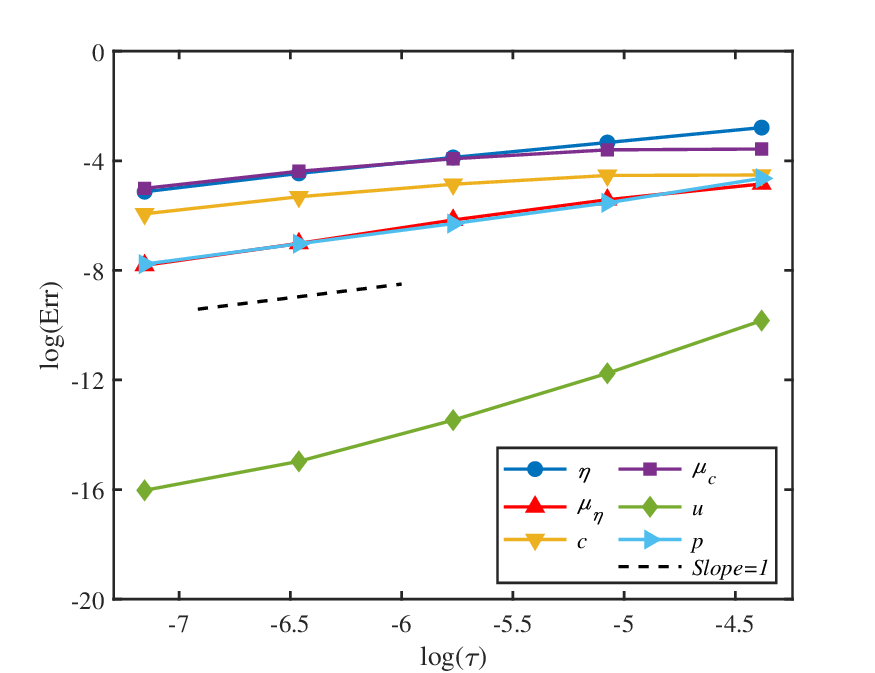}
		\caption{Temporal convergence results.}
		\label{fig:time_convergence}
	\end{subfigure}
	\caption{Convergence results.}
	\label{fig:convergence_results}
\end{figure}

Figure~\ref{fig:convergence_results} shows the spatial and temporal
convergence behavior of the scheme; the errors decrease as the mesh and
time step are refined.
The energy-monitoring test is performed for the same single-tumor growth example. A $96\times96$ mesh is employed with $N=20$ and $N=20{,}000$, corresponding to time step sizes $\Delta t=10^{-2}$ and $\Delta t=10^{-5}$, respectively. Figure~\ref{fig:mass_vs_time} presents the temporal evolution of the original energy together with the modified energy that includes the contribution of the auxiliary variable $q$. To further illustrate the consistency between the two energies, Figure~\ref{fig:energy_vs_time} displays the original energy and the modified energy after removing the $q$-dependent contribution. It can be observed that, as the time step decreases, the modified energy converges to the original energy. Therefore, the dissipation of the modified energy accurately reflects the dissipation of the original energy when a sufficiently small time step is adopted. 
Figure \ref{fig:mass_vs_time_single}
indicate that the total mass is conserved during the tumor growth, while the energy satisfies the expected dissipation behavior.

\begin{figure}[!htbp]
	\centering
	\begin{subfigure}{0.47\textwidth}
		\centering
		\includegraphics[width=\textwidth]{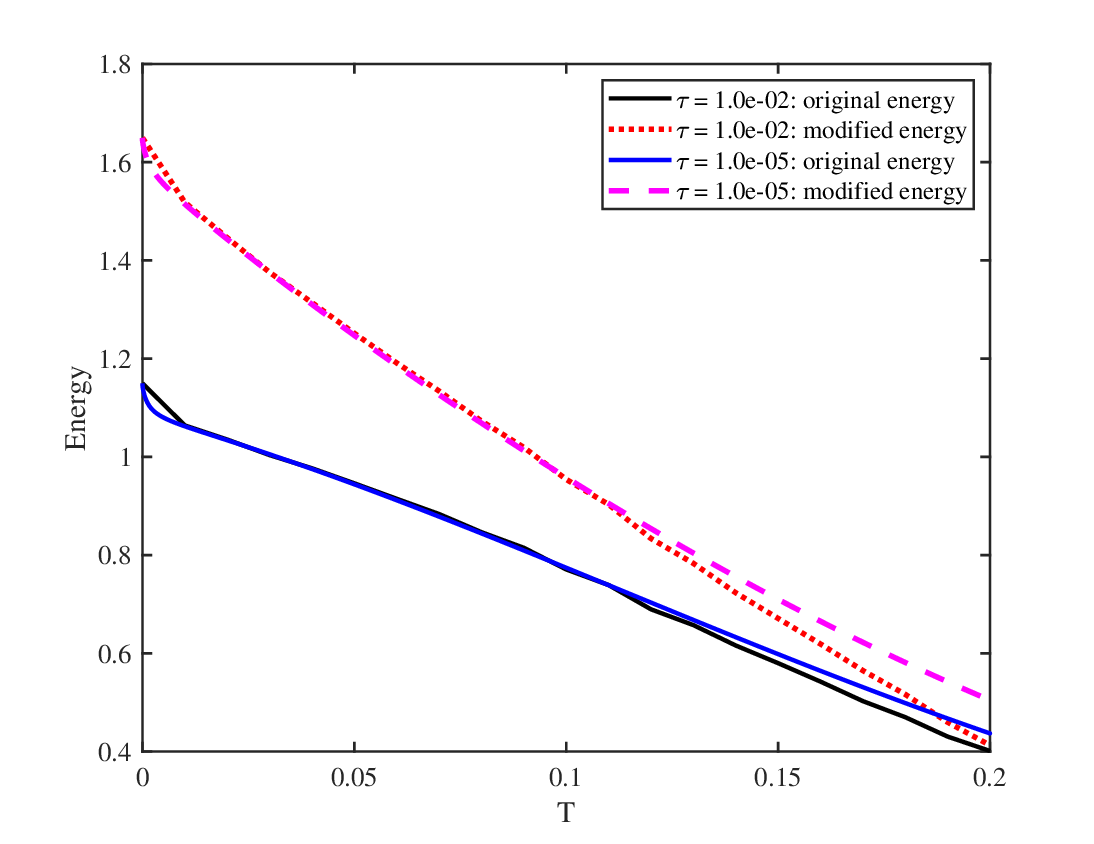}
		\caption{Total energy versus time with q.}
		\label{fig:mass_vs_time}
	\end{subfigure}
	\hfill
	\begin{subfigure}{0.47\textwidth}
		\centering
		\includegraphics[width=\textwidth]{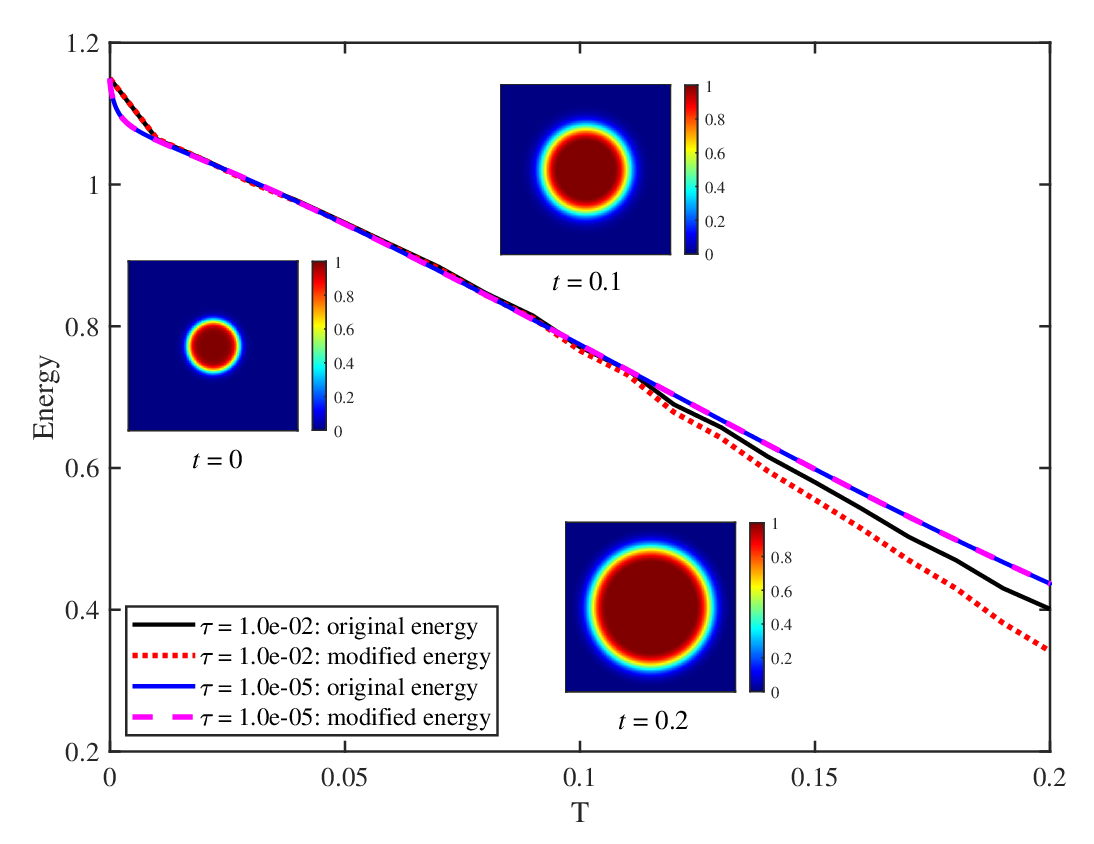}
		\caption{Total energy versus time without q.}
		\label{fig:energy_vs_time}
	\end{subfigure}
	\caption{Energy monitoring results.}
	\label{fig:mass_energy_monitoring}
\end{figure}

\begin{figure}[!htbp]
	\centering
	\includegraphics[width=0.4\textwidth]{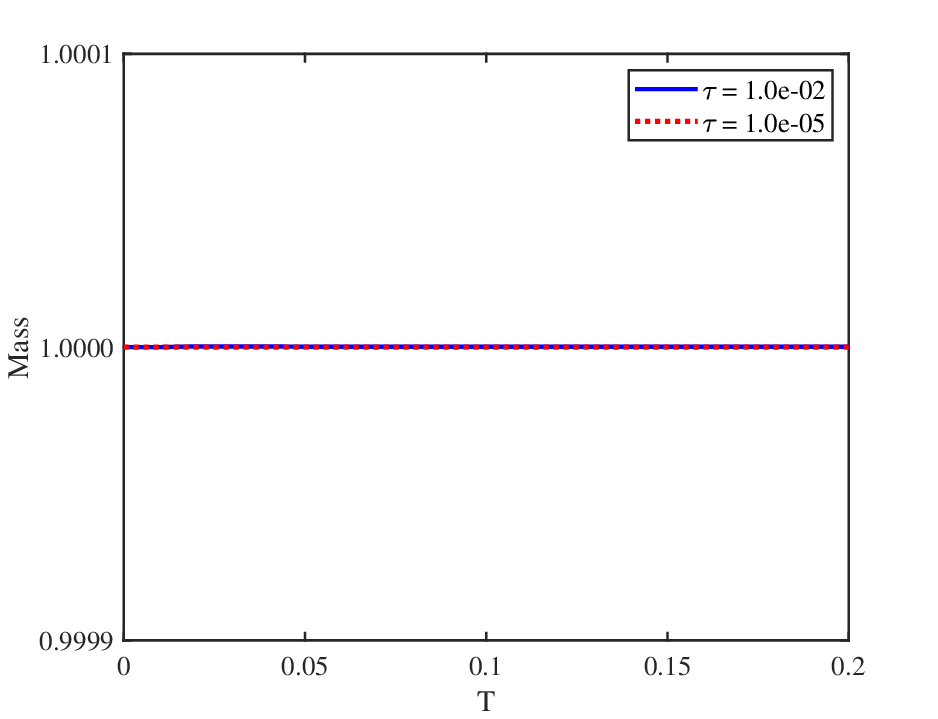}
	\caption{Total mass versus time.}
	\label{fig:mass_vs_time_single}
\end{figure}

\subsection*{Example 2. Two-Tumor Growth with Slower Nutrient Diffusion}
We consider a two-tumor-growth simulation on
$\Omega=[-1,1]^2$. The final time is $T=0.2$, and a $128\times128$ mesh
with $N=200$ time steps, corresponding to $\Delta t=10^{-3}$, is used.
The model and auxiliary parameters are
$
\epsilon=0.02,\quad
\chi_0=0.02,\quad
\delta=0.4,\quad
\kappa=0.25,\quad
P_0=20,\quad
\rho=1,\quad \alpha_\u=1,\quad
\lambda=4\delta\chi_0^2,\quad
C_1=1.
$
Thus the effective nutrient diffusion coefficient is $1/\delta=2.5$.
In the weak reaction term, the factor is
$P_n=P_0\delta\eta_n$ for $0\leq\eta_n\leq1$, and it is set to zero
otherwise. With the above parameters, this gives $P_n=8\eta_n$ within
the admissible phase interval. The solution snapshots are recorded at
$t=0$, $0.1$, and $0.2$.
The initial tumor and nutrient phases are defined as
\begin{equation}
	\eta_0(x,y)
	=1+\frac{1}{2}\sum_{\xi\in\{-0.3,0.3\}}
	\tanh\left(
	\frac{0.2-\sqrt{(x-\xi)^2+y^2}}{\sqrt{2}\,(0.02)}+1
	\right),
	\qquad
	c_0(x,y)=1-\eta_0(x,y).
	\label{eq:example2_two_tumor_initial}
\end{equation}
The initial velocity and pressure are chosen as
\begin{equation}
	\begin{aligned}
		\mathbf{u}_0(x,y)
		&=\left(
		-\frac{1}{4}\sin^2(x)\sin(2y),
		\frac{1}{4}\sin(2x)\sin^2(y)
		\right),\\
		p_0(x,y)&=\cos\left(\frac{x}{2}\right)
		\cos\left(\frac{y}{2}\right).
	\end{aligned}
	\label{eq:example2_two_tumor_initial_up}
\end{equation}

\begin{figure}[!htbp]
	\centering
	\includegraphics[width=\textwidth]{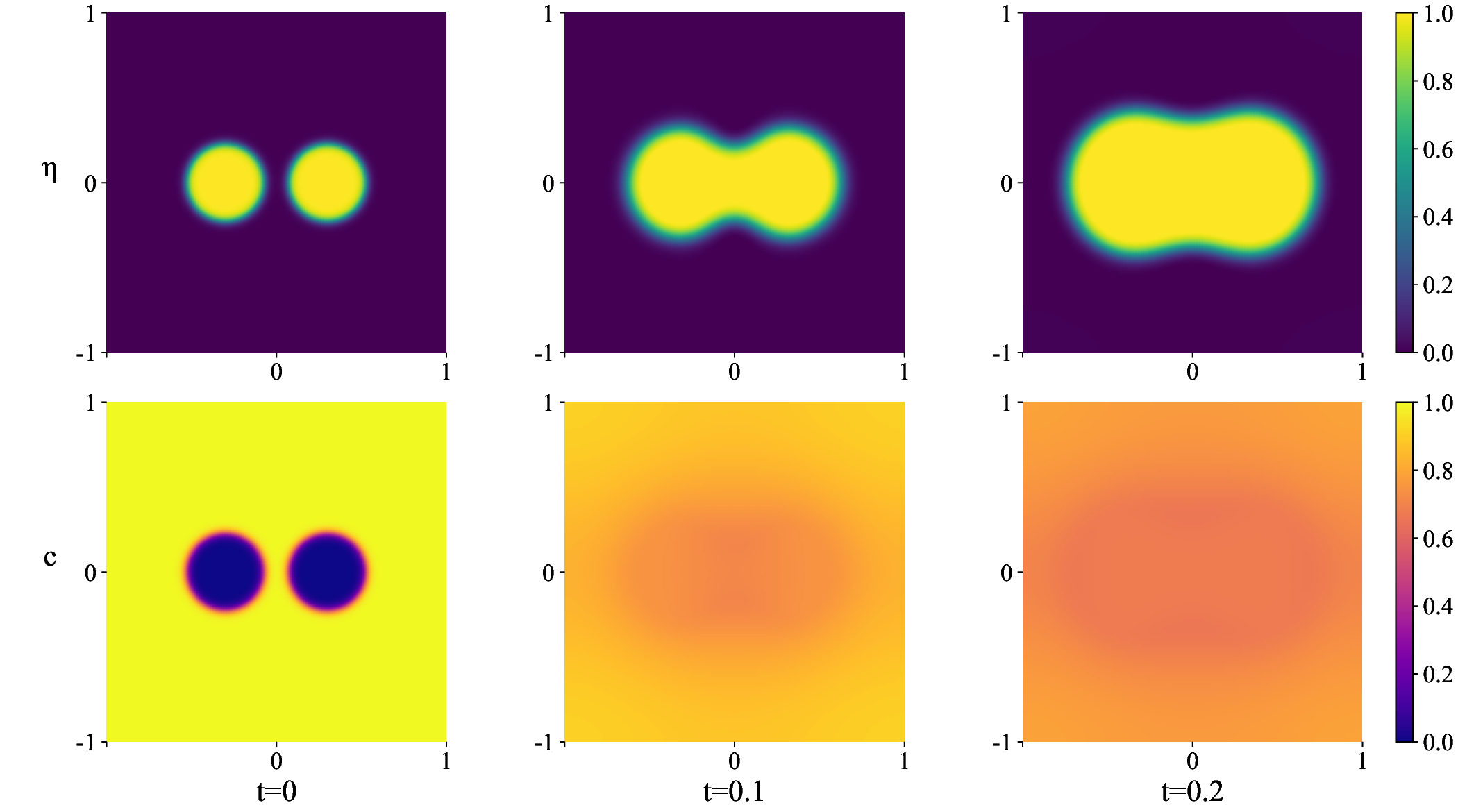}
	\caption{The evolution of the tumor phase $\eta$, and the nutrient concentration $c$ at $t=0,0.1,0.2$.}
	\label{fig:example2_multi_tumor_overview}
\end{figure}

\begin{figure}[!htbp]
	\centering
	\begin{subfigure}{0.47\textwidth}
		\centering
		\includegraphics[width=\textwidth]{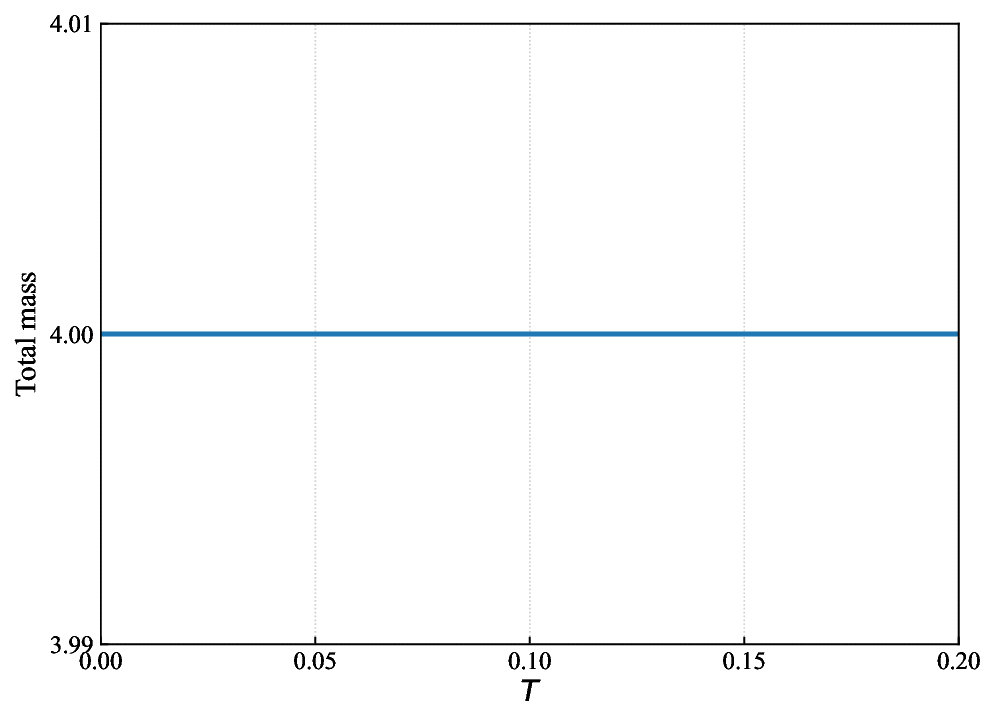}
		\caption{Total mass versus time.}
		\label{fig:example2_multi_tumor_mass}
	\end{subfigure}
	\hfill
	\begin{subfigure}{0.47\textwidth}
		\centering
		\includegraphics[width=\textwidth]{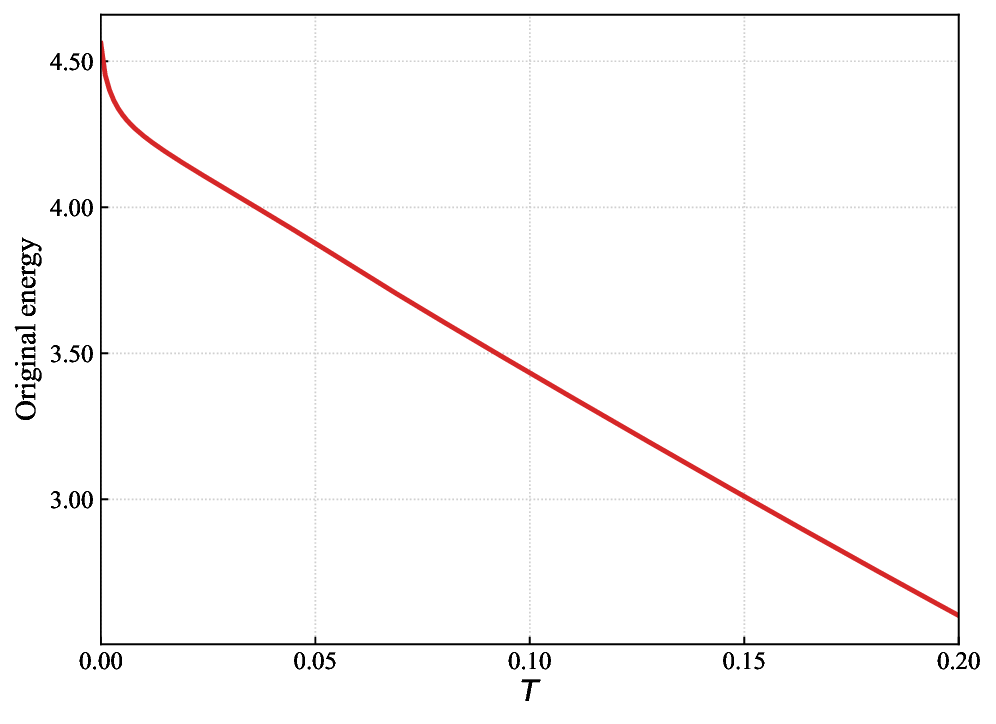}
		\caption{Original energy versus time.}
		\label{fig:example2_multi_tumor_energy}
	\end{subfigure}
	\caption{Mass and original-energy evolution for Example 2.}
	\label{fig:example2_multi_tumor_mass_energy}
\end{figure}

Figure~\ref{fig:example2_multi_tumor_overview} shows that the two
initial tumor regions merge into a connected tumor body while the
nutrient field is gradually depleted and smoothed. 
Figure~\ref{fig:example2_multi_tumor_mass_energy} shows that the total
mass remains essentially constant. The original energy decreases,
consistent with the
dissipative trend of the tumor-growth system.

\subsection*{Example 3. Chemotactic Tumor Growth with and without Fluid Coupling}
We next consider a chemotactic tumor-growth simulation on the square
domain $\Omega=[-1,1]^2$. The final time is $T=1$, and a
$128\times128$ mesh with
$\Delta t=5\times10^{-3}$. The model parameters are chosen as
\[
\epsilon=0.0283,\quad
\chi_0=0.112,\quad
\delta=10,\quad
\kappa=0.1,\quad
P_0=30,\quad
\lambda=0.04,\quad
C_1=1.
\] 
The scalar parameters and initial data described below are shared by
the cases without and with fluid coupling.

The initial tumor is an elliptic phase-field profile centered at the
origin defined as
\begin{equation}
 {\eta}_0(x,y)
=\frac{1}{2}\left[
1+\tanh\left(
\frac{1-\sqrt{(x/0.27)^2+(y/0.18)^2}}{\sqrt{2}\,(0.1)}+1
\right)\right].
\label{eq:example3_raw_eta_initial}
\end{equation}  
The initial nutrient field contains four smooth source regions. Denote
the source centers by
\[
\mathbf{x}_1=(-0.35,-0.35),\quad
\mathbf{x}_2=(0.35,0.35),\quad
\mathbf{x}_3=(0.35,-0.35),\quad
\mathbf{x}_4=(-0.35,0.35).
\]
For each source, set
\[
d_j(x,y)=\sqrt{(x-x_j)^2+(y-y_j)^2},
\qquad
r_c=0.2,\quad \varepsilon_c=0.05,
\]
and define
\begin{equation}
S_j(x,y)
=\frac{1}{2}\left[
1+\tanh\left(
\frac{r_c-d_j(x,y)}{\sqrt{2}\,\varepsilon_c}+1
\right)\right],
\qquad
S(x,y)=\sum_{j=1}^4 S_j(x,y).
\label{eq:example3_source_profile}
\end{equation}
The nutrient initial condition is then chosen as
\begin{equation}
c_0(x,y)=0.05+(1-0.05)S(x,y)^2.
\label{eq:example3_c_initial}
\end{equation}


The growth function is taken as a piecewise linear
function of the tumor phase,
\begin{equation}
P(\eta)=
\begin{cases}
P_0\eta, & 0\leq \eta \leq 1,\\
0, & \text{otherwise},
\end{cases}
\label{eq:example3_growth_function}
\end{equation}
and the reaction factor used in the weak form is
$P_n=\delta P(\eta_n)$. This choice allows tumor proliferation inside
the admissible phase interval while switching off the growth contribution
once the computed phase value leaves $[0,1]$.

\subsubsection*{Case I. Without Fluid Coupling}
In the first case, the velocity and pressure remain disabled with
$\mathbf{u}=\mathbf{0}$ and $p=0$ throughout the simulation.

\begin{figure}[!htbp]
\centering
\includegraphics[width=\textwidth]{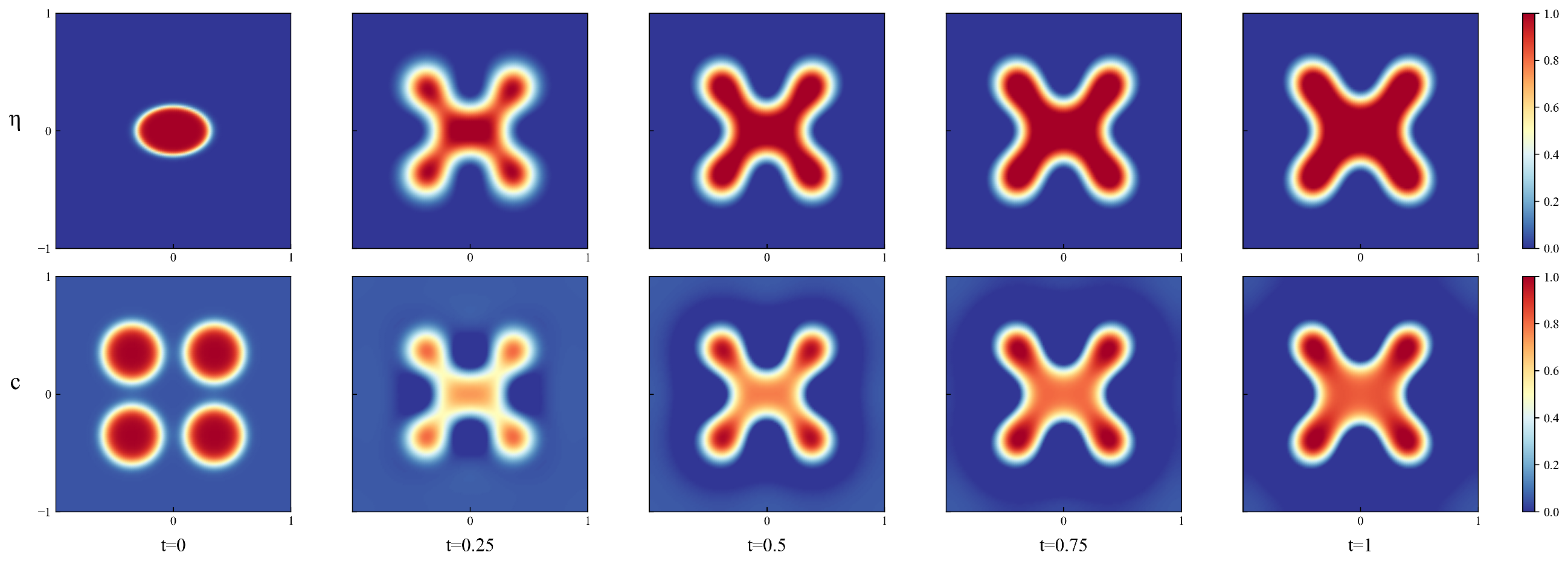}
\caption{Chemotactic tumor-growth evolution without fluid coupling.
The tumor phase $\eta$ and nutrient concentration $c$ are shown at
$t=0,0.25,0.5,0.75,1$.}
\label{fig:example3_no_fluid_overview}
\end{figure}

Figure~\ref{fig:example3_no_fluid_overview} shows that the initial
elliptic tumor is retained near the center while expanding toward the
four nutrient-rich regions. The chemotactic effect produces a four-lobed
tumor morphology, and the nutrient field is depleted and redistributed
along the same directions, which indicates that the growth is mainly
induced by the nutrient sources.

\begin{figure}[!htbp]
\centering
\includegraphics[width=0.4\textwidth]{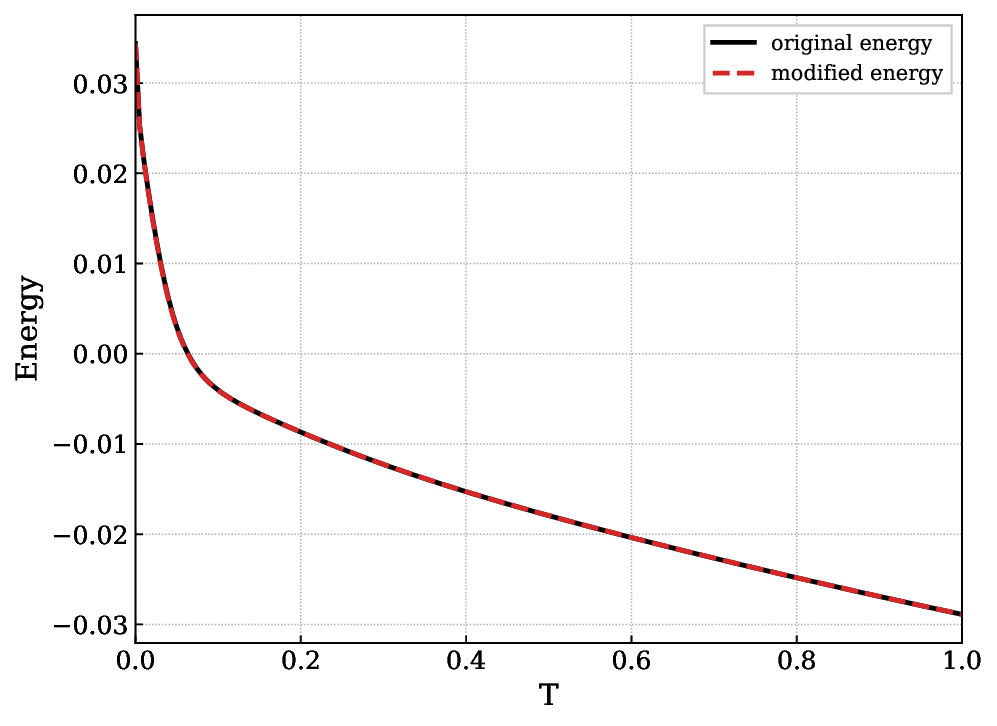}
\caption{Original and modified energies versus time without fluid
coupling.}
\label{fig:example3_no_fluid_energy}
\end{figure}

As shown in Figure~\ref{fig:example3_no_fluid_energy}, both recorded
energy curves exhibit the expected dissipative trend in the absence of
fluid coupling.

\subsubsection*{Case II. With Fluid Coupling}
In the second case, the full fluid-coupled system is used with
$\rho=1$ and $\alpha_\u=0.03$. The initial velocity is
\begin{equation}
\mathbf{u}_0(x,y)
=\left(
-25\sin^2(x)\sin(2y),
25\sin(2x)\sin^2(y)
\right),
\label{eq:example3_fluid_initial_u}
\end{equation}
and the initial pressure is obtained from
\begin{equation}
\widetilde{p}_0(x,y)
=\cos\left(\frac{x}{2}\right)\cos\left(\frac{y}{2}\right),
\qquad
p_0=\widetilde{p}_0
-\frac{1}{|\Omega|}\int_\Omega\widetilde{p}_0\,\mathrm{d}\mathbf{x}.
\label{eq:example3_fluid_initial_p}
\end{equation}

\begin{figure}[!htbp]
\centering
\includegraphics[width=\textwidth]{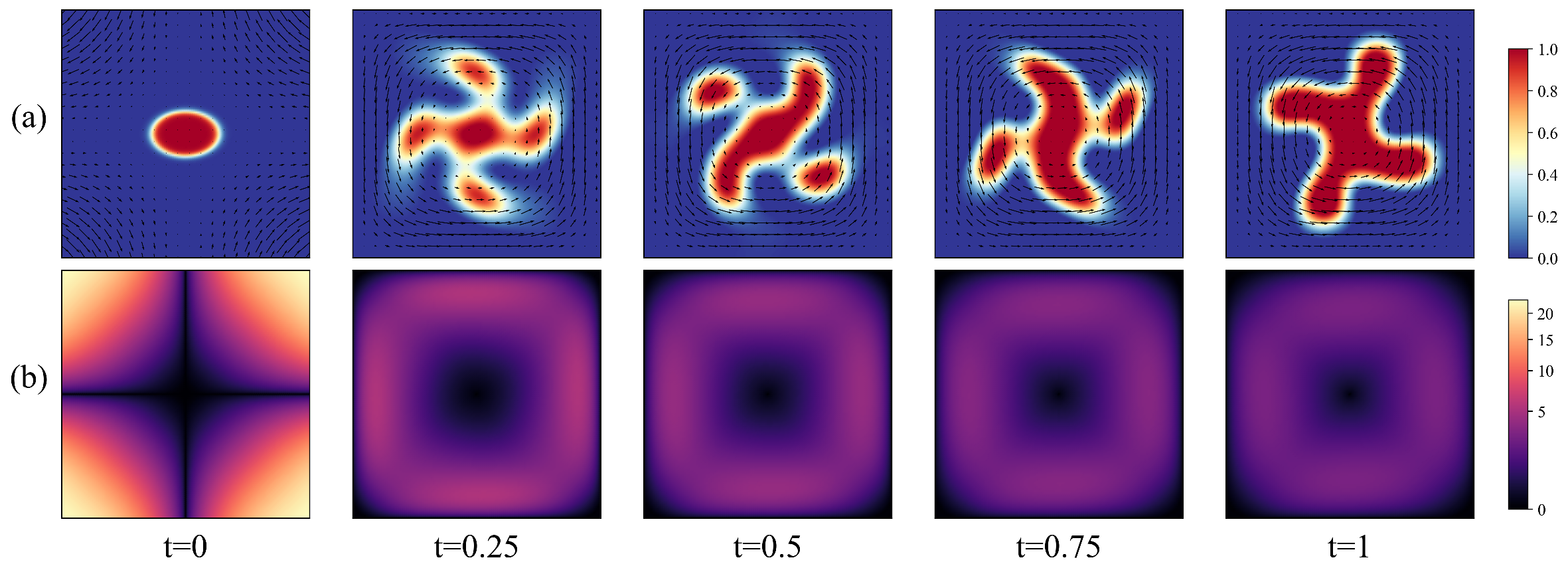}
\caption{Fluid-coupled evolution at
$t=0,0.25,0.5,0.75,1$: (a) tumor phase $\eta$ overlaid with velocity
vectors; (b) velocity magnitude $|\mathbf{u}|$. }
\label{fig:example3_with_fluid_overview}
\end{figure}

Figure~\ref{fig:example3_with_fluid_overview} shows that fluid advection
breaks the nearly symmetric growth pattern observed without fluid
coupling. The tumor branches are transported and rotated before merging
into an asymmetric connected structure. The initially strong flow
rapidly relaxes into a weaker rotating pattern while continuing to
transport the tumor interface.

\begin{figure}[!htbp]
\centering
\includegraphics[width=0.4\textwidth]{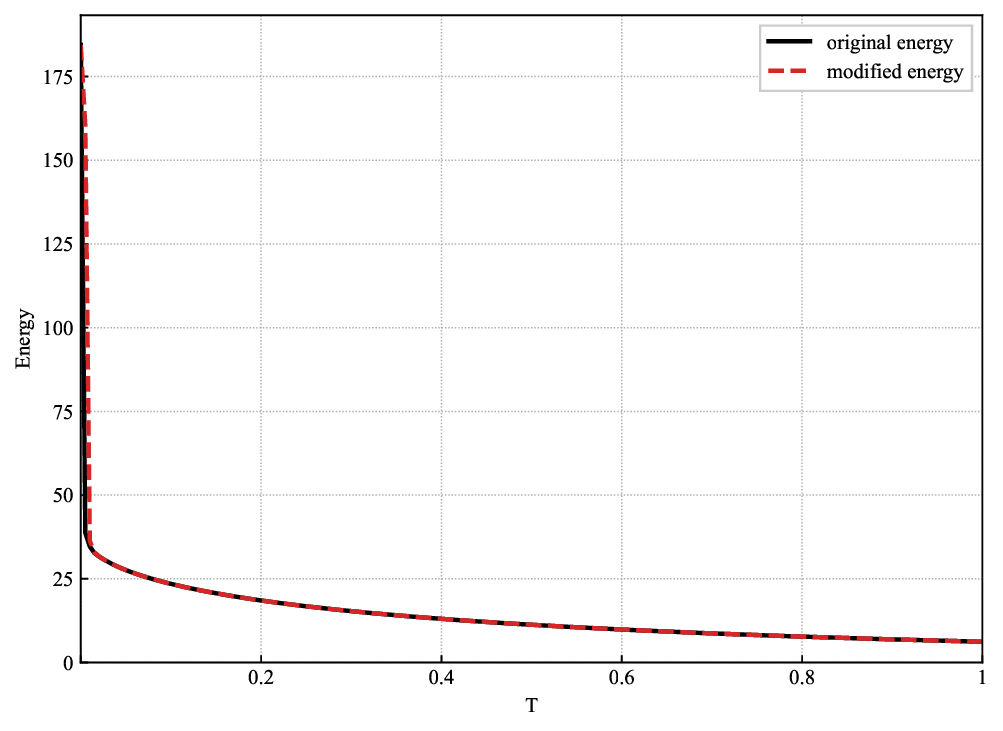}
\caption{Original and modified energies versus time with fluid
coupling.}
\label{fig:example3_with_fluid_energy}
\end{figure}

Figure~\ref{fig:example3_with_fluid_energy} shows that the original and
modified energies decrease throughout the fluid-coupled simulation.
The larger initial energy is due to the kinetic contribution of the
prescribed initial flow.

\begin{figure}[!htbp]
\centering
\includegraphics[width=0.4\textwidth]{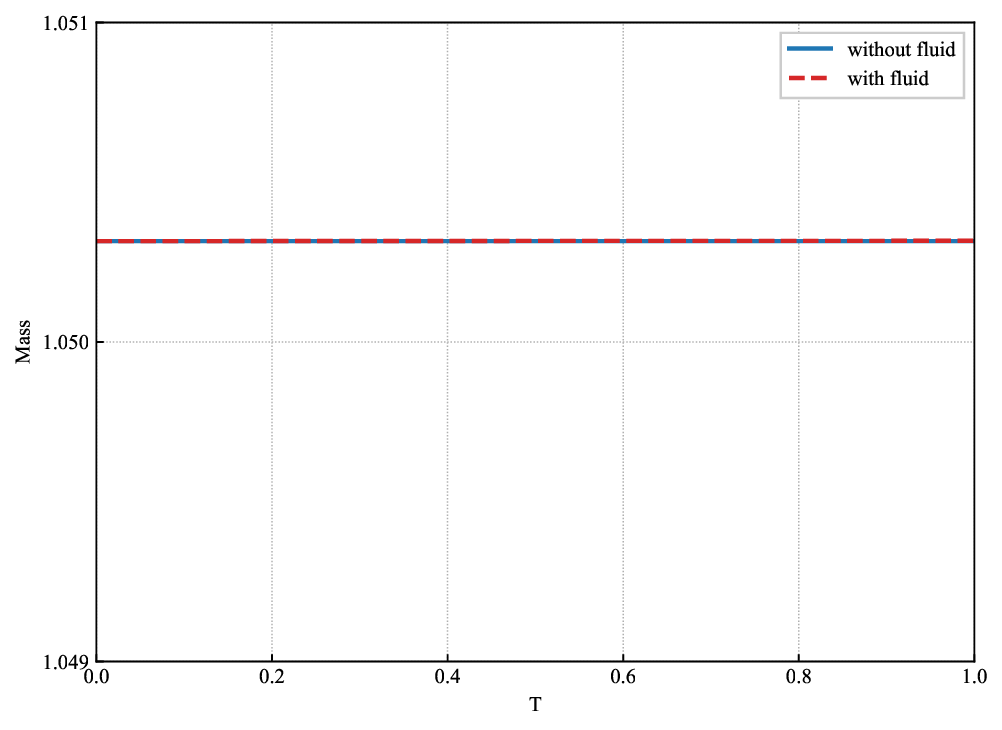}
\caption{Total mass versus time without and with fluid coupling.}
\label{fig:example3_mass_comparison}
\end{figure}

Finally, Figure~\ref{fig:example3_mass_comparison} compares the total
mass in the two cases. The mass remains essentially constant in both
simulations, while the energy results in
Figures~\ref{fig:example3_no_fluid_energy} and
\ref{fig:example3_with_fluid_energy} demonstrate the dissipative
behavior of the scheme in both regimes.

\section{Conclusion}
In this work, we developed a thermodynamically consistent phase-field model for tumor growth by coupling the Cahn–Hilliard and incompressible Navier–Stokes equations within the energetic variational approach, explicitly accounting for fluid dynamics and nutrient transport. To overcome the numerical challenges posed by strong nonlinearity, multiphysics coupling, and high-order operators, we proposed a structure-preserving numerical scheme based on the multiple scalar auxiliary variable framework combined with a projection–correction strategy for incompressibility. The resulting method is mass-conservative and rigorously inherits a discrete energy dissipation law, yielding unconditional energy stability. Moreover, under suitable regularity assumptions, optimal-order error estimates were established for the phase-field variable, nutrient concentration, and fluid velocity, and numerical experiments confirmed the theoretical convergence rates as well as the accuracy and robustness of the proposed approach. The present framework provides a solid foundation for future extensions to higher-order time discretizations, adaptive spatial refinement, and more complex biophysical tumor models.


\begin{thebibliography}{99}


\bibitem{ferlay2018}
J. Ferlay, M. Colombet, I. Soerjomataram, et al.,
\newblock Cancer incidence and mortality patterns in Europe:
estimates for 40 countries and 25 major cancers in 2018,
\newblock \emph{Eur. J. Cancer}, 103 (2018), pp.~356--387.

\bibitem{enderling2014}
H. Enderling and M. A. J. Chaplain,
\newblock Mathematical modeling of tumor growth and treatment,
\newblock \emph{Curr. Pharm. Des.}, 20 (30) (2014),
pp.~4934--4940.

\bibitem{roose2007}
T. Roose, S. J. Chapman, and P. K. Maini,
\newblock Mathematical models of avascular tumor growth,
\newblock \emph{SIAM Rev.}, 49 (2007), pp.~179--208.

\bibitem{xu2020}
J. Xu, G. Vilanova, and H. Gomez,
\newblock Phase-field model of vascular tumor growth:
three-dimensional geometry of the vascular network and integration
with imaging data,
\newblock \emph{Comput. Methods Appl. Mech. Engrg.},
359 (2020), 112648.

\bibitem{ebenbeck2019}
M. Ebenbeck and H. Garcke,
\newblock Analysis of a Cahn--Hilliard--Brinkman model for tumour
growth with chemotaxis,
\newblock \emph{J. Differential Equations},
266 (2019), pp.~5998--6036.

\bibitem{jiang2015}
J. Jiang, H. Wu, and S. Zheng,
\newblock Well-posedness and long-time behavior of a non-autonomous
Cahn--Hilliard--Darcy system with mass source modeling tumor growth,
\newblock \emph{J. Differential Equations},
259 (2015), pp.~3032--3077.

\bibitem{garcke2018}
H. Garcke, K. F. Lam, R. N\"urnberg, and E. Sitka,
\newblock A multiphase Cahn--Hilliard--Darcy model for tumour growth
with necrosis,
\newblock \emph{Math. Models Methods Appl. Sci.},
28 (2018), pp.~525--577.

\bibitem{oden2010}
J. T. Oden, A. Hawkins, and S. Prudhomme,
\newblock General diffuse-interface theories and an approach to
predictive tumor growth modeling,
\newblock \emph{Math. Models Methods Appl. Sci.},
20 (2010), pp.~477--517.

\bibitem{hilhorst2015}
D. Hilhorst, J. Kampmann, T. N. Nguyen, and K. G. van der Zee,
\newblock Formal asymptotic limit of a diffuse-interface tumor-growth
model,
\newblock \emph{Math. Models Methods Appl. Sci.},
25 (2015), pp.~1011--1043.


\bibitem{garcke2020}
H. Garcke and S. Yayla,
\newblock Long-time dynamics for a Cahn--Hilliard tumor growth model
with chemotaxis,
\newblock \emph{Z. Angew. Math. Phys.},
71 (2020), Article~123, 32~pp.

\bibitem{riva2025}
F. Riva and E. Rocca,
\newblock A rigorous approach to the sharp interface limit for
phase-field models of tumor growth,
\newblock \emph{SIAM J. Math. Anal.},
57 (1) (2025), pp.~65--94.

\bibitem{colli2015}
P. Colli, G. Gilardi, and D. Hilhorst,
\newblock On a Cahn--Hilliard type phase field system related to
tumor growth,
\newblock \emph{Discrete Contin. Dyn. Syst.},
35 (6) (2015), pp.~2423--2442.

\bibitem{colli20152}
P. Colli, G. Gilardi, E. Rocca, and J. Sprekels,
\newblock Vanishing viscosities and error estimate for a
Cahn--Hilliard type phase field system related to tumor growth,
\newblock \emph{Nonlinear Anal. Real World Appl.},
26 (2015), pp.~93--108.

\bibitem{colli2017}
P. Colli, G. Gilardi, E. Rocca, and J. Sprekels,
\newblock Asymptotic analyses and error estimates for a
Cahn--Hilliard type phase field system modelling tumor growth,
\newblock \emph{Discrete Contin. Dyn. Syst. Ser. S},
10 (1) (2017), pp.~37--54.

\bibitem{frigeri2015}
S. Frigeri, M. Grasselli, and E. Rocca,
\newblock On a diffuse interface model of tumour growth,
\newblock \emph{European J. Appl. Math.},
26 (2) (2015), pp.~215--243.


\bibitem{shen2023}
X. Shen, L. Wu, J. Wen, and J. Zhang,
\newblock SAV Fourier-spectral method for diffuse-interface
tumor-growth model,
\newblock \emph{Comput. Math. Appl.},
140 (2023), pp.~250--259.

\bibitem{zou2022}
G. Zou, B. Wang, and X. Yang,
\newblock A fully-decoupled discontinuous Galerkin approximation and
optimal error estimate of the
Cahn--Hilliard--Brinkman--Ohta--Kawasaki tumor growth model,
\newblock \emph{ESAIM Math. Model. Numer. Anal.},
56 (2022), pp.~2141--2180.

\bibitem{wu2014}
X. Wu, G. J. van Zwieten, and K. G. van der Zee,
\newblock Stabilized second-order convex splitting schemes for
Cahn--Hilliard models with application to diffuse-interface
tumor-growth models,
\newblock \emph{Int. J. Numer. Methods Biomed. Eng.},
30 (2014), pp.~180--203.

\bibitem{hawkins2012}
A. Hawkins-Daarud, K. G. van der Zee, and J. T. Oden,
\newblock Numerical simulation of a thermodynamically consistent
four-species tumor growth model,
\newblock \emph{Int. J. Numer. Methods Biomed. Eng.},
28 (1) (2012), pp.~3--24.

\bibitem{lin2025}
Z. Wang, P. Lin, and J. Yang,
\newblock Stability and error analysis of structure-preserving
schemes for a diffuse-interface tumor growth model,
\newblock \emph{SIAM J. Sci. Comput.},
47 (1) (2025), pp.~B59--B86.


\bibitem{ribba2006}
B. Ribba, O. Saut, T. Colin, D. Bresch, E. Grenier, and J. P. Boissel,
\newblock A multiscale mathematical model of avascular tumor growth
to investigate the therapeutic benefit of anti-invasive agents,
\newblock \emph{J. Theoret. Biol.},
243 (2006), pp.~532--541.

\bibitem{moha2019}
V. Mohammadi and M. Dehghan,
\newblock Simulation of the phase field Cahn--Hilliard and tumor
growth models via a numerical scheme: element-free Galerkin method,
\newblock \emph{Comput. Methods Appl. Mech. Engrg.},
345 (2019), pp.~919--950.

\bibitem{netti2000}
P. A. Netti, D. A. Berk, M. A. Swartz, A. J. Grodzinsky, and
R. K. Jain,
\newblock Role of extracellular matrix assembly in interstitial
transport in solid tumors,
\newblock \emph{Cancer Res.},
60 (2000), pp.~2497--2503.


\bibitem{evans1949}
L. C. Evans,
\newblock \emph{Partial Differential Equations},
\newblock 2nd ed., Graduate Studies in Mathematics, Vol.~19,
American Mathematical Society, Providence, RI, 2010.

\bibitem{temam2024}
R. Temam,
\newblock \emph{Navier--Stokes Equations:
Theory and Numerical Analysis},
\newblock AMS Chelsea Publishing, Vol.~343,
American Mathematical Society, Providence, RI, 2024.


\bibitem{lin2022}
L. Shen, Z. Xu, P. Lin, H. Huang, and S. Xu,
\newblock An energy-stable \(C^0\) finite element scheme for a
phase-field model of vesicle motion and deformation,
\newblock \emph{SIAM J. Sci. Comput.},
44 (1) (2022), pp.~B122--B145.

%


\bibitem{shenjie2018}
J. Shen and J. Xu,
\newblock Convergence and error analysis for the scalar auxiliary
variable (SAV) schemes to gradient flows,
\newblock \emph{SIAM J. Numer. Anal.},
56 (5) (2018), pp.~2895--2912.

\bibitem{shenjie2021}
X. Li, J. Shen, and Z. Liu,
\newblock New SAV-pressure correction methods for the
Navier--Stokes equations: stability and error analysis,
\newblock \emph{Math. Comp.},
91 (333) (2022), pp.~141--167.

\bibitem{shenjie1992}
J. Shen,
\newblock On error estimates of projection methods for
Navier--Stokes equations: first-order schemes,
\newblock \emph{SIAM J. Numer. Anal.},
29 (1992), pp.~57--77.

\bibitem{cheng2018}
Q. Cheng and J. Shen,
\newblock Multiple scalar auxiliary variable (MSAV) approach and its
application to the phase-field vesicle membrane model,
\newblock \emph{SIAM J. Sci. Comput.},
40 (6) (2018), pp.~A3982--A4006.

\bibitem{yang2017}
X. Yang and L. Ju,
\newblock Linear and unconditionally energy-stable schemes for the
binary fluid--surfactant phase-field model,
\newblock \emph{Comput. Methods Appl. Mech. Engrg.},
318 (2017), pp.~1005--1029.

\bibitem{lin2011}
J. Hua, P. Lin, C. Liu, and Q. Wang,
\newblock Energy law preserving \(C^0\) finite element schemes for
phase-field models in two-phase flow computations,
\newblock \emph{J. Comput. Phys.},
230 (19) (2011), pp.~7115--7131.

\bibitem{lin2014}
Z. Guo, P. Lin, and J. S. Lowengrub,
\newblock A numerical method for the quasi-incompressible
Cahn--Hilliard--Navier--Stokes equations for variable-density flows
with a discrete energy law,
\newblock \emph{J. Comput. Phys.},
276 (2014), pp.~486--507.

\bibitem{shenjie2010}
J. Shen and X. Yang,
\newblock Numerical approximations of Allen--Cahn and
Cahn--Hilliard equations,
\newblock \emph{Discrete Contin. Dyn. Syst.},
28 (2010), pp.~1669--1691.

\bibitem{heywood1990}
J. G. Heywood and R. Rannacher,
\newblock Finite-element approximation of the nonstationary
Navier--Stokes problem. IV. Error analysis for second-order time
discretization,
\newblock \emph{SIAM J. Numer. Anal.},
27 (2) (1990), pp.~353--384.

\bibitem{shenjie2022}
X. Li and J. Shen,
\newblock On fully decoupled MSAV schemes for the
Cahn--Hilliard--Navier--Stokes model of two-phase incompressible
flows,
\newblock \emph{Math. Models Methods Appl. Sci.},
32 (3) (2022), pp.~457--495.

\end{thebibliography}
\end{document}